\newtheorem{theorem}{Theorem}[section]
\newtheorem{lemma}[theorem]{Lemma}
\newtheorem{corollary}[theorem]{Corollary}
\newtheorem{proposition}[theorem]{Proposition}
\numberwithin{equation}{section}
\def \N {{\mathbb N}}
\def \Z {{\mathbb Z}}
\def \le{{\,\leqslant\,}}
\def \ge{{\,\geqslant\,}}
\def\pmod #1{({\rm mod}\ #1)}
\newcommand{\rank}{{\rm{rank}}}
\newcommand{\rankoff}{{\rm{rank}_{\rm{off}}}}
\begin{document}
\title[Quadratic forms and dense sets of primes]{Translation invariant quadratic forms and dense sets of primes}
\author{Lilu Zhao}
\address{School of Mathematics \\ Shandong University \\ Jinan  250100 \\ China}
\email{zhaolilu@sdu.edu.cn}

\begin{abstract}Let $f(x_1,\ldots,x_s)$ be a translation invariant indefinite quadratic
form of integer coefficients with $s\ge 10$. Let $\mathcal{A}\subseteq \mathcal{P}\cap \{1,2,\ldots,X\}$. Let $X$ be sufficiently large. Subject to a rank condition, we prove that there exist distinct primes $p_1,\ldots,p_s\in \mathcal{A}$ such that $f(p_1,\ldots,p_s)=0$ as soon as $|\mathcal{A}|\ge  \frac{X}{\log X} (\log\log X)^{-\frac{1}{80}}.$
\end{abstract}

\maketitle


{\let\thefootnote\relax\footnotetext{2020 Mathematics Subject
Classification: 11P55 (11D09, 11L20, 11N36)}}

{\let\thefootnote\relax\footnotetext{Keywords:
circle method, quadratic form, sieve method, restriction estimate}}

{\let\thefootnote\relax\footnotetext{This work is supported by the NSFC grant 11922113.}}

\section{Introduction}

Roth's theorem \cite{R1} on arithmetic progressions of length three states that
\begin{align}\label{r3}\rho_3(X)\ll (\log\log X)^{-1},\end{align}
where for $k\ge 3$, we use $\rho_k(X)$ to denote the maximal density of subsets $\mathcal{A}$ in $\{1,2,\ldots,X\}$ satisfying that $\mathcal{A}$ contains no nontrivial arithmetic progressions of length $k$. Szemer\'{e}di \cite{Szk} proved $\rho_k(X)=o(1)$ for all $k$, and therefore confirmed a conjecture of Erd\"{o}s and Tur\'{a}n \cite{ET}.  In 2005 Green \cite{Green} established a remarkable analogue of Roth's theorem in primes, which states that any set containing a positive proportion of the primes contains a nontrivial $3$-term arithmetic progression.

In recent years, there has been much work on extensions of Roth's theorem to nonlinear equations. Browning and Prendiville \cite{BP} studied the diagonal quadratic equation
\begin{align}\label{BP}c_1x_1^2+c_2x_2^2+\cdots+c_sx_s^2=0\end{align}
subject to the condition $c_1+\cdots +c_{s}=0$, where the variables $x_1,\ldots,x_s$  are restricted
in $\mathcal{A}\subseteq \{1,2,\ldots,X\}$. It was proved in \cite{BP} that if $s\ge 5$ and the equation \eqref{BP} has only trivial solutions then
\begin{align} \label{BPdensity}|\mathcal{A}|/X \ll (\log \log \log X)^{-\frac{s}{2}+1+\varepsilon}.\end{align}
Chow \cite{Chow} considered the diagonal equation of degree $d$ over a subset of prime numbers, that is
\begin{align}\label{Chow}c_1x_1^d+c_2x_2^d+\cdots+c_sx_s^d=0,\end{align}
where $x_1,\ldots,x_s$  are restricted
in $\mathcal{A}\subseteq \mathcal{P}\cap\{1,2,\ldots,X\}$. Throughout this paper, we use $\mathcal{P}$ to denote the set of all prime numbers. It was proved in \cite{Chow}, if $s\ge d^2+1$ and the equation \eqref{Chow} has only trivial solutions, then
\begin{align} \label{Chowdensity}\frac{|\mathcal{A}|}{X/\log X} \ll (\log \log \log \log X)^{-\frac{s-2}{d}+\varepsilon}.\end{align}
One may also refer to recent impressive works \cite{CLP,MS} on diagonal equations of higher degree. Chow, Lindqvist and Prendiville \cite{CLP} considered Rado's type theorem over squares and higher powers. Matom\"aki and Shao \cite{MS} investigated the Waring-Goldbach problem in short intervals (see also \cite{MMS} for the linear case).


We study the quadratic equation
 \begin{align}\label{quadequation}f(x_1,\ldots,x_s)=0,\end{align}
where $f(x_1,\ldots,x_s)=\mathbf{x}M\mathbf{x}^{T}$ is a quadratic form with integral coefficients throughout. In other words,
\begin{align}\label{defM} M=\begin{pmatrix}a_{1,1}  & \cdots & a_{1,s}
\\ \vdots & \cdots & \vdots
\\ a_{s,1} & \cdots  & a_{s,s} \end{pmatrix}\end{align}
with $a_{i,j}=a_{j,i}\in\Z$ for all $1\le i< j\le s$. Liu \cite{Liu} initiated the investigation of prime solutions to \eqref{quadequation} when $s\ge 10$. Subject to a rank condition, Liu obtained the asymptotic formula for
\begin{align}\label{nuLiu}
\nu_f=\sum_{\substack{1<x_1,\ldots,x_s\le X\\ f(x_1,\ldots,x_s)=0  }}\Lambda(x_1)\cdots \Lambda(x_s),\end{align}
 where $\Lambda(\cdot)$ is the Von Mangoldt function. Motivated by the work of Liu \cite{Liu}, Keil \cite{Keil} introduced the off-diagonal rank of $M$
\begin{align}\label{defoff}\rankoff(M)
=\max\{r:\ r\in R\},\end{align}where
\begin{align*}R=\Big\{\rank(B):\ B=(a_{i_k,j_l})_{1\le k,l\le r}\
\textrm{ with }\
\{i_1,\ldots,i_r\}\cap\{j_1,\cdots,j_r\}=\emptyset\Big\}.\end{align*}
In other words, $\rankoff(M)$ is the maximal rank of a submatrix
in $M$, which does not contain any diagonal entries.
For a quadratic form $f(x_1,\ldots,x_s)=\mathbf{x}M\mathbf{x}^T$, we define the off-diagonal rank of $f$
\begin{align}\label{defoff2}\rankoff(f)=\rankoff(M).\end{align}
Essentially, Liu \cite{Liu} obtained the asymptotic formula for $\nu_f$ in \eqref{nuLiu} by assuming $\rankoff(f)\ge 5$.

Keil \cite{Keil,Keilrefine} considered the equation \eqref{quadequation} over dense sets of integers when $f$ is translation invariant, i.e. $\mathbf{1}M=\mathbf{0}$. We use $\mathbf{1}$ and $\mathbf{0}$ to denote $s$-dimensional vectors $(1,\ldots,1)\in \Z^s$ and $(0,\ldots,0)\in \Z^s$, respectively. Keil \cite{Keil} (see Theorem 2.2 in \cite{Keil}) proved that if $f$ is translation invariant with $\rankoff(f)\ge 5$ and the equation \eqref{quadequation} has only trivial solutions with variables restricted
in $\mathcal{A}\subseteq \{1,2,\ldots,X\}$ , then
\begin{align} \label{Keildensity}|\mathcal{A}|/X \ll (\log\log X)^{-c}\end{align}
for some absolute constant $c>0$. The density estimate \eqref{Keildensity} was improved in \cite{Keilrefine} (see Theorem 2.2 in \cite{Keilrefine}) to
\begin{align*}|\mathcal{A}|/X \ll (\log X)^{-c}.\end{align*}
Keil's above results were refined by Zhao \cite{Zhao} to a wide class of translation invariant quadratic forms in $9$ variables.

 The goal of this paper is to find nontrivial solutions to the translation invariant equation \eqref{quadequation}, where the variables are restricted in a subset of primes. The main result is the following.

\begin{theorem}\label{theorem1}Let $f(x_1,\ldots,x_s)$ be a translation invariant indefinite quadratic
form with $s\ge 10$. Suppose that $\rankoff(f)\ge 5$. Let $\mathcal{A}\subseteq \mathcal{P}\cap \{1,2,\ldots,X\}$. Suppose that there are no pairwise distinct primes  $p_1,\ldots,p_s\in \mathcal{A}$ such that $f(p_1,\ldots,p_s)=0$. Then we have
\begin{align}\label{thmdensity}\frac{|\mathcal{A}|}{X/\log X}\ll_f  (\log\log X)^{-\frac{1}{80}}.\end{align}
\end{theorem}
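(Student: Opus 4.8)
The plan is a transference argument in the style of Green and Green--Tao, fed by Liu's treatment \cite{Liu} of the quadratic Hardy--Littlewood problem: one transfers the equation from $\mathcal A$ to a bounded function on a dense subset of the integers, where a density-increment argument of the type used by Keil \cite{Keil} applies. Write $\delta=|\mathcal A|\log X/X$; I would argue by contradiction, supposing $\delta\gg_f(\log\log X)^{-1/80}$ and producing pairwise distinct primes $p_1,\dots,p_s\in\mathcal A$ with $f(p_1,\dots,p_s)=0$. First comes the $W$-trick: fix a slowly growing parameter $w=w(X)\to\infty$ (the optimal choice being a suitable power of $\log\log X$), put $W=\prod_{p\le w}p$, so that $\log W=O(w)$ and hence $\log\log(X/W)\sim\log\log X$. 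Translation invariance gives $f(W\mathbf n+b\mathbf 1)=W^2 f(\mathbf n)$ for every integer $b$, so it suffices to find pairwise distinct $n_1,\dots,n_s\le X/W$ with $f(\mathbf n)=0$ and $Wn_i+b\in\mathcal A$ for all $i$. Setting $a(n)=\tfrac{\phi(W)}{W}\log(Wn+b)\,\mathbf 1_{\mathcal A}(Wn+b)$ and choosing, by pigeonhole, a residue $b$ coprime to $W$ with $\sum_{n\le X/W}a(n)\gg\delta\,X/W$, the problem is reduced to a statement about $a$, which is majorised by a rebalanced von Mangoldt weight.

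Next I would bring in the pseudorandomness input. Let $\nu$ be a sieve majorant of Goldston--Y\i ld\i r\i m type with $a\le C\nu$ and $\sum_{n\le X/W}\nu(n)\sim X/W$, and record a restriction estimate $\big\|\sum_{n\le X/W}a(n)e(n\theta)\big\|_{L^q(\T)}\ll(X/W)^{1-1/q}$ valid in a range $q\ge q_0$; for even integers $q$ this follows from the corresponding bound for $\nu$, since $0\le a\le C\nu$. I would then take the dense-model decomposition $a=a_\sharp+a_\flat$, where $a_\sharp$ is a bounded approximant ($0\le a_\sharp\le C$ and $\sum_{n\le X/W}a_\sharp(n)\gg\delta\,X/W$) obtained by truncating $\widehat a$ to its large frequencies, while $a_\flat=a-a_\sharp$ has $\|\widehat{a_\flat}\|_{L^\infty(\T)}$ tiny and still $\|\widehat{a_\flat}\|_{L^q(\T)}\ll(X/W)^{1-1/q}$.

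The heart of the argument is a generalised von Neumann estimate followed by a density increment. I would insert $a=a_\sharp+a_\flat$ into the weighted count $N=\sum_{\mathbf n\le X/W,\,f(\mathbf n)=0}a(n_1)\cdots a(n_s)$, expand into $2^s$ terms, and show that every term containing a factor $a_\flat$ is $o\big(\delta^s\mathfrak S(X/W)^{s-2}\big)$, where $\mathfrak S>0$ is the product of the singular series and the singular integral (positive because $f$ is indefinite and, as $\rankoff(f)\ge5$ forces $\rank M\ge5$, the local solubility conditions hold, the small primes having been cleared by the $W$-trick). This I would do by rerunning Liu's circle-method dissection \cite{Liu} --- admissible precisely because $s\ge10$ and $\rankoff(f)\ge5$ --- with $\Lambda$ replaced by $a_\sharp$, $a_\flat$, or $\nu$: on the major arcs one uses that $a_\sharp$ captures the relevant low-frequency behaviour of $a$, and on the minor arcs one couples the $L^q$ restriction bound with Weyl-type cancellation in $\sum_{\mathbf n}e(\theta f(\mathbf n))$, which $\rankoff(f)\ge5$ makes strong enough to exploit and whose losses (negative powers of $w$ among them) the surplus variables absorb. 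It then remains to bound $\sum_{f(\mathbf n)=0}a_\sharp(n_1)\cdots a_\sharp(n_s)$ from below: here I would run a density-increment argument of Keil's type \cite{Keil}, now for the bounded function $a_\sharp$ (equivalently, its dense model), showing that as soon as the mean of $a_\sharp$ exceeds a constant multiple of $(\log\log(X/W))^{-1/80}\sim(\log\log X)^{-1/80}$ there are $\gg\delta^s\mathfrak S(X/W)^{s-2}$ solutions of $f(\mathbf n)=0$ --- the exponent $\tfrac1{80}$ being what survives once the growth rate of $w$ is optimised against the accumulated errors. Hence $N\gg\delta^s\mathfrak S(X/W)^{s-2}>0$. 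Finally, the degenerate solutions (those with $n_i=n_j$ for some $i\ne j$) contribute only $o\big(\delta^s(X/W)^{s-2}\big)$: since $\rank M\ge5$ forces $f$ restricted to any $\{n_i=n_j\}$ to have rank $\ge3$, their count is $O\big((X/W)^{s-3+\varepsilon}\big)$, which neither the weights ($\ll\log X$) nor the factor $\delta^{-s}\ll(\log X)^{s}$ can spoil. Thus $N$ is supported, with positive weight, on tuples with pairwise distinct $n_i$, which yields the required primes and contradicts the hypothesis; therefore $\delta\ll_f(\log\log X)^{-1/80}$, proving \eqref{thmdensity}.

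The step I expect to be the main obstacle is the generalised von Neumann estimate for a genuinely non-diagonal quadratic form: one has to extract enough minor-arc cancellation from the exponential sums $\sum_{\mathbf n}e(\theta f(\mathbf n))$ twisted by the balanced function $a_\flat$, using only $\rankoff(f)\ge5$ and $s\ge10$, and then play this off --- together with all the other error terms --- against the main term while keeping the admissible density as large as $(\log\log X)^{-1/80}$. In particular the restriction estimate must hold in a range of exponents $q$ compatible with the Weyl-type decay provable for $f$, and it is exactly this compatibility that forces the hypotheses $s\ge10$ and $\rankoff(f)\ge5$.
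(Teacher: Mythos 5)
The architecture you describe---primorial modulus $W=\prod_{p\le w}p$, a pseudorandom sieve majorant, a dense-model decomposition $a=a_\sharp+a_\flat$, a generalised von Neumann estimate, and then a Keil-type increment on the bounded model---is exactly the Browning--Prendiville/Chow template, and it is precisely the route this paper is constructed to avoid, because it does not reach the stated exponent. Two quantitative obstructions. First, $W\approx e^{w}$ forces $w\ll\log\log X$ if $W$ is to stay below any fixed power of $\log X$, so the transference errors (powers of $w^{-1}$ coming from the tail of the singular series over primes exceeding $w$, and from the quality of the dense-model approximation) and, above all, the cost of closing the density-increment iteration (each pass refines the progression or enlarges the modulus) are exactly where \cite{BP} and \cite{Chow} drop to triple and quadruple logarithms. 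Your sentence that the exponent $\tfrac1{80}$ ``is what survives once the growth rate of $w$ is optimised against the accumulated errors'' is therefore not a step of a proof but a restatement of the theorem: nothing in the proposal shows the optimisation lands at $(\log\log X)^{-1/80}$ rather than at $(\log\log\log X)^{-c}$. Second, you yourself flag the generalised von Neumann estimate for a genuinely non-diagonal form as the main obstacle, and you supply no mechanism for it; a linear $L^q$ restriction bound for $\sum a(n)e(n\theta)$ does not control $\sum a(\mathbf n)e\big(\alpha f(\mathbf n)\big)$ when $f$ does not split diagonally.

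The missing idea, and the actual engine of the paper, is Proposition \ref{proprestriction}: a minor-arc bound $\int_{\mathfrak m(Q)}|S(\alpha)|\,d\alpha\ll\delta^{s-10}W^2Y^{s-2}\phi(W)^{-s}Q^{-10/21}$ for the quadratic exponential sum itself, with \emph{no} logarithmic losses and with exactly the main-term normalisation. It is proved by Cauchy--Schwarz against the nonsingular off-diagonal $5\times5$ block guaranteed by $\rankoff(f)\ge5$, which linearises the form and reduces everything to counting prime pairs $p_1-p_2=Wv$ in $I$; an upper-bound sieve via Bombieri--Vinogradov (Lemma \ref{lemmasieve}) gives the loss-free count, and the arithmetic factor $\rho(Wv)$ is averaged away in Lemma \ref{lemmacalH}. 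With this, the paper dispenses with transference and the primorial altogether: $W\le\log X$ is an arbitrary modulus built up multiplicatively during Roth's increment, run directly on the weighted prime indicator, with $W_{m+1}\le\alpha_0^{-1}\delta^{-21}W_m$ and $\delta_{m+1}\ge\delta_m(1+\alpha_0\delta_m^{70})$ over $\asymp\delta_1^{-71}$ iterations; the exponent $\tfrac1{80}$ is precisely what makes $\tfrac{71}{80}<1$, so that the accumulated modulus stays below $\log X$. Without the sieve-powered restriction estimate and the general-modulus iteration, the proposal proves at best a bound of the strength of \eqref{BPdensity} or \eqref{Chowdensity}, not \eqref{thmdensity}.
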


The proof of Theorem \ref{theorem1} involves several important methods in number theory, such as the Hardy-Littlewood (circle) method, the sieve method, Green's W-trick and Roth's method of density increment.

We apply the Hardy-Littlewood method to establish the asymptotic formula for the (weighted) number of solutions to \eqref{quadequation} with $x_1,\ldots,x_s\in \mathcal{A}$ when $\mathcal{A}$ has nice arithmetic distributions. Let
\begin{align}\frac{X}{2}\le Y_0<Y_0+Y\le X\ \ \ \textrm{ and }\ \ \ Y>\frac{X}{\log X}.\end{align}
Then we use $I$ to denote the interval
\begin{align}\label{defineintervalI}I=(Y_0,\,Y_0+Y].\end{align}
Let
\begin{align}\label{assumptionbW}0\le b< W\le \log X\ \  \textrm{ and }\ \ (b,W)=1.\end{align}
We introduce
\begin{align*}
\nu_f(b,W;I)=\sum_{\substack{x_1,\ldots,x_s\in I\\ f(\mathbf{x})=0 \\ \mathbf{x}\equiv b\pmod{W} }}\Lambda(x_1)\cdots \Lambda(x_s),\end{align*}
where $\mathbf{x}\equiv b\pmod{W}$ means $x_j\equiv b\pmod{W}$ for all $1\le j\le s$. We have the following result.
\begin{proposition}\label{propextendLiu}Let $f(x_1,\ldots,x_s)$ be an indefinite quadratic
form with $s\ge 10$. Suppose that $\rankoff(f)\ge 5$. Then we have
\begin{align*}\nu_f(b,W;I)=\mathfrak{S}^\ast_f(W,b) \mathfrak{K}_f(I) +O_f\big(Y^{s-2}W^2\phi(W)^{-s}(\log X)^{-5s}\big),\end{align*}
where the singular series $\mathfrak{S}^\ast_f(W,b)$ is defined in \eqref{definesingularseries}
and the singular integral $\mathfrak{K}_f(I)$ is defined in \eqref{definesingularint}.
\end{proposition}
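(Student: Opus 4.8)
The plan is to run the Hardy–Littlewood circle method on the exponential sum twisted by the congruence condition $\mathbf{x}\equiv b\pmod W$, adapting Liu's treatment of $\nu_f$ to the progression setting. Write
\[
\nu_f(b,W;I)=\int_0^1 \Big(\ \sum_{\substack{x\in I\\ x\equiv b\,(W)}}\Lambda(x)e(\alpha\, \langle \text{row of }M\rangle)\ \Big)\text{-type product}\,d\alpha,
\]
more precisely, if $f(\mathbf{x})=\mathbf{x}M\mathbf{x}^T$ then the number of zeros is detected by $\int_0^1 e(\alpha f(\mathbf{x}))\,d\alpha$; since $f$ is quadratic this is not a pure product, so the first step is the standard device of separating variables using the off-diagonal structure. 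Because $\rankoff(f)\ge 5$, after a suitable unimodular change of variables one may assume (as in Liu \cite{Liu} and Keil \cite{Keil}) that five of the variables, say $x_1,\dots,x_5$, pair off with a disjoint block $x_6,\dots,x_{10}$ through an invertible $5\times 5$ matrix, so that $f$ is linear in each of these blocks separately. This lets one write the generating function as a genuine product of linear exponential sums
\[
S_j(\alpha;b,W)=\sum_{\substack{x\in I\\ x\equiv b\,(W)}}\Lambda(x)\,e(\alpha\, L_j(\mathbf{x})x),
\]
after fixing the remaining variables, and the main analytic input becomes bounds for these sums on major and minor arcs.

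The second step is the major/minor arc dissection. On the major arcs $\mathfrak{M}$ — rationals $a/q$ with $q$ small and $|\alpha-a/q|$ small, but with the modulus now interacting with $W$ — one uses the prime number theorem in arithmetic progressions (Siegel–Walfisz) to replace $\sum_{x\equiv b (W),\,x\equiv a'(q)}\Lambda(x)e(\cdots)$ by its expected main term $\frac{\phi(qW/(q,W))^{-1}}{\text{measure}}\cdots$, picking up the local factors that assemble into the truncated singular series $\mathfrak{S}^\ast_f(W,b)$ and the singular integral $\mathfrak{K}_f(I)$ over the interval $I$. The fact that $W\le \log X$ is what keeps the Siegel–Walfisz error, which is $O(Y\exp(-c\sqrt{\log X}))$ per sum, comfortably under the target $Y^{s-2}W^2\phi(W)^{-s}(\log X)^{-5s}$; one needs $s\ge 10$ (so $s-2\ge 8$ powers of $Y$ in the main term) together with the factor $\phi(W)^{-s}$ to soak up the arithmetic normalization. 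On the minor arcs $\mathfrak{m}$ one needs a power-saving bound for $\sup_{\alpha\in\mathfrak{m}}|S_j(\alpha;b,W)|$; this is Vinogradov-type estimation for $\sum\Lambda(x)e(\beta x)$ in a progression to modulus $W$, combined with the extra averaging over the $2(s-5)$ free variables in the other $s-2\ge 8$ places. Since $\rankoff(f)\ge 5$ guarantees at least five independent linear forms available for the circle-method product after fixing $s-5$ variables, one gets at least five genuine exponential-sum savings, which is enough — exactly as in Liu's original argument — to beat the measure of the minor arcs and land below $(\log X)^{-5s}Y^{s-2}$.

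The main obstacle is the uniformity in $W$ and $b$: every estimate — the Siegel–Walfisz main term, the minor-arc Weyl bound, the treatment of the "medium" arcs where $q$ is a moderate power of $\log X$ — must be carried out with the modulus $W$ present, and the error term is required to carry the precise shape $W^2\phi(W)^{-s}$, not merely $W^{O(1)}$. Controlling the genuine singular series, $\mathfrak{S}^\ast_f(W,b)$, and showing that the major-arc contribution factors cleanly as $\mathfrak{S}^\ast_f(W,b)\,\mathfrak{K}_f(I)$ with an error of the stated quality, requires tracking how the local densities at primes dividing $W$ decouple from those at primes not dividing $W$; this is where the bulk of the bookkeeping lies. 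A secondary technical point is that $f$ is merely assumed \emph{indefinite} rather than of a fixed signature, so the singular integral $\mathfrak{K}_f(I)$ must be shown to be of the expected order $\asymp Y^{s-2}$ uniformly for intervals $I\subseteq(X/2,X]$ of length $Y> X/\log X$, which follows from the indefiniteness and $s\ge 10$ by a standard stationary-phase or geometry-of-numbers computation. Modulo these uniformity checks, the proof is a routine, if lengthy, adaptation of \cite{Liu}.
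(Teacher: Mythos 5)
Your major-arc outline matches the paper: introduce congruence classes modulo $qW$, apply Siegel--Walfisz with partial summation to each progression (the hypothesis $W\le\log X$ keeps everything in the Siegel--Walfisz range), and assemble the complete singular series via the bound $B^\ast_{W,b}(q)\ll W^5\phi(W)^{-s}q^{-3/2+\varepsilon}$ and the singular integral via $\mathcal{K}(\beta;I)\ll Y^s(1+Y^2|\beta|)^{-2}$. But your minor-arc argument has a genuine gap, in two places. First, the structural claim is false: $\rankoff(f)\ge 5$ gives an invertible $5\times5$ off-diagonal submatrix, but it does not allow you to assume, even after a unimodular change of variables, that $f$ is linear in each of two disjoint blocks --- the diagonal terms $a_{i,i}x_i^2$ and the within-block cross terms remain, and any change of variables that removed them would destroy the conditions ``$x_i$ prime, $x_i\equiv b\pmod{W}$, $x_i\in I$'', which must be imposed variable by variable. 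Second, and more seriously, even for a purely bilinear block the generating function factors as $\sum_{\mathbf y}\Lambda(\mathbf y)\prod_j\sum_x\Lambda(x)e\big(\alpha x h_j(\mathbf y)\big)$, a product of linear sums whose frequencies $\alpha h_j(\mathbf y)$ depend on $\mathbf y$; knowing $\alpha\in\mathfrak m$ does not place $\alpha h_j(\mathbf y)$ in the minor arcs, so a sup-norm Vinogradov-type bound for $\sum\Lambda(x)e(\theta x)$ is not applicable to these inner sums.

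The paper avoids all of this. Its minor-arc input is Liu's Lemma 3.7, quoted as \eqref{Txi}: a pointwise bound $T(\alpha)\ll(X\log X)^s\big(\tfrac1X+\tfrac1{q(1+X^2|\beta|)}+\tfrac{q(1+X^2|\beta|)}{X^2}\big)^{5/2}$ valid for \emph{arbitrary} weights with $\sum|\xi_i(x)|^2\ll X\log X$. The exponent $\tfrac52$ comes from Cauchy--Schwarz (Weyl differencing) in the five variables indexing the rows of the off-diagonal submatrix, which kills the diagonal and within-block terms automatically and reduces matters to counting solutions of linear congruences governed by the invertible $5\times5$ matrix; no estimate for exponential sums over primes is used on the minor arcs at all. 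Since the minor arcs here are only $q>Q_0=(\log X)^{20s}$, a $\log$-power saving suffices, and integrating \eqref{Txi} gives Lemma \ref{lemmaliu1}. To repair your sketch, replace the ``product of linear prime sums plus Vinogradov'' step by this differencing argument (the more elaborate counting over $\mathbf y$ of how often $\alpha h_j(\mathbf y)$ falls near rationals of small denominator is what the paper reserves for the sharper restriction estimate of Proposition \ref{proprestriction}, not for Proposition \ref{propextendLiu}).
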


Proposition \ref{propextendLiu} is a routine extension of Theorem 2.1 of Liu \cite{Liu} with variables in short intervals and arithmetic progressions. We have to do such an extension because it will be used to prove Theorem \ref{theorem1}. We can explain more on the singular series $\mathfrak{S}^\ast_f(W,b)$
and the singular integral $\mathfrak{K}_f(I)$ when $f$ is translation invariant.

\begin{corollary}\label{corollary13}Let $f(x_1,\ldots,x_s)$ be a translation invariant indefinite quadratic
form with $s\ge 10$. Suppose that $\rankoff(f)\ge 5$. Then we have
\begin{align}\label{asymnutran}\nu_f(b,W;I)=\frac{W^2Y^{s-2}}{\phi(W)^s}\mathfrak{S}_f(W) \mathfrak{I}_f  +O_f\big(Y^{s-2}W^2\phi(W)^{-s}(\log X)^{-5s}\big),\end{align}
where $\phi(\cdot )$ is Euler's totient function, $\mathfrak{S}_f(W)$ is defined in \eqref{definedensityprod} and  $\mathfrak{I}_f$ is defined in \eqref{definesingularinttran}.
Moreover, there exists a positive number $C_f$ (independent of $W$), such that
\begin{align*}\mathfrak{S}_f(W)\mathfrak{I}_f>C_f.\end{align*}
\end{corollary}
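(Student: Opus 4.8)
The plan is to obtain the asymptotic formula \eqref{asymnutran} by specialising Proposition~\ref{propextendLiu} to translation invariant $f$, and then to prove the lower bound on $\mathfrak{S}_f(W)\mathfrak{I}_f$ as a separate, self-contained step. The whole simplification rests on one elementary identity: writing $x_j=b+Wy_j$ and using $\mathbf{1}M=\mathbf{0}$,
\[
f(b\mathbf{1}+W\mathbf{y})=b^{2}\,\mathbf{1}M\mathbf{1}^{T}+2bW\,\mathbf{1}M\mathbf{y}^{T}+W^{2}f(\mathbf{y})=W^{2}f(\mathbf{y}),
\]
so the congruence-and-vanishing conditions $\mathbf{x}\equiv b\pmod{W}$, $f(\mathbf{x})=0$ are equivalent to $f(\mathbf{y})=0$ alone. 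On the circle-method side this says exactly that the local count at a prime $p\mid W$ does not depend on $b$ (shift $\mathbf{y}$ by a suitable multiple of $\mathbf{1}$ and invoke $\mathbf{1}M=\mathbf{0}$ once more), and that, after the $\Lambda$-weights are accounted for, the primes dividing $W$ contribute precisely $W^{2}/\phi(W)^{s}$; the factors at $p\nmid W$ are by definition $\mathfrak{S}_f(W)$ of \eqref{definedensityprod}. Likewise, for the singular integral, translation invariance and homogeneity give $f(Y_0\mathbf{1}+Y\mathbf{u})=Y^{2}f(\mathbf{u})$, so the change of variables $\mathbf{x}=Y_0\mathbf{1}+Y\mathbf{u}$, $\theta\mapsto Y^{-2}\eta$ turns $\mathfrak{K}_f(I)$ into $Y^{s-2}\mathfrak{I}_f$ with $\mathfrak{I}_f=\int_{\R}\int_{[0,1]^{s}}e(\eta f(\mathbf{u}))\,d\mathbf{u}\,d\eta$ as in \eqref{definesingularinttran}, the absolute convergence for $\rankoff(f)\ge5$ being inherited from \cite{Liu}. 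Inserting both identities into Proposition~\ref{propextendLiu} yields \eqref{asymnutran} with the same error term.

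For the lower bound I would treat $\mathfrak{I}_f$ and $\mathfrak{S}_f(W)$ in turn. The integral $\mathfrak{I}_f$ equals the real density $\lim_{\varepsilon\to0}(2\varepsilon)^{-1}\,\mathrm{vol}\{\mathbf{u}\in[0,1]^{s}:|f(\mathbf{u})|<\varepsilon\}$, which is positive as soon as $\{f=0\}$ has a real nonsingular point in the open cube $(0,1)^{s}$. Now $\rankoff(f)\ge5$ forces $\rank(M)\ge5$, so the nondegenerate part of $f$ is indefinite in at least three variables, hence isotropic over $\R$; this gives a nonsingular real zero $\mathbf{a}$ of $f$, and $\mathbf{a}$ cannot be a multiple of $\mathbf{1}$ since $M\mathbf{1}^{T}=\mathbf{0}$. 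Because scaling and the translation $\mathbf{a}\mapsto\mathbf{a}+c\mathbf{1}$ both preserve nonsingular zeros — $\nabla f(t\mathbf{a}+c\mathbf{1})=2tM\mathbf{a}^{T}\neq\mathbf{0}$ — choosing $t>0$ small and $c$ suitably places a nonsingular zero inside $(0,1)^{s}$, so $\mathfrak{I}_f$ is a fixed positive constant depending only on $f$.

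Next, write $\mathfrak{S}_f(W)=\prod_{p\nmid W}\sigma_p(f)$, where $\sigma_p(f)>0$ is the $p$-adic solution density, normalised so that $\sigma_p(f)=1$ in the absence of a $p$-adic obstruction. Each factor is strictly positive: a nondegenerate quadratic form in at least five variables over $\Q_p$ is isotropic, giving a nonsingular $\Q_p$-zero of $f$; translating by $c\mathbf{1}$ with $v_p(c)$ sufficiently negative makes all coordinates of equal valuation, and rescaling makes them units, without destroying nonsingularity, so Hensel's lemma produces solutions of $f=0$ modulo every $p^{k}$ with all coordinates units, i.e. $\sigma_p(f)>0$. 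The rank condition also furnishes square-root cancellation in the complete sums $\sum_{\mathbf{x}\bmod p^{k}}e(af(\mathbf{x})/p^{k})$, whence $\sigma_p(f)=1+O_f(p^{-3/2})$ for all but finitely many $p$. Consequently the full product $\mathfrak{S}_f:=\prod_{p}\sigma_p(f)$ converges to a positive limit and $\prod_{p}\max(1,\sigma_p(f))=:C'_f<\infty$, so for every $W$ as in \eqref{assumptionbW},
\[
\mathfrak{S}_f(W)=\frac{\mathfrak{S}_f}{\prod_{p\mid W}\sigma_p(f)}\ \ge\ \frac{\mathfrak{S}_f}{C'_f}>0 ,
\]
uniformly in $W$; the corollary then holds with $C_f=\mathfrak{I}_f\,\mathfrak{S}_f/C'_f$.

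The step I expect to be the real obstacle is precisely this last uniform lower bound. One must know \emph{simultaneously} that every local density $\sigma_p(f)$ is strictly positive — so that dropping the factors at primes dividing $W$ cannot make the product degenerate — and that the product converges fast enough that the dropped part stays bounded. Both are consequences of $\rank(M)\ge5$ together with translation invariance: $p$-adic isotropy of forms in five or more variables, Weil-type bounds for quadratic exponential sums, and the fact that translation by multiples of $\mathbf{1}$ lets one move any real or $p$-adic zero into the unit cube, respectively into the units. Once these are in place, matching the remaining objects in \eqref{definesingularseries}, \eqref{definesingularint}, \eqref{definedensityprod} and \eqref{definesingularinttran} term by term is routine given the identity $f(b\mathbf{1}+W\mathbf{y})=W^{2}f(\mathbf{y})$.
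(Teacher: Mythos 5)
Your overall route is the same as the paper's: specialise Proposition~\ref{propextendLiu}, use translation invariance to factor the singular series and to reduce $\mathfrak{K}_f(I)$ to $Y^{s-2}\mathfrak{I}_f$, and then prove positivity of the local densities. Your positivity argument (real and $p$-adic isotropy of the nondegenerate part in $\ge 5$ variables, plus the observation that translating a nonsingular zero by $c\mathbf{1}$ and rescaling moves it into the unit cube, resp.\ into the units, without destroying nonsingularity) is a correct, slightly more explicit version of what the paper does via Lemma~\ref{lemmatwodensity} and the classical fact that $\sigma_p>0$ for $\rank(M)\ge 5$; the convergence of the tail product via square-root cancellation is exactly Lemma~\ref{lemmaboundBq}.

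There is, however, one concrete slip in the bookkeeping of the singular series. You assert that after the substitution $\mathbf{x}=b\mathbf{1}+W\mathbf{y}$ "the primes dividing $W$ contribute precisely $W^{2}/\phi(W)^{s}$", and accordingly you work with $\mathfrak{S}_f(W)=\prod_{p\nmid W}\sigma_p$ and the identity $\mathfrak{S}_f(W)=\mathfrak{S}_f/\prod_{p\mid W}\sigma_p$. This contradicts your own identity $f(b\mathbf{1}+W\mathbf{y})=W^{2}f(\mathbf{y})$: for $p^{t}\,\|\,W$ the condition $f(\mathbf{x})\equiv 0\pmod{p^{k}}$ becomes $f(\mathbf{y})\equiv 0\pmod{p^{k-2t}}$, which is vacuous only for $k\le 2t$; the levels $k>2t$ survive and contribute the full unrestricted density $\sigma_p$. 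The correct local factor at $p^{t}\,\|\,W$ is therefore $\frac{p^{2t}}{\phi(p^{t})^{s}}\sigma_p$ (this is the computation in Lemma~\ref{lemmasingularseries}, via $S^{\ast}_{p^{t},b}(p^{k},a)=p^{2st}S(p^{k-2t},a)$ for $k>2t$ in Lemma~\ref{lemmaGausspower}), and the paper's $\mathfrak{S}_f(W)$ in \eqref{definedensityprod} is $\bigl(\prod_{p\mid W}\sigma_p\bigr)\bigl(\prod_{p\nmid W}\sigma_p^{\ast}\bigr)$, not $\prod_{p\nmid W}\sigma_p^{\ast}$. As written, your main term has the wrong constant (off by $\prod_{p\mid W}\sigma_p$) relative to the object the corollary refers to. The fix is routine and does not affect your positivity argument, since you have already shown $\sigma_p>0$ and $\sigma_p^{\ast}>0$ for every $p$ and $\sigma_p,\sigma_p^{\ast}=1+O(p^{-3/2+\varepsilon})$, but the factor must be kept for the stated formula \eqref{asymnutran} to be the one proved.
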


Corollary \ref{corollary13} yields Theorem \ref{theorem1} in the special case when $\mathcal{A}=\{p\in I:\, p\equiv b\pmod{W}\}$. Now we turn to an arbitrary set $\mathcal{A}\subseteq \mathcal{P}\cap \{1,2,\ldots,X\}$. Following Green \cite{Green}, we consider  \begin{align}\label{defineAp}\mathcal{A}'=\{y:\ Wy+b\in \mathcal{A}\cap I\}.\end{align}
It is clear the set $\mathcal{A}'$ is closely related to primes $x\in \mathcal{A}\cap I$ in the arithmetic progression $b\pmod{W}$. As usual in this topic, we use the letter $W$ to express we shall apply the $W$-trick. We highlight a difference in our proof. In \cite{BP,Chow,Green}, one may need to consider the congruence modulo $\prod_{p\le W}p$, while we consider the congruence $x\equiv b\pmod{W}$. Note that $\prod_{p\le W}p\approx e^W$. This is perhaps an apparent reason why we save a logarithmic symbol comparing to \eqref{BPdensity} of Browning and Prendiville. In order to deal with an arbitrary set $\mathcal{A}'$, we have to study the restriction theory. We introduce the function
\begin{align}\label{defineLambdabWI}\Lambda_{b,W;I}(x)=\begin{cases}\Lambda(Wx+b) \ \ & \textrm{ if }\ Wx+b\in I,
\\ 0\ \ &  \textrm{ otherwise}.\end{cases}\end{align}
For any $i\ge 1$, let $\{\lambda_{i}(x)\}_{x=1}^\infty$ be a sequence satisfying
\begin{align}\label{definelambdai}|\lambda_{i}(x)|\le \Lambda_{b,W;I}(x).\end{align}
Note that the characteristic function $1_{\mathcal{A}'}$ satisfies
$$|1_{\mathcal{A}'}(x)|\le \Lambda_{b,W;I}(x).$$
 We assume there exists $0<\delta\le 2$ such that
\begin{align}\label{assumptiononlambdai}\sum_{x}|\lambda_{i}(x)|\le \frac{\delta Y}{\phi(W)}\end{align}
for all $i\ge 1$. We consider the exponential sum $S(\alpha):=S(\alpha;\lambda_1,\ldots,\lambda_s)$ in the following
\begin{align}\label{defSalpha}S(\alpha)=
\sum_{x_1,\ldots,x_s}\lambda_1(x_1)\ldots \lambda_s(x_s)
e\big(\alpha f(x_1,\ldots,x_s)\big).\end{align}
We shall establish a restriction estimate for $S(\alpha)$ over minor arcs. We define
the major arcs
\begin{align}\label{defineMQ}\mathfrak{M}(Q)=\bigcup_{1\le q\le
Q}\bigcup_{\substack{a=1
 \\ (a,q)=1}}^q\mathfrak{M}(q,a;Q),\end{align}
where the intervals $\mathfrak{M}(q,a;Q)$ are
\begin{align*}\mathfrak{M}(q,a;Q)=\Big\{\alpha:\ \big|\alpha-\frac{a}{q}\big|\le \frac{Q}{q(Y/W)^2}\Big\}.\end{align*}
The intervals $\mathfrak{M}(q,a;Q)$ are pairwise disjoint for $1\le
a\le q\le Q$ and $(a,q)=1$ provided that $2Q<  Y/W$. Then for $Q< \frac{Y}{2W}$, we define the minor arcs
\begin{align}\label{definemQ}\mathfrak{m}(Q)=[(Y/W)^{-1},1+(Y/W)^{-1}]\setminus\mathfrak{M}(Q).\end{align}
Our restriction estimate over minor arcs is as follows.
\begin{proposition}\label{proprestriction}Let $f(x_1,\ldots,x_s)$ be an indefinite quadratic
form with $s\ge 10$. Suppose that $\rankoff(f)\ge 5$. Let $S(\alpha)$ be defined in \eqref{defSalpha}. Suppose that $Q\le \log X$. Then we have
\begin{align}\label{restrictionestimate}\int_{\mathfrak{m}(Q)}|S(\alpha)|d\alpha\ll_f \frac{\delta ^{s-10} W^2Y^{s-2}}{\phi(W)^{s}}Q^{-\frac{10}{21}},\end{align}
where the implied constant depends only on the form $f$.
\end{proposition}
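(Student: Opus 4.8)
The plan is to prove Proposition~\ref{proprestriction} by combining a pointwise Weyl-type bound on the minor arcs with a mean-value (restriction) estimate of $L^2$ type, interpolated via an $L^{s-\text{something}}$ averaging argument in the style of Bourgain. First I would normalise: since $|\lambda_i(x)|\le\Lambda_{b,W;I}(x)$ and $\Lambda_{b,W;I}$ is supported on $\ll Y/W$ integers with $\ell^1$-norm $\ll Y/\phi(W)$ and $\ell^\infty$-norm $\ll\log X$, all exponential sums are controlled by these normalisations; the factor $\delta^{s-10}$ on the right of \eqref{restrictionestimate} must come from using the hypothesis \eqref{assumptiononlambdai} $\sum_x|\lambda_i(x)|\le\delta Y/\phi(W)$ for $s-10$ of the variables (or rather, for the variables beyond the ten that get ``used up'' in the core estimate), while the remaining variables are bounded trivially by $\Lambda_{b,W;I}$. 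So the target is really: the integral over $\mathfrak{m}(Q)$ of the product of $10$ generic exponential sums, each weighted by something $\le\Lambda_{b,W;I}$ and with $\ell^1$-norm $\le Y/\phi(W)$, is $\ll W^2 Y^8/\phi(W)^{10}\cdot Q^{-10/21}$.

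The key steps, in order, are as follows. (1) Establish a minor-arc Weyl bound: for $\alpha\in\mathfrak{m}(Q)$, using the $\rankoff(f)\ge5$ hypothesis one can, after a linear change of variables isolating an off-diagonal $5\times5$ block, write $f$ (on the relevant variables) in a shape where the exponential sum factors through bilinear forms $\sum_{x,y}e(\alpha \,c\, x y)$; a Vinogradov-type / Cauchy--Schwarz argument then gives $\sup_{\alpha\in\mathfrak{m}(Q)}|S(\alpha)|\ll (W^2Y^{s-2}/\phi(W)^s)\cdot(\log X)^{O(1)} Q^{-\kappa}$ for some $\kappa>0$; this is precisely where the $Q^{-10/21}$ exponent will eventually be tracked, so one wants $\kappa$ as close to $1/2$ as the bilinear structure and the $W$-trick losses allow. (2) Establish an $L^2$ (Parseval) bound: $\int_0^1|S(\alpha)|^2\,d\alpha$ counts weighted solutions of $f(\mathbf x)=f(\mathbf x')$, which by the diagonal behaviour of $f$ and the divisor bound is $\ll (Y/W)^{s-1}(\log X)^{O(1)}\cdot(Y/\phi(W))^{?}$ — more carefully one wants a bound of the form $\ll \delta^{2(s-10)}(W^2Y^{s-2}/\phi(W)^s)\cdot Y^2/W^2\cdot(\log X)^{O(1)}$ so that it pairs correctly with step (1). (3) Interpolate: $\int_{\mathfrak m(Q)}|S|\,d\alpha\le(\sup_{\mathfrak m(Q)}|S|)^{\theta}\big(\int_0^1|S|^2\big)^{(1-\theta)/2}$-type Hölder is too lossy, so instead I would run the standard argument of raising to a suitable power, using the pointwise bound for the ``hardest'' variables and the mean-value bound with the trivial $\ell^\infty$ control to absorb the rest, or equivalently use an epsilon-removal / large-values dissection on $\mathfrak m(Q)$ to convert the supremum decay $Q^{-\kappa}$ into the claimed $Q^{-10/21}$ after accounting for the number of scales.

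**The main obstacle.** The hard part will be step (1): getting a clean power saving $Q^{-\kappa}$ with $\kappa$ large enough (the exponent $10/21$ in the final statement is delicate — $10/21$ is slightly less than $1/2$, consistent with a $\sqrt Q$-type saving degraded by the $W$-trick and the passage through several Cauchy--Schwarz steps and divisor-function losses) uniformly for all $\alpha\in\mathfrak m(Q)$ and for weights as general as the $\lambda_i$. One must carefully exploit $\rankoff(f)\ge5$ so that after diagonalising the off-diagonal block the sum genuinely sees a bilinear form in two independent blocks of variables, apply Weyl differencing / Cauchy--Schwarz in those, and then use a Dirichlet-approximation argument on $\mathfrak m(Q)$ to conclude; the bookkeeping of the $W$ and $\phi(W)$ powers through each Cauchy--Schwarz, and ensuring no logarithmic losses beyond $(\log X)^{O(1)}$ that would spoil the later density-increment iteration, is where essentially all the work lies. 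A secondary technical point is handling the $\log$-weighted $\Lambda$ rather than a bounded function: one splits off the contribution of prime powers trivially and uses a Brun--Titchmarsh / sieve upper bound $\sum_{Wx+b\le X}\Lambda(Wx+b)\ll Y/\phi(W)$ together with the Montgomery--Vaughan / large sieve to keep all mean values under control.
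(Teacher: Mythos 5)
There is a genuine gap, and it is the one the paper itself flags as the crux: your plan carries $(\log X)^{O(1)}$ losses in both the pointwise bound of step (1) and the mean value of step (2), and no interpolation can remove them. Since $Q\le \log X$, the claimed saving $Q^{-10/21}$ is at worst $(\log X)^{-10/21}$, so even a single factor of $\log\log X$ (let alone $(\log X)^{O(1)}$) on the right-hand side of \eqref{restrictionestimate} destroys the statement; the introduction states explicitly that the weak version with a $(\log X)^{10}$ loss follows easily from Liu's work and is useless for the density-increment application. The missing idea is sieve-theoretic. After the Cauchy--Schwarz step that exploits $\rankoff(f)\ge 5$ (which you do anticipate: one isolates a $5\times 5$ off-diagonal block and reduces to $|S_0(\alpha)|^2\ll (Y/\phi(W))^5 R(\alpha)$ with $R(\alpha)$ a bilinear object in $\mathbf{y},\mathbf{z}$), one is faced with the correlation sum $\sum_{\mathbf{y}-\mathbf{z}=\mathbf{v}}\Lambda_{b,W;I}(\mathbf{y})\Lambda_{b,W;I}(\mathbf{z})$. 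The paper bounds the underlying quantity $\Upsilon(b,W;I;v)$ (prime pairs $p_1-p_2=Wv$ in the progression $b\bmod W$) by an upper-bound sieve with level of distribution supplied by Bombieri--Vinogradov, getting $\ll \frac{Y}{\phi(W)(\log X)^2}\rho(Wv)$ with $\rho(x)=\prod_{p\mid x}(1+1/p)$ --- the two logarithms saved here exactly cancel the two von Mangoldt weights, leaving only a constant. A further lemma shows $\rho(\mathbf{v})$ averages to $O(1)$ over the relevant lattice (a trivial bound would cost $\log\log X$, again fatal). Your "Brun--Titchmarsh for the $\ell^1$-norm" remark addresses a different, easier point and does not substitute for this pair-correlation sieve.

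A second, more structural divergence: the paper does not take a supremum over $\mathfrak{m}(Q)$ at all. It splits $\mathfrak{m}(Q)$ into the far range $\mathfrak{m}(Q_0)$ with $Q_0=(\log X)^{20s}$ (handled by Liu's Weyl-type estimate, where log losses are harmless because one saves $(\log X)^{-8s}$) and the near range $\mathfrak{M}(Q_0)\setminus\mathfrak{M}(Q)$, on which it proves the $q$- and $\beta$-dependent pointwise bound $S(\alpha)\ll \delta^{s-10}Y^s\phi(W)^{-s}q^{-5/2+\varepsilon}(1+(Y/W)^2|\beta|)^{-5/2}$ via Siegel--Walfisz evaluation of the linear sums plus the sieve input above. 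The exponent $10/21$ then arises by peeling $\bigl(q+q(Y/W)^2|\beta|\bigr)^{-10/21}\ll Q^{-10/21}$ off the exponent $5/2$ and checking that the remaining exponent $2+\tfrac{1}{42}-\varepsilon$ still makes the sum over $q$ and the integral over $\beta$ converge to $O(W^2Y^{-2})$. So the source of $10/21$ is Gauss-sum/Ramanujan-sum decay on a Farey dissection, not Weyl differencing degraded through Cauchy--Schwarz as you hypothesise; without this dissection a genuine sup-times-measure or large-values argument would not recover the factor $W^2Y^{s-2}$ from the trivial size $Y^s$.
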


Recently, there are several interesting articles on the application of the circle method to the restriction estimate. We refer readers to \cite{HH,Wooley} for the investigation in this topic.

  One can easily prove a weaker version of Proposition \ref{proprestriction} with  an extra factor $(\log X)^{10}$ on the right hand side of \eqref{restrictionestimate}, by using the result from \cite{Liu}. However, our proof of Theorem \ref{theorem1} would fail even if there were an extra factor $\log\log X$. It would also fail if there were an extra factor $W^{0.01}$. Note that the upper bound in \eqref{restrictionestimate} almost coincides with the right order of $\nu_f(b,W;I)$ in \eqref{asymnutran} up to a constant. In additive prime number theory, it is well-known that the sieve method can be applied to obtain an upper bound, which is a constant multiple of the right order. Therefore, in the proof of Proposition \ref{proprestriction}, we not only benefit from the $W$-trick, but also take advantage of the sieve theory. The combination of the circle method and sieve method has many applications in the Waring-Goldbach problem, and one may refer to Br\"udern \cite{Brudern} and Kawada-Wooley \cite{KW}. The result in this paper can be viewed as a new example, in which the circle method and sieve method work together.

We point out a new feature in the proof here. In order to capture solutions in dense subsets of primes, in previously works (see \cite{Chow,Green}, for example), one may use the transference principle. Since Proposition \ref{proprestriction} provides an acceptable restriction estimate over minor arcs, we can avoid the use of transference principle and instead we can apply Roth's method directly. Therefore, with the asymptotic information in Corollary \ref{corollary13} and the restriction estimate in Proposition \ref{proprestriction}, we are able to apply Roth's argument of density increment to complete the proof of Theorem \ref{theorem1}.

\vskip3mm

In Section 2, we prepare some technical lemmas to explain the singular series and singular integral. We shall prove Proposition \ref{propextendLiu} and its corollary in Section 3. We start to prove Proposition \ref{proprestriction} in Section 5, and we shall finish it in Section 5. Finally, we complete the proof of Theorem \ref{theorem1} in Section 6.
\vskip3mm

As usual, we write $e(z)$ for $e^{2\pi iz}$. We assume
that $X$ is sufficiently large. We use $\ll$ and
$\gg$ to denote Vinogradov's well-known notations. The implied constant may depend on $f$. Denote by $\phi(q)$ Euler's totient function, and $\tau(q)$ the divisor function. For a finite subset $A\subseteq \N$, we denote by $|A|$ the cardinality of $A$, while for an interval $J$, we use
$|J|$ to denote the length of $J$.


We use bold face letters to denote vectors whose dimensions are clear from the context. For $\mathbf{x}=(x_1,\ldots,x_s)\in \Z^s$ and a function $\xi$, we use $\xi(\mathbf{x})$ to denote the product $\prod_{i=1}^s \xi(x_i)$. We use $\mathfrak{A}(\mathbf{x})$ to indicate that
$\mathfrak{A}(x_i)$ holds for all $i$. The meaning will
be clear from the context. For example,
the congruence $\mathbf{x}\equiv \mathbf{y}\pmod{q}$ means $x_i\equiv y_i\pmod{q}$ for all $i$, while for $b\in \Z$, we use $\mathbf{x}\equiv b\pmod{q}$ to indicate $x_i\equiv b\pmod{q}$ for all $i$.

\vskip3mm
\section{Preparations}

Throughout this paper, we assume $\rankoff(f)\ge 5$, although most results in this section can be proved subject to the weaker condition $\rank(M)\ge 5$.
We begin with the multiple Gauss sum
\begin{align}\label{definegausssum}S_{W,b}^\ast(q,a)=\sum_{\substack{1\le \mathbf{c}\le qW\\ (\mathbf{c},q)=1
\\ \mathbf{c}\equiv b\pmod{W}}}e\big(
\frac{af(\mathbf{c})}{q}\big).\end{align}
When $W=1$, we write
\begin{align}\label{definegausssum2}S^\ast(q,a)=S_{1,0}^\ast(q,a).\end{align}
We also define
\begin{align}\label{definegausssum3}S(q,a)=\sum_{\substack{1\le \mathbf{c}\le q}}e\big(
\frac{af(\mathbf{c})}{q}\big).\end{align}

\begin{lemma}\label{lemmaGaussmul}Suppose that $(q_1W_1,q_2W_2)=1$. Then we have
\begin{align*}S_{W_1W_2,b}^\ast(q_1q_2,a)=S_{W_1,b}^\ast(q_1,a\overline{q_2})S_{W_2,b}^\ast(q_2,a\overline{q_1}),\end{align*}
where $\overline{x}$ in $e\big(\frac{s\overline{x}}{r})$ denotes the inverse of $x$ modulo $r$.
\end{lemma}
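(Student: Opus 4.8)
The plan is a direct Chinese Remainder Theorem computation. Note first that the hypothesis $(q_1W_1,q_2W_2)=1$ implies all four pairwise coprimalities $(q_1,q_2)=(q_1,W_2)=(W_1,q_2)=(W_1,W_2)=1$; in particular the inverses $\overline{q_1}\pmod{q_2}$ and $\overline{q_2}\pmod{q_1}$ occurring in the statement are well defined.

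First I would set up the bijection. Since $(q_1W_1,q_2W_2)=1$, reduction modulo $q_1W_1$ and modulo $q_2W_2$ gives a bijection between the complete residue system $\{1,\ldots,q_1q_2W_1W_2\}$ and the pairs $(c^{(1)},c^{(2)})$ with $c^{(1)}$ ranging over $\{1,\ldots,q_1W_1\}$ and $c^{(2)}$ over $\{1,\ldots,q_2W_2\}$; apply this coordinatewise to $\mathbf{c}$. Under this bijection: (i) $(\mathbf{c},q_1q_2)=1$ if and only if $(\mathbf{c}^{(1)},q_1)=1$ and $(\mathbf{c}^{(2)},q_2)=1$, because $c_i\bmod q_1$ is a function of $c_i^{(1)}$ and $c_i\bmod q_2$ is a function of $c_i^{(2)}$; and (ii) $\mathbf{c}\equiv b\pmod{W_1W_2}$ if and only if $\mathbf{c}^{(1)}\equiv b\pmod{W_1}$ and $\mathbf{c}^{(2)}\equiv b\pmod{W_2}$, using $(W_1,W_2)=1$ together with $W_j\mid q_jW_j$ (so that $b\bmod W_j$ is recovered correctly from $\mathbf{c}^{(j)}$).

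Next comes the splitting of the exponential. Since $f$ has integer coefficients, $f(\mathbf{c})\in\Z$, and from $\overline{q_2}\,q_2+\overline{q_1}\,q_1\equiv 1\pmod{q_1q_2}$ one obtains the partial-fraction identity $\frac{1}{q_1q_2}\equiv\frac{\overline{q_2}}{q_1}+\frac{\overline{q_1}}{q_2}\pmod{1}$, hence
$$e\Big(\frac{af(\mathbf{c})}{q_1q_2}\Big)=e\Big(\frac{a\overline{q_2}f(\mathbf{c})}{q_1}\Big)\,e\Big(\frac{a\overline{q_1}f(\mathbf{c})}{q_2}\Big).$$
Now $f(\mathbf{c})\bmod q_1$ depends only on $\mathbf{c}\bmod q_1$, which is a function of $\mathbf{c}^{(1)}$, so the first factor equals $e(a\overline{q_2}f(\mathbf{c}^{(1)})/q_1)$; symmetrically the second equals $e(a\overline{q_1}f(\mathbf{c}^{(2)})/q_2)$. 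Substituting into the definition \eqref{definegausssum} of $S_{W_1W_2,b}^\ast(q_1q_2,a)$ and factoring the now-decoupled sum over $(\mathbf{c}^{(1)},\mathbf{c}^{(2)})$ yields precisely $S_{W_1,b}^\ast(q_1,a\overline{q_2})\,S_{W_2,b}^\ast(q_2,a\overline{q_1})$.

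There is no genuine obstacle here; the argument is routine bookkeeping. The only points requiring care are that all four coprimality relations are extracted from the single hypothesis, that in step (ii) the congruence conditions genuinely decouple (which is why one needs $(W_1,W_2)=1$ and not merely $(q_1W_1,q_2W_2)=1$), and that the inverses in the partial-fraction step are paired with the correct denominators to match the stated identity — a matter of notational consistency rather than mathematics.
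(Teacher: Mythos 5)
Your proposal is correct and follows essentially the same route as the paper: a coordinatewise Chinese Remainder Theorem change of variables (the paper writes the explicit reconstruction $\mathbf{c}=\mathbf{c}_1\overline{q_2W_2}\,q_2W_2+\mathbf{c}_2\overline{q_1W_1}\,q_1W_1$ where you describe the inverse bijection via reduction maps, but these are the same decomposition), combined with the standard splitting $e\big(\tfrac{af(\mathbf{c})}{q_1q_2}\big)=e\big(\tfrac{a\overline{q_2}f(\mathbf{c}_1)}{q_1}\big)e\big(\tfrac{a\overline{q_1}f(\mathbf{c}_2)}{q_2}\big)$ and the observation that the coprimality and congruence conditions decouple. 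Your write-up is if anything slightly more careful about the bookkeeping than the paper's, which simply says the result follows by changing variables.
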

\begin{proof} We write
$$\mathbf{c}=\mathbf{c}_1\overline{q_2W_2}\,q_2W_2+\mathbf{c}_2\overline{q_1W_1}\,q_1W_1,$$
 where $\overline{q_1W_1}$ means the inverse of $q_1W_1$ modulo $q_2W_2$, and $\overline{q_2W_2}$ means the inverse of $q_2W_2$ modulo $q_1W_1$. Then the congruence
$\mathbf{c}\equiv b\pmod{W_1W_2}$ is equivalent to
\begin{align*}\mathbf{c}_1\equiv b\pmod{W_1} \ \ \textrm{ and } \ \ \mathbf{c}_2\equiv b\pmod{W_2}.\end{align*}
 We also have
 \begin{align*}e\big(
\frac{af(\mathbf{c})}{q_1q_2}\big)=e\big(
\frac{af(\mathbf{c}_1)\overline{q_2}}{q_1}\big)e\big(
\frac{af(\mathbf{c}_2)\overline{q_1}}{q_2}\big).\end{align*}
The desired result can be obtained by changing variables as above.\end{proof}

\begin{lemma}\label{lemmaGausspower}Let $p$ be a prime and $(ab,p)=1$. Let $t\ge 0$ and $k\ge 0$. Then we have
\begin{align}\label{gausssumboundpower}S_{p^t,b}^\ast(p^k,a)\ll p^{sk-\frac{5}{2}k+5t+\varepsilon}.\end{align}
If $f$ is translation invariant, then we have
\begin{align*}S_{p^t,b}^\ast(p^k,a)=\begin{cases}S^\ast(p^k,a) \ \ \ &\textrm{ if }\ t=0,
\\ p^{sk} \ \ \ &\textrm{ if }\ t\ge 1\ \textrm{ and }\ 0\le k\le 2t,
\\ p^{2st}S(p^{k-2t},a) \ \ \ &\textrm{ if }\ t\ge 1\ \textrm{ and }\ k>2t.\end{cases}\end{align*}
\end{lemma}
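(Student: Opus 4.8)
The plan is to prove the two assertions separately, starting from the general bound \eqref{gausssumboundpower} and then turning to the much sharper evaluation available in the translation invariant case. For the general bound, I would first reduce to the case $t=0$. Indeed, writing the summation variables $\mathbf{c}$ modulo $p^kW$ with $W=p^t$ and using the Chinese-remainder-type splitting already carried out in Lemma 2.1 would not directly help here since $p$ divides both $q=p^k$ and $W=p^t$; instead I would write $\mathbf{c} = \mathbf{d} + p^k\mathbf{e}$ with $\mathbf{d}$ running modulo $p^k$ and $\mathbf{e}$ running modulo $p^t$, so that $f(\mathbf{c})/p^k \equiv f(\mathbf{d})/p^k \pmod 1$ only after accounting for cross terms; more carefully, one extracts the dependence on $\mathbf{e}$ through the bilinear form $2\mathbf{d}M\mathbf{e}^T$, sums trivially over $\mathbf{e}$ to gain a factor $p^{st}$, and is left with a sum resembling $S^\ast(p^k,a)$ but twisted by a congruence condition modulo $p^k$ coming from the $\mathbf{e}$-summation. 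The residual sum is then bounded by $p^{sk - \frac{5}{2}k + \varepsilon}$, which is the standard bound for a non-degenerate quadratic Gauss sum in $s$ variables of rank at least $5$ (each pair of ``paired'' off-diagonal variables contributing square-root cancellation, the hypothesis $\rankoff(f)\ge 5$ guaranteeing at least $5$ such gains); multiplying by the $p^{st}$ from the $\mathbf{e}$-summation and noting $st \le 5t$ after possibly wasting factors gives \eqref{gausssumboundpower}. The exponent $5t$ rather than $st$ is deliberately lossy but suffices downstream.

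For the translation invariant case I would exploit $\mathbf{1}M = \mathbf{0}$ directly. When $t=0$ there is nothing to prove, the claim being the definition \eqref{definegausssum2}. When $t\ge 1$, I would perform the change of variables $\mathbf{c} = b\mathbf{1} + W\mathbf{c}' = b\mathbf{1} + p^t\mathbf{c}'$; since $(b,W)=1$ this is a bijection from $\{\mathbf{c} : \mathbf{c}\equiv b\pmod{p^t}\}$ onto residues $\mathbf{c}'$ modulo $p^{k}$ (after also letting $\mathbf{c}'$ range appropriately; one checks the coprimality condition $(\mathbf{c},p)=1$ becomes automatic since $p\nmid b$). Translation invariance gives $f(b\mathbf{1}+p^t\mathbf{c}') = f(b\mathbf{1}) + 2p^t b\,\mathbf{1}M(\mathbf{c}')^T + p^{2t}f(\mathbf{c}') = p^{2t}f(\mathbf{c}')$, because both the constant term $f(b\mathbf{1}) = b^2\,\mathbf{1}M\mathbf{1}^T$ and the linear term vanish. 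Hence
\begin{align*}
S_{p^t,b}^\ast(p^k,a) = \sum_{\mathbf{c}'} e\Big(\frac{a p^{2t} f(\mathbf{c}')}{p^k}\Big),
\end{align*}
where $\mathbf{c}'$ runs over a complete residue system modulo $p^{k-t}$ coming from $\mathbf{c}$ modulo $p^k W = p^{k+t}$. If $k\le 2t$ then $p^k \mid p^{2t}$, the exponential is identically $1$, and the sum counts $p^{s(k+t)}/p^{st} = p^{sk}$ residue classes — matching the stated value $p^{sk}$. If $k>2t$, then $p^{2t}/\gcd(p^{2t},p^k) = p^{2t}$ cancels against $p^{2t}$ of the modulus, leaving $e(af(\mathbf{c}')/p^{k-2t})$; summing the now-periodic summand over the full range of $\mathbf{c}'$ splits off $p^{s\cdot 2t}$ complete copies of $S(p^{k-2t},a)$, giving $p^{2st}S(p^{k-2t},a)$ as claimed.

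The main obstacle I anticipate is bookkeeping rather than conceptual: getting the ranges of the summation variables exactly right through the substitution $\mathbf{c}=b\mathbf{1}+p^t\mathbf{c}'$ (the variable $\mathbf{c}$ originally runs modulo $qW=p^{k+t}$, so $\mathbf{c}'$ runs modulo $p^k$ with a further $p^t$-fold redundancy that must be tracked to land on the clean powers of $p$), and — in the general bound — isolating the genuine square-root-cancellation mechanism from the off-diagonal block of rank $\ge 5$ while controlling the diagonal and lower-rank directions, which contribute at most trivially. A secondary subtlety is the coprimality constraint $(\mathbf{c},q)=1$: in the translation invariant reduction it evaporates because $p\nmid b$ forces $p\nmid c_i$ automatically, but in the general bound one should either impose it via Möbius inversion or simply note that dropping it only enlarges the sum, which is harmless for an upper bound. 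I would also remark, as the paper does, that rank $\ge 5$ (rather than $\rankoff \ge 5$) is all that is really needed for this lemma, so the argument for the general bound should be phrased in terms of $\rank(M)$.
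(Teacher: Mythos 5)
Your evaluation in the translation invariant case is essentially the paper's argument: substitute $\mathbf{c}=b\mathbf{1}+p^t\mathbf{c}'$, note that $(\mathbf{c},p)=1$ is automatic since $p\nmid b$ and that translation invariance kills both $f(b\mathbf{1})$ and the linear term, and then read off $p^{sk}$ or $p^{2st}S(p^{k-2t},a)$ according to whether $p^k\mid p^{2t}$. (One slip: $\mathbf{c}'$ runs over a complete residue system modulo $p^{k}$, not modulo $p^{k-t}$, since $\mathbf{c}$ runs modulo $qW=p^{k+t}$ and the congruence $\mathbf{c}\equiv b\pmod{p^t}$ removes exactly a factor $p^{st}$; your final counts are nevertheless correct.)

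The general bound \eqref{gausssumboundpower} is where your proposal has a genuine gap. The decomposition $\mathbf{c}=\mathbf{d}+p^k\mathbf{e}$ does not reduce to the case $t=0$: the exponential $e\big(af(\mathbf{c})/p^k\big)$ depends only on $\mathbf{d}$, so summing over $\mathbf{e}$ merely factors out $p^{st}$ and leaves the sum over $\mathbf{d}$ modulo $p^k$ still subject to $\mathbf{d}\equiv b\pmod{p^t}$ --- that is, exactly the original object divided by $p^{st}$, so nothing has been gained. One cannot apply the unrestricted bound $S^\ast(p^k,a)\ll p^{sk-\frac{5}{2}k+\varepsilon}$ to this congruence-restricted sum, and even granting it, your final bookkeeping requires $st\le 5t$, which is false since $s\ge 10$. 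The correct mechanism, and the source of the exponent $5t$, is different: make the same substitution $\mathbf{c}=b\mathbf{1}+p^t\mathbf{x}$ as in the translation invariant case (now a linear term $2abp^t\mathbf{1}M\mathbf{x}^{T}$ survives in the exponent), obtaining a quadratic exponential sum over $\mathbf{x}$ modulo $p^k$ whose quadratic part $af(\mathbf{x})p^{2t}/p^{k}$ has effective modulus $p^{k-2t}$. The standard Weyl differencing argument with $\rank(M)\ge 5$ then saves a factor $(p^{k-2t})^{5/2}$ over the trivial bound $p^{sk}$, giving $p^{sk-\frac{5}{2}(k-2t)+\varepsilon}=p^{sk-\frac{5}{2}k+5t+\varepsilon}$ when $k>2t$; when $k\le 2t$ the trivial bound $p^{sk}$ already suffices because $\frac{5}{2}k\le 5t$. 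The case $t=0$ is Liu's Lemma 5.1, as you indicate.
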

\begin{proof}When $t=0$, we have $S_{p^t,b}^\ast(p^k,a)=S^\ast(p^k,a)$ and its upper bound has been obtained in \cite{Liu} (see the proof of Lemma 5.1 in \cite{Liu}).

We next consider the case $t\ge 1$. We deduce by changing variables $\mathbf{c}=b\mathbf{1}+p^t\mathbf{x}$ that
\begin{align*}S_{p^t,b}^\ast(p^k,a)=e\big(
\frac{ab^2f(\mathbf{\mathbf{1}})}{p^{k}}\big)\sum_{\substack{1\le \mathbf{x}\le p^{k}
}}e\big(
\frac{2abp^t\mathbf{1}M\mathbf{x}^{T}+af(\mathbf{\mathbf{x}})p^{2t}}{p^{k}}\big).\end{align*}
For $k>2t$, by the standard difference argument, we can obtain
\begin{align}\label{ineqk2t}S_{p^t,b}^\ast(p^k,a)\ll p^{sk-\frac{5}{2}k+5t+\varepsilon}.\end{align}
For $k\le 2t$, the estimate \eqref{ineqk2t} holds trivially. This completes the proof of \eqref{gausssumboundpower}.

Now we further assume that $f$ is translation invariant. Then
\begin{align*}S_{p^t,b}^\ast(p^k,a)=\sum_{\substack{1\le \mathbf{x}\le p^{k}
}}e\big(
\frac{af(\mathbf{x})p^{2t}}{p^{k}}\big)=\begin{cases}p^{sk} \ \ &\textrm{ if }\ \ 0\le k\le 2t,
\\ p^{2st}S(p^{k-2t},a)\ \ &\textrm{ if }\ \ k>2t.\end{cases}\end{align*}
This completes the proof.\end{proof}

We define
\begin{align}\label{defineBWb}B_{W,b}^\ast(q)=\frac{1}{\phi(qW)^s}\sum_{\substack{a=1 \\ (a,q)=1}}^qS_{W,b}^\ast(q,a),\end{align}
where $S_{W,b}^\ast(q,a)$ is given in \eqref{definegausssum}. Then we write
\begin{align}B^\ast(q)=B_{1,0}^\ast(q),\end{align}
and define
\begin{align}B(q)=\frac{1}{q^s}\sum_{\substack{a=1 \\ (a,q)=1}}^qS(q,a).\end{align}

The following result is Lemma 5.1 in \cite{Liu}.
\begin{lemma}\label{lemmaboundBq}Let $B^\ast(q)$ and $B(q)$ be defined as above. Then we have
\begin{align*}B^\ast(q)\ll q^{-\frac{3}{2}+\varepsilon}\  \textrm{ and }\  B(q)\ll q^{-\frac{3}{2}+\varepsilon}.\end{align*}
\end{lemma}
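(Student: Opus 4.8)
The plan is to reduce to prime powers by multiplicativity and then feed in the Gauss sum estimates already available. First I would verify that both $B^\ast$ and $B$ are multiplicative functions of $q$. Taking $W_1=W_2=1$ in Lemma \ref{lemmaGaussmul} gives $S^\ast(q_1q_2,a)=S^\ast(q_1,a\overline{q_2})S^\ast(q_2,a\overline{q_1})$ whenever $(q_1,q_2)=1$, and the identical factorisation holds for $S(q_1q_2,a)$ from \eqref{definegausssum3} by the Chinese remainder theorem applied to $\mathbf{c}$ (no coprimality needed). Since $\phi(q_1q_2)^s=\phi(q_1)^s\phi(q_2)^s$ and, as $a$ runs over the reduced residues modulo $q_1q_2$, the pair $(a\overline{q_2}\bmod q_1,\ a\overline{q_1}\bmod q_2)$ runs exactly once over $(\Z/q_1\Z)^\times\times(\Z/q_2\Z)^\times$, it follows that $B^\ast(q_1q_2)=B^\ast(q_1)B^\ast(q_2)$ and $B(q_1q_2)=B(q_1)B(q_2)$. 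Hence it suffices to prove, uniformly in primes $p$ and integers $k\ge1$, that $B^\ast(p^k)\ll_{f,\varepsilon}p^{-\frac32 k+\varepsilon}$ and $B(p^k)\ll_{f,\varepsilon}p^{-\frac32 k+\varepsilon}$; multiplying over the prime powers exactly dividing $q$ costs a further factor $\prod_{p\mid q}p^{\varepsilon}\le q^{\varepsilon}$ and a factor $C^{\omega(q)}=O_{f,\varepsilon}(q^{\varepsilon})$, so that $B^\ast(q),B(q)\ll_{f,\varepsilon}q^{-3/2+\varepsilon}$ after renaming $\varepsilon$.

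For $B^\ast(p^k)$ the bound is immediate. By \eqref{defineBWb} with $W=1$ we have $B^\ast(p^k)=\phi(p^k)^{-s}\sum_{(a,p)=1}S^\ast(p^k,a)$; there are $\phi(p^k)\le p^k$ terms, $\phi(p^k)^{-s}\le 2^s p^{-sk}$, and Lemma \ref{lemmaGausspower} with $t=0$ gives $|S^\ast(p^k,a)|\ll_{f,\varepsilon}p^{sk-\frac52 k+\varepsilon}$. Combining these yields $B^\ast(p^k)\ll_{f,\varepsilon}2^s p^{-\frac32 k+\varepsilon}$, as required.

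For $B(p^k)$ one needs the analogous complete Gauss sum bound $|S(p^k,a)|\ll_{f,\varepsilon}p^{sk-\frac52 k+\varepsilon}$ for $(a,p)=1$; the same arithmetic ($p^{-sk}$ times at most $p^k$ terms) then finishes the proof since $B(p^k)=p^{-sk}\sum_{(a,p)=1}S(p^k,a)$. I would obtain this bound by differencing: for odd $p$, writing $\mathbf{c}=\mathbf{c}'+\mathbf{h}$ and summing over $\mathbf{c}'$ first gives $|S(p^k,a)|^2\le p^{sk}\,\#\{\mathbf{h}\bmod p^k:\ M\mathbf{h}^T\equiv\mathbf{0}\ (\mathrm{mod}\ p^k)\}$, and the Smith normal form $M=UDV$ over $\Z_p$ shows this kernel has size $\prod_i p^{\min(v_p(d_i),k)}$, where $d_1,\ldots,d_s$ are the elementary divisors of $M$. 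Since $\rankoff(f)\ge5$ yields a non-singular $5\times5$ off-diagonal submatrix $B_0$ of $M$, for every odd $p\nmid\det B_0$ we have $\rank_{\F_p}(M)\ge5$, hence at least five of the $d_i$ are $p$-adic units, the kernel has size at most $p^{(s-5)k}$, and therefore $|S(p^k,a)|\le p^{sk-\frac52 k}$ with no $\varepsilon$-loss. (Equivalently, diagonalise $f$ over $\Z_p$ — a $GL_s(\Z_p)$ change of variables is invertible modulo $p^k$ and leaves $S(p^k,a)$ unchanged — and factor into one-variable quadratic Gauss sums, at least five of which have unit coefficient and so contribute modulus $p^{k/2}$.) For the finitely many exceptional primes, namely $p\mid\det B_0$ and $p=2$, the same bound holds with an $\varepsilon$-loss and an implied constant depending on $f$ (and $p$); this is exactly the content of Lemma 5.1 of \cite{Liu}, obtained there by Weyl differencing adapted to the off-diagonal structure, which one may simply quote.

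The main obstacle is precisely this last point: securing the Gauss sum estimate uniformly over all primes, in particular over those finitely many primes where $M$ degenerates modulo $p$ and a naive diagonalisation loses too much. This is the only genuinely non-routine ingredient, and it is where the hypothesis $\rankoff(f)\ge5$ is essential (rather than merely $\rank(M)\ge5$); everything else is bookkeeping with multiplicativity and the trivial bound $\phi(p^k)\le p^k$.
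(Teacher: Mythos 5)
Your proof is correct. The paper itself gives no argument for this lemma -- it simply cites Lemma 5.1 of Liu \cite{Liu} and remarks that the same proof covers $B(q)$ -- and your reconstruction (multiplicativity of $B^\ast$ and $B$ via Lemma \ref{lemmaGaussmul} and the Chinese remainder theorem, then the prime-power bounds, with the complete sum $S(p^k,a)$ handled by Weyl differencing and the Smith normal form over $\Z_p$) is exactly the standard route that the citation points to. One small quibble: your closing claim that $\rankoff(f)\ge 5$ is essential here overstates matters, since your own differencing argument for $S(p^k,a)$ uses only $\rank(M)\ge 5$, consistent with the paper's remark at the start of Section 2 that most results of that section require only the weaker rank hypothesis; the off-diagonal rank is genuinely needed elsewhere (the minor-arc/bilinear analysis), not for this lemma.
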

We point out the statement of Lemma 5.1 in \cite{Liu} provides the estimate for $B^\ast(q)$ only, while the proof works for $B(q)$. Now we consider $B_{W,b}^\ast(q)$. Lemma \ref{lemmaGaussmul} implies the following result.
\begin{lemma}\label{lemmaBWbmulti}Suppose that $(q_1W_1,q_2W_2)=1$. We have
\begin{align*}B_{W_1W_2,b}^\ast(q_1q_2)=B_{W_1,b}^\ast(q_1)B_{W_2,b}^\ast(q_2).\end{align*}
\end{lemma}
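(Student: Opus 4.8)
The plan is to unfold the definition \eqref{defineBWb} of $B_{W_1W_2,b}^\ast(q_1q_2)$, factor each multiple Gauss sum by means of Lemma \ref{lemmaGaussmul}, and then reorganize the resulting sum over $a$ using the Chinese Remainder Theorem together with the multiplicativity of Euler's totient function. Since $(q_1W_1,q_2W_2)=1$ we have in particular $(q_1,q_2)=1$ and $\phi(q_1q_2W_1W_2)=\phi(q_1W_1)\phi(q_2W_2)$, so the normalizing factor $\phi(qW)^{-s}$ appearing in \eqref{defineBWb} already splits in the desired way.

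First I would write, using \eqref{defineBWb} and $\phi(q_1q_2W_1W_2)=\phi(q_1W_1)\phi(q_2W_2)$,
\[
B_{W_1W_2,b}^\ast(q_1q_2)=\frac{1}{\phi(q_1W_1)^s\phi(q_2W_2)^s}\sum_{\substack{a=1\\(a,q_1q_2)=1}}^{q_1q_2}S_{W_1W_2,b}^\ast(q_1q_2,a),
\]
and apply Lemma \ref{lemmaGaussmul} to replace $S_{W_1W_2,b}^\ast(q_1q_2,a)$ by $S_{W_1,b}^\ast(q_1,a\overline{q_2})\,S_{W_2,b}^\ast(q_2,a\overline{q_1})$, where $\overline{q_2}$ and $\overline{q_1}$ denote the inverses of $q_2$ modulo $q_1$ and of $q_1$ modulo $q_2$, respectively. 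Here one uses that $S_{W,b}^\ast(q,a)$ depends on $a$ only through its residue class modulo $q$ (and, more basically, that $f(\mathbf{c})$ modulo $q$ is well defined for $\mathbf{c}$ taken modulo $qW$, since $f$ is a quadratic form), so that the arguments $a\overline{q_2}$ and $a\overline{q_1}$ are meaningful.

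Next I would change variables in the $a$-sum. By the Chinese Remainder Theorem the map $a\mapsto(a\bmod q_1,\,a\bmod q_2)$ is a bijection from $(\Z/q_1q_2\Z)^\ast$ onto $(\Z/q_1\Z)^\ast\times(\Z/q_2\Z)^\ast$, and post-composing with multiplication by the units $\overline{q_2}\bmod q_1$ and $\overline{q_1}\bmod q_2$ yields again a bijection onto $(\Z/q_1\Z)^\ast\times(\Z/q_2\Z)^\ast$. Hence
\[
\sum_{\substack{a=1\\(a,q_1q_2)=1}}^{q_1q_2}S_{W_1,b}^\ast(q_1,a\overline{q_2})\,S_{W_2,b}^\ast(q_2,a\overline{q_1})
=\Big(\sum_{\substack{a_1=1\\(a_1,q_1)=1}}^{q_1}S_{W_1,b}^\ast(q_1,a_1)\Big)\Big(\sum_{\substack{a_2=1\\(a_2,q_2)=1}}^{q_2}S_{W_2,b}^\ast(q_2,a_2)\Big),
\]
and dividing through by $\phi(q_1W_1)^s\phi(q_2W_2)^s$ gives $B_{W_1,b}^\ast(q_1)B_{W_2,b}^\ast(q_2)$, which is the assertion.

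There is no genuine obstacle here: the statement is a routine multiplicativity fact in the spirit of classical singular-series computations, and the only points deserving a moment's care are verifying that the Gauss sums are well defined modulo the relevant moduli so that Lemma \ref{lemmaGaussmul} applies, and observing that the hypothesis $(q_1W_1,q_2W_2)=1$ is precisely what is needed both for the CRT decomposition of the $a$-sum and for the factorization $\phi(q_1q_2W_1W_2)=\phi(q_1W_1)\phi(q_2W_2)$ of the denominator.
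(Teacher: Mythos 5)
Your proof is correct and follows exactly the route the paper intends: the paper gives no written proof, stating only that Lemma \ref{lemmaGaussmul} implies the result, and your argument (splitting $\phi(q_1q_2W_1W_2)^{-s}$ by multiplicativity, factoring each Gauss sum via Lemma \ref{lemmaGaussmul}, and decoupling the sum over $a$ by the Chinese Remainder Theorem) is precisely the standard verification being alluded to. No gaps.
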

Next result is a crude upper bound of $B_{W,b}^\ast(q)$.
\begin{lemma}Suppose that $(b,W)=1$. Then we have
\begin{align}\label{BWbbound}B_{W,b}^\ast(q)\ll \frac{W^5}{\phi(W)^s}q^{-\frac{3}{2}+\varepsilon}.\end{align}
\end{lemma}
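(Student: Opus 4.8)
The goal is to bound $B_{W,b}^\ast(q)$ by reducing to the prime-power estimates already available, using the multiplicativity from Lemma~\ref{lemmaBWbmulti}. The plan is as follows. First I would factor $W=\prod_{p\mid W}p^{t_p}$ and $q=\prod_p p^{k_p}$, and then write $q=q_1q_2$ where $q_1=\prod_{p\mid W}p^{k_p}$ collects the primes dividing $W$ and $q_2=\prod_{p\nmid W}p^{k_p}$ collects the rest, so that $(q_2,W)=1$. By Lemma~\ref{lemmaBWbmulti} applied repeatedly, $B_{W,b}^\ast(q)=\big(\prod_{p\mid W}B_{p^{t_p},b}^\ast(p^{k_p})\big)\cdot B_{1,0}^\ast(q_2)$, where the second factor uses that $B_{W,b}^\ast$ with trivial modulus component is just $B^\ast$. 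For the coprime part $q_2$ we already have $B^\ast(q_2)\ll q_2^{-3/2+\varepsilon}$ by Lemma~\ref{lemmaboundBq} together with Lemma~\ref{lemmaBWbmulti}.

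The main work is the local factors at primes $p\mid W$. From the definition \eqref{defineBWb}, $B_{p^t,b}^\ast(p^k)=\phi(p^{t+k})^{-s}\sum_{(a,p)=1,\,1\le a\le p^k}S_{p^t,b}^\ast(p^k,a)$, so using the bound $|S_{p^t,b}^\ast(p^k,a)|\ll p^{sk-\frac52 k+5t+\varepsilon}$ from \eqref{gausssumboundpower} and summing over at most $p^k$ values of $a$, I get
\begin{align*}
B_{p^t,b}^\ast(p^k)\ll \frac{p^k\cdot p^{sk-\frac52 k+5t+\varepsilon}}{\phi(p^{t+k})^{s}}.
\end{align*}
Since $\phi(p^{t+k})=p^{t+k-1}(p-1)\gg p^{t+k-1}$, the denominator is $\gg p^{s(t+k-1)}=p^{s(t+k)}\cdot p^{-s}\gg p^{s(t+k)}$ up to the constant $p^{-s}$ which I can absorb (or keep track of); more carefully $\phi(p^{t+k})^{-s}\ll p^{-s(t+k)}(p/(p-1))^s\ll p^{-s(t+k)}$ with an absolute constant since $p\ge2$. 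Thus
\begin{align*}
B_{p^t,b}^\ast(p^k)\ll p^{k+sk-\frac52 k+5t+\varepsilon - s(t+k)}=p^{-\frac32 k + 5t - st+\varepsilon}=p^{-\frac32 k+\varepsilon}\cdot p^{(5-s)t}.
\end{align*}
Because $s\ge 10$ we have $5-s\le -5<0$, so $p^{(5-s)t}\le 1$ and I even gain; but I want a clean statement matching \eqref{BWbbound}, so I would instead record $B_{p^t,b}^\ast(p^k)\ll p^{-\frac32 k+\varepsilon}\cdot p^{5t}$ (discarding the favorable $p^{-st}$), which is harmless.

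Finally I multiply the local bounds. Collecting over $p\mid W$,
\begin{align*}
\prod_{p\mid W}B_{p^{t_p},b}^\ast(p^{k_p})\ll \prod_{p\mid W}p^{-\frac32 k_p+\varepsilon}p^{5t_p}= q_1^{-\frac32+\varepsilon}\prod_{p\mid W}p^{5t_p}= q_1^{-\frac32+\varepsilon}\,W^5,
\end{align*}
and combining with $B^\ast(q_2)\ll q_2^{-3/2+\varepsilon}$ and $q=q_1q_2$ gives $B_{W,b}^\ast(q)\ll W^5 q^{-3/2+\varepsilon}$. To bring in the $\phi(W)^{-s}$ factor claimed in \eqref{BWbbound}, I would be slightly less wasteful at one prime: in the denominator write $\phi(p^{t+k})^{-s}$ and when $k=0$ (so the prime divides $W$ but not $q$, i.e.\ contributes a factor $B_{p^{t},b}^\ast(1)=\phi(p^t)^{-s}S_{p^t,b}^\ast(1,0)$ — but $S_{p^t,b}^\ast(1,0)$ counts residues mod $p^t$ coprime... actually mod $1\cdot p^t$ with the congruence condition, giving $\phi(p^t)$ terms when translation-invariant is not assumed, or more carefully the definition gives a sum of size $\le \phi(p^t W')$-type), one sees a surviving $\phi(p^t)^{-s}$; assembling these yields the $\phi(W)^{-s}$. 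So the cleanest route is: never throw away the $\phi$ in the denominator, track $B_{p^t,b}^\ast(p^k)\ll \phi(p^t)^{-s}p^{5t}p^{-\frac32 k+\varepsilon}$ directly from \eqref{defineBWb} and \eqref{gausssumboundpower} using $\phi(p^{t+k})\gg \phi(p^t)p^{k-1}$... hm, $\phi(p^{t+k})=\phi(p^t)p^{k}$ exactly when $k\ge1$, and $=\phi(p^t)$ when $k=0$, so in all cases $\phi(p^{t+k})\ge \phi(p^t)\cdot\max(1,p^{k-1})\gg \phi(p^t)p^k\cdot p^{-1}$. Then $B_{p^t,b}^\ast(p^k)\ll \phi(p^t)^{-s}p^{-sk}p^{s}\cdot p^k p^{sk-\frac52k+5t+\varepsilon}=\phi(p^t)^{-s}p^{5t}p^{-\frac32k+s+\varepsilon}$; the extra $p^s$ is bounded only for bounded $p$, which is fine since it occurs for $p\mid W$ and... no, $p$ can be large. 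So instead use $\phi(p^{t+k})\gg \phi(p^t)p^k$ for $k\ge1$ and handle $k=0$ separately where there is no loss. Taking the product over $p\mid W$ of $\phi(p^t)^{-s}$ gives $\phi(W)^{-s}$ and of $p^{5t}$ gives $W^5$, and the product of $p^{-\frac32 k_p+\varepsilon}$ with $q_2^{-3/2+\varepsilon}$ gives $q^{-3/2+\varepsilon}$.

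The main obstacle, such as it is, is purely bookkeeping: keeping the $\phi(W)^{-s}$ and $W^5$ factors exact while splitting the prime $p=2$ or small primes correctly and making sure the divisor-function-sized $\varepsilon$-losses from \eqref{gausssumboundpower} multiply into a single $q^\varepsilon$ (this is standard since $\tau(q)\ll q^\varepsilon$). No genuinely new input beyond Lemmas~\ref{lemmaGausspower}, \ref{lemmaboundBq} and \ref{lemmaBWbmulti} is needed.
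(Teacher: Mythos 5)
Your proposal is correct and follows essentially the same route as the paper: establish the local bound at prime powers $q=p^k$, $W=p^t$ from the Gauss sum estimate \eqref{gausssumboundpower} via the definition \eqref{defineBWb}, then assemble the general case by the multiplicativity in Lemma \ref{lemmaBWbmulti} (with Lemma \ref{lemmaboundBq} covering the primes not dividing $W$). The paper states this in two lines; your version just makes the bookkeeping of the $\phi(W)^{-s}$ and $W^5$ factors explicit, and your final accounting ($\phi(p^{t+k})=\phi(p^t)p^k$ for $k\ge1$, $k=0$ treated separately) is the right way to do it.
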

\begin{proof}By \eqref{gausssumboundpower}, we obtain
  \eqref{BWbbound} in the case $q=p^k$ and $W=p^t$ with $p$ a prime. Then the estimate \eqref{BWbbound} for general $q$
 follows from Lemma \ref{lemmaBWbmulti}. This completes the proof.
\end{proof}
With the estimate \eqref{BWbbound}, we are able to introduce the singular series
\begin{align}\label{definesingularseries}\mathfrak{S}_{f}^\ast(W,b)=\sum_{q=1}^\infty
B_{W,b}^\ast(q),\end{align}
where $B_{W,b}^\ast(q)$ is given in \eqref{defineBWb}.
For each prime $p$, we introduce local densities
\begin{align}\label{definelocaldensity1}\sigma_p^\ast=\sum_{k=0}^\infty
B^\ast(p^k)\end{align}
and
\begin{align}\label{definelocaldensity2}\sigma_p=\sum_{k=0}^\infty
B(p^k).\end{align}
Now we introduce $\mathfrak{S}(W)$, a product of local densities, defined to be
\begin{align}\label{definedensityprod}\mathfrak{S}(W)=\Big(\prod_{p|W}\sigma_p\Big) \Big(\prod_{p\nmid W}\sigma_p^\ast\Big).\end{align}

In order to understand local densities, we point out
\begin{align}\label{limitsigmaast}\sigma_p^\ast=\lim_{k\rightarrow\infty}\,\frac{p^k}{\phi(p^k)^{s}}|\{1\le \mathbf{x}\le p^k: \ (\mathbf{x},p)=1\ \textrm{ and } \
f(\mathbf{x})\equiv 0\pmod{p^k}\}|\end{align}
and
\begin{align}\label{limitsigma}\sigma_p=\lim_{k\rightarrow \infty}\,\frac{1}{p^{k(s-1)}}|\{1\le \mathbf{x}\le p^k:  \
f(\mathbf{x})\equiv 0\pmod{p^k}\}|.\end{align}

\begin{lemma}\label{lemmatwodensity}Suppose that $f$ is translation invariant. Then we have
\begin{align*}\sigma_p^\ast \ge \frac{p^{2}}{\phi(p)^s}\sigma_p.\end{align*}
\end{lemma}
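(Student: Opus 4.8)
The plan is to work with the finite local counts underlying the limit formulas \eqref{limitsigmaast} and \eqref{limitsigma}. Put $N^\ast(p^k)=\big|\{\mathbf{x}\bmod p^k:\ (\mathbf{x},p)=1,\ f(\mathbf{x})\equiv 0\pmod{p^k}\}\big|$ and $N(p^k)=\big|\{\mathbf{x}\bmod p^k:\ f(\mathbf{x})\equiv 0\pmod{p^k}\}\big|$, and let $N_0(p^k)$ denote the number of solutions of the latter kind subject to the extra constraint $\mathbf{x}\equiv\mathbf{0}\pmod{p}$. The key point is that translation invariance, i.e. $f(\mathbf{x})=f(\mathbf{x}-r\mathbf{1})$ identically in $\mathbf{x}$ and $r$, lets one manufacture solutions coprime to $p$ out of $N_0(p^k)$: for each $r\in\{1,\dots,p-1\}$ the shift $\mathbf{x}\mapsto\mathbf{x}-r\mathbf{1}$ is a bijection from $\{\mathbf{x}\bmod p^k:\ f(\mathbf{x})\equiv 0\pmod{p^k},\ \mathbf{x}\equiv r\mathbf{1}\pmod{p}\}$ onto $\{\mathbf{x}\bmod p^k:\ f(\mathbf{x})\equiv 0\pmod{p^k},\ \mathbf{x}\equiv\mathbf{0}\pmod{p}\}$. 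The $p-1$ source sets are pairwise disjoint and consist entirely of tuples coprime to $p$, so $N^\ast(p^k)\ge (p-1)N_0(p^k)$. On the other hand, substituting $x_i=py_i$ and using $f(p\mathbf{y})=p^2f(\mathbf{y})$ gives $N_0(p^k)=p^{s}N(p^{k-2})$ for $k\ge 2$.

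With these two identities I would pass to the limit. By \eqref{limitsigmaast} and the first identity, $\sigma_p^\ast=\lim_{k\to\infty}\frac{p^k}{\phi(p^k)^s}N^\ast(p^k)\ge (p-1)p^{s}\lim_{k\to\infty}\frac{p^k}{\phi(p^k)^s}N(p^{k-2})$. Using $\phi(p^k)=p^{k-1}(p-1)$ one checks that $\frac{p^k}{\phi(p^k)^s}\,p^{(k-2)(s-1)}=\frac{p^{2-s}}{(p-1)^s}$, a constant independent of $k$; hence by \eqref{limitsigma}, $\lim_{k\to\infty}\frac{p^k}{\phi(p^k)^s}N(p^{k-2})=\frac{p^{2-s}}{(p-1)^s}\,\sigma_p$. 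Therefore $\sigma_p^\ast\ge (p-1)p^{s}\cdot\frac{p^{2-s}}{(p-1)^s}\,\sigma_p=\frac{p^{2}}{(p-1)^{s-1}}\,\sigma_p$. Since $p\ge 2$ forces $(p-1)^{s-1}\le (p-1)^{s}=\phi(p)^s$, and $\sigma_p\ge 0$, this yields $\sigma_p^\ast\ge \frac{p^2}{\phi(p)^s}\,\sigma_p$, which is the claim.

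I do not anticipate a genuine obstacle here: the argument is combinatorial bookkeeping once one sees that the ``diagonally shifted'' residue patterns $\mathbf{x}\equiv r\mathbf{1}\pmod p$ are the ones to exploit. The only delicate point is matching normalizations — $\sigma_p^\ast$ carries the weight $p^k/\phi(p^k)^s$ whereas $\sigma_p$ carries $p^{-k(s-1)}$ — and one must verify that the factor $p^{s}$ produced by $N_0(p^k)=p^{s}N(p^{k-2})$ exactly absorbs the discrepancy, which it does up to the harmless ratio $(p-1)^s/(p-1)^{s-1}$ that turns the underlying identity into the stated inequality. It is worth noting that, beyond whatever is needed to know that $\sigma_p^\ast$ and $\sigma_p$ are finite, only translation invariance of $f$ enters the argument.
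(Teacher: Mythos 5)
Your proof is correct and is essentially the paper's argument: the paper's single substitution $\mathbf{x}=\mathbf{1}+p\mathbf{y}$ is exactly your shift by $r\mathbf{1}$ (with $r=1$) composed with your scaling identity $N_0(p^k)=p^{s}N(p^{k-2})$, and the normalization bookkeeping matches what the paper does. The only difference is that by summing over all $p-1$ diagonal residue classes you actually obtain the slightly stronger bound $\sigma_p^\ast\ge \frac{p^{2}(p-1)}{\phi(p)^s}\sigma_p$ before discarding the harmless extra factor $p-1$.
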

\begin{proof}Let $k\ge 1$. We consider the solutions to $f(\mathbf{x})\equiv 0\pmod{p^{k+2}}$ with $\mathbf{x}$ in the form
$$\mathbf{x}=\mathbf{1}+p\mathbf{y}\ (1\le \mathbf{y}\le p^{k+1}),$$
and deduce that
\begin{align*}&|\{1\le \mathbf{x}\le p^{k+2}: \ (\mathbf{x},p)=1\ \textrm{ and } \
F(\mathbf{x})\equiv 0\pmod{p^{k+2}}\}|
\\ \ge\, & |\{1\le \mathbf{y}\le p^{k+1}: \
F(\mathbf{y})\equiv 0\pmod{p^{k}}\}|
\\ =\ & p^{s}|\{1\le \mathbf{y}\le p^{k}: \
F(\mathbf{y})\equiv 0\pmod{p^{k}}\}|.\end{align*}
Then it is easy to see
\begin{align*}&\frac{p^{k+2}}{\phi(p^{k+2})^{s}}|\{1\le \mathbf{x}\le p^{k+2}: \ (\mathbf{x},p)=1\ \textrm{ and } \
F(\mathbf{x})\equiv 0\pmod{p^{k+2}}\}|
\\ \ge &\frac{p^{k+2+s}}{\phi(p^{k+2})^{s}}|\{1\le \mathbf{y}\le p^{k}: \
F(\mathbf{y})\equiv 0\pmod{p^{k}}\}|
\\ = &  \frac{p^{2}}{\phi(p)^s} \cdot \frac{1}{p^{(s-1)k}}|\{1\le \mathbf{y}\le p^{k}: \
F(\mathbf{y})\equiv 0\pmod{p^{k}}\}|.\end{align*}
We conclude from \eqref{limitsigmaast} and \eqref{limitsigma} that $\sigma_p^\ast\ge \frac{p^{2}}{\phi(p)^s}\sigma_p$.
\end{proof}

\begin{lemma}\label{lemmasingularseries}Let $(b,W)=1$. Suppose that $f$ is translation invariant. Then we have\begin{align}\label{representsingularseries}\mathfrak{S}_{f}^\ast(W,b)=\frac{W^2}{\phi(W)^s}\mathfrak{S}(W),\end{align}
where $\mathfrak{S}(W)$ is given in \eqref{definedensityprod}. Moreover, there exists a positive number $C(f)$, independent of $W$, such that
\begin{align}\label{densityprodlower}\mathfrak{S}(W)>C(f)>0.\end{align}
\end{lemma}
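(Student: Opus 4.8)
The plan is to prove the factorization \eqref{representsingularseries} by exploiting multiplicativity, and then to prove the lower bound \eqref{densityprodlower} by bounding each local factor from below and controlling the tail of the Euler product. First I would observe that by Lemma \ref{lemmaBWbmulti}, the arithmetic function $q\mapsto B_{W,b}^\ast(q)$ is multiplicative in the sense that if $(q_1W_1,q_2W_2)=1$ then $B_{W_1W_2,b}^\ast(q_1q_2)=B_{W_1,b}^\ast(q_1)B_{W_2,b}^\ast(q_2)$. Writing $W=\prod_{p\mid W}p^{t_p}$ and using that for each prime $p$ the local sum $\sum_{k\ge 0}B_{W,b}^\ast(p^k)$ depends on $W$ only through the $p$-part $p^{t_p}$, the absolutely convergent series \eqref{definesingularseries} factorizes as
\begin{align*}\mathfrak{S}_f^\ast(W,b)=\prod_{p\mid W}\Big(\sum_{k\ge 0}B_{p^{t_p},b}^\ast(p^k)\Big)\prod_{p\nmid W}\Big(\sum_{k\ge 0}B^\ast(p^k)\Big).\end{align*}
The absolute convergence needed to justify the rearrangement comes from \eqref{BWbbound} (for the finitely many $p\mid W$) and from Lemma \ref{lemmaboundBq} (for $p\nmid W$, giving $B^\ast(p^k)\ll p^{-3k/2+\varepsilon}$, summable and with convergent product).

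Next I would compute the local factor at a prime $p\mid W$ with exponent $t=t_p\ge 1$, using the translation-invariant evaluation of $S_{p^t,b}^\ast(p^k,a)$ from Lemma \ref{lemmaGausspower}. For $0\le k\le 2t$ we have $S_{p^t,b}^\ast(p^k,a)=p^{sk}$, so $B_{p^t,b}^\ast(p^k)=\phi(p^k W/p^t\cdot p^t)^{-s}$... more carefully, $B_{p^t,b}^\ast(p^k)=\phi(p^{k+t})^{-s}\sum_{(a,p^k)=1}p^{sk}$; one sums the geometric-type contributions over $0\le k\le 2t$, and for $k>2t$ one gets $p^{2st}S(p^{k-2t},a)$, which reassembles into $p^{2st}\sum_{j\ge 0}B(p^j)\cdot(\text{normalization})$. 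Carrying out this bookkeeping, the $k\le 2t$ block should telescope so that the whole local factor at $p\mid W$ equals $\dfrac{p^{2t}}{\phi(p^t)^s}\,\sigma_p$, where $\sigma_p$ is the density in \eqref{definelocaldensity2} (this is the local analogue of the inequality in Lemma \ref{lemmatwodensity}, but here it is an exact identity because translation invariance makes $f(\mathbf 1+p\mathbf y)=p^2 f(\mathbf y)$ precisely). Multiplying over $p\mid W$ gives $\prod_{p\mid W}\dfrac{p^{2t_p}}{\phi(p^{t_p})^s}=\dfrac{W^2}{\phi(W)^s}\prod_{p\mid W}\dfrac{1}{\text{(ratio)}}$; combined with $\prod_{p\nmid W}\sigma_p^\ast$ this is exactly $\dfrac{W^2}{\phi(W)^s}\mathfrak S(W)$ with $\mathfrak S(W)$ as in \eqref{definedensityprod}, proving \eqref{representsingularseries}.

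For the lower bound \eqref{densityprodlower}, I would show $\mathfrak S(W)$ is bounded below by a constant independent of $W$. By Lemma \ref{lemmatwodensity}, for $p\mid W$ we have $\sigma_p\le \dfrac{\phi(p)^s}{p^2}\sigma_p^\ast\le\sigma_p^\ast$, so replacing each $\sigma_p$ ($p\mid W$) by $\sigma_p^\ast$ only increases... no — we need a lower bound on $\mathfrak S(W)$, so I would instead bound $\sigma_p$ from below directly. Since $f$ is indefinite with $\rankoff(f)\ge 5$, Hensel-type lifting shows each local factor is positive; quantitatively, using the Gauss-sum bounds $B(p^k)\ll p^{-3k/2+\varepsilon}$ and $B^\ast(p^k)\ll p^{-3k/2+\varepsilon}$ from Lemma \ref{lemmaboundBq}, one has $\sigma_p=1+O(p^{-3/2+\varepsilon})$ and $\sigma_p^\ast=1+O(p^{-3/2+\varepsilon})$ for all but finitely many $p$, so both the partial product over large primes and the "defect" factors $\prod_{p\mid W}(p^2/\phi(p)^s)^{-1}\cdots$ converge; for the finitely many small primes one checks $\sigma_p>0$ and $\sigma_p^\ast>0$ individually (the indefinite condition guarantees nonsingular local solutions, hence nonvanishing $p$-adic densities). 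Taking $C(f)$ to be a constant times $\inf_W$ over the (bounded, bounded-away-from-zero) family of finite products yields \eqref{densityprodlower}. The main obstacle is the exact evaluation of the local factor at $p\mid W$: one must carefully handle the ranges $0\le k\le 2t$ and $k>2t$ in Lemma \ref{lemmaGausspower}, track the $\phi(p^{k+t})^{-s}$ normalizations, and verify that the sum over the first range does not contribute a spurious factor — this is where translation invariance is used in an essential, non-inequality way.
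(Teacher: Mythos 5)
Your factorization argument and the evaluation of the local factors at $p\mid W$ are essentially the paper's own proof: multiplicativity (Lemma \ref{lemmaBWbmulti}) reduces $\mathfrak{S}_f^\ast(W,b)$ to an Euler product, Lemma \ref{lemmaGausspower} gives $B^\ast_{p^t,b}(p^k)=\phi(p^k)\phi(p^t)^{-s}$ for $0\le k\le 2t$ and $p^{2t}\phi(p^t)^{-s}B(p^{k-2t})$ for $k>2t$, and the identity $\sum_{k=0}^{2t}\phi(p^k)=p^{2t}$ makes the local factor equal to $p^{2t}\phi(p^t)^{-s}\sigma_p$, whence \eqref{representsingularseries}; your extra ``$\prod_{p\mid W}1/(\text{ratio})$'' is spurious, since $\prod_{p^t\|W}p^{2t}\phi(p^t)^{-s}=W^2\phi(W)^{-s}$ exactly. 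The tail of the product in \eqref{densityprodlower} is likewise handled as in the paper via Lemma \ref{lemmaboundBq}, and the uniformity of $C(f)$ in $W$ comes out the same way.

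The one genuine gap is your treatment of the small primes. For $p\nmid W$ the relevant factor is $\sigma_p^\ast$, which by \eqref{limitsigmaast} is a density of solutions of $f(\mathbf{x})\equiv 0\pmod{p^k}$ with \emph{all coordinates coprime to $p$}. The existence of a nonsingular $p$-adic zero (which $\rank(M)\ge 5$ does provide, and which gives $\sigma_p>0$) does not by itself produce a zero with unit coordinates, so your parenthetical ``the indefinite condition guarantees nonsingular local solutions, hence nonvanishing $p$-adic densities'' does not justify $\sigma_p^\ast>0$; moreover indefiniteness plays no role at the finite places. For a general form this is precisely the delicate point. The paper closes it with Lemma \ref{lemmatwodensity}: translation invariance gives $\sigma_p^\ast\ge \frac{p^2}{\phi(p)^s}\sigma_p$, reducing everything to the classical fact $\sigma_p>0$ for quadratic forms of rank at least $5$. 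You quoted that lemma, but only tried to use it in the (useless) upper-bound direction and then discarded it; reinstating it in this direction repairs the argument and makes your proof coincide with the paper's.
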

\begin{proof}By Lemma \ref{lemmaBWbmulti}, we have
\begin{align*}\mathfrak{S}_{f}^\ast(W,b)=\prod_{p^t\|W}\sum_{k=0}^\infty B^\ast_{p^t,b}(p^k),\end{align*}
where the product is taken over all primes, that is, $t$ maybe equals zero.

In the case $p\nmid W$ (which implies $t=0$), we apply Lemma \ref{lemmaGausspower} to deduce that
\begin{align*}\sum_{k=0}^\infty B^\ast_{p^t,b}(p^k)=\sigma_p^\ast.\end{align*}
Therefore, in order to prove \eqref{representsingularseries}, it remains to show for $p|W$ (which implies $t\ge 1$), we have
\begin{align*}\sum_{k=0}^\infty B^\ast_{p^t,b}(p^k)=\frac{p^{2t}}{\phi(p^t)^s}\sigma_p.\end{align*}
We apply Lemma \ref{lemmaGausspower} again to obtain
\begin{align*}B_{p^t,b}(p^k)=\begin{cases}
 \phi(p^k)\phi(p^{t})^{-s} \ \ \ &\textrm{ if }\ t\ge 1\ \textrm{ and }\ 0\le k\le 2t,
\\ p^{2t}\phi(p^{t})^{-s}B(p^{k-2t}) \ \ \ &\textrm{ if }\ t\ge 1\ \textrm{ and }\ k>2t.\end{cases}\end{align*}
Then we deduce that
\begin{align*}\sum_{k=0}^\infty B^\ast_{p^t,b}(p^k)=\, &\sum_{k=0}^{2t}
\frac{\phi(p^k)}{\phi(p^{t})^{s}} +\sum_{k=2t+1}^{\infty}\frac{p^{2t}}{\phi(p^{t})^{s}}B(p^{k-2t})
\\ =\, & \frac{p^{2t}}{\phi(p^{t})^{s}}+\frac{p^{2t}}{\phi(p^{t})^{s}}\sum_{k=2t+1}^{\infty}B(p^{k-2t})=\frac{p^{2t}}{\phi(p^t)^s}\sigma_p.\end{align*}
This completes the proof of \eqref{representsingularseries}.

By Lemma \ref{lemmaboundBq}, we have
\begin{align*}\sigma_p^\ast =1+O(p^{-\frac{3}{2}+\varepsilon}) \ \ \textrm{ and } \ \ \sigma_p=1+O(p^{-\frac{3}{2}+\varepsilon}).\end{align*}
Therefore, there exists a natural number $N_f>0$ such that
\begin{align*}\Big(\prod_{\substack{p|W
\\ p\ge N_f}}\sigma_p\Big) \Big(\prod_{\substack{p\nmid W\\ p\ge N_f}}\sigma_p^\ast\Big)>C_1(f)\end{align*}
for some $C_1(f)>0$. Now in order to prove \eqref{densityprodlower}, we need to verify
$\sigma_p>0$ and $\sigma_p^\ast>0$ for all $p< N_f$. By Lemma \ref{lemmatwodensity}, we only need to show $\sigma_p>0$ for all $p< N_f$.
It is well-known that if $f$ is a quadratic form with $\rank(M)\ge 5$, then $\sigma_p>0$ all prime $p$. The proof is completed.
\end{proof}

The study of the singular integral is easier because the singular integral in our paper in the same as
that in the corresponding problem with integral variables.  We define
\begin{align}\label{definecalKint}\mathcal{K}(\beta;I)=\int_{I^s} e\big(\beta f(\mathbf{x})\big)d\mathbf{x}.\end{align}
Subject to the condition $\rank(M)\ge 5$, we have
\begin{align}\label{boundcalK}\mathcal{K}(\beta;I)\ll Y^s(1+Y^2|\beta|)^{-2}.\end{align}
Then we define the singular integral
\begin{align}\label{definesingularint}\mathfrak{K}_f(I)=\int_{-\infty}^{+\infty} \mathcal{K}(\beta;I)d\beta.\end{align}
We also define
\begin{align}\label{definesingularinttran}\mathfrak{I}_f=\int_{-\infty}^{+\infty}\Big( \int_{(0,1]^s} e\big(\beta f(\mathbf{x})\big)d\mathbf{x}\Big)d\beta.\end{align}

The following lemma can be proved easily by changing variables.
\begin{lemma}\label{lemmasingularint}
Suppose that $f$ is translation invariant. Then we have
\begin{align}\mathfrak{K}_f(I)=Y^{s-2}\mathfrak{I}_f.\end{align}
Moreover, if $f$ is indefinite then $\mathfrak{I}_f>0$.
\end{lemma}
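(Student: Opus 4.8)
The plan is to deduce the identity $\mathfrak{K}_f(I)=Y^{s-2}\mathfrak{I}_f$ from a linear change of variables, and to obtain $\mathfrak{I}_f>0$ from the classical interpretation of a singular integral as a normalised volume. For the identity I would substitute $x_j=Y_0+Yu_j$, i.e.\ $\mathbf{x}=Y_0\mathbf{1}+Y\mathbf{u}$ with $\mathbf{u}$ ranging over $(0,1]^s$, in \eqref{definecalKint}. Since $f$ is translation invariant, $\mathbf{1}M=\mathbf{0}$, and therefore
\[f(Y_0\mathbf{1}+Y\mathbf{u})=Y_0^2\,\mathbf{1}M\mathbf{1}^{T}+2Y_0Y\,\mathbf{1}M\mathbf{u}^{T}+Y^2f(\mathbf{u})=Y^2f(\mathbf{u}).\]
Combined with $d\mathbf{x}=Y^s\,d\mathbf{u}$ this yields $\mathcal{K}(\beta;I)=Y^s\int_{(0,1]^s}e(\beta Y^2f(\mathbf{u}))\,d\mathbf{u}$, and substituting $\gamma=Y^2\beta$ in \eqref{definesingularint} then produces exactly $Y^{s-2}\mathfrak{I}_f$. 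The one point deserving comment is that these manipulations are legitimate: by \eqref{boundcalK}, which holds since $\rankoff(f)\ge5$ implies $\rank(M)\ge5$, both $\int_{\R}\mathcal{K}(\beta;I)\,d\beta$ and the same integral with $Y=1$ converge absolutely, so Fubini's theorem and the rescaling of $\beta$ are valid.

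For the positivity of $\mathfrak{I}_f$ I would invoke the standard smoothing device. For $T>0$, Fubini together with $\int_{-T}^{T}e(\gamma t)\,d\gamma=\sin(2\pi Tt)/(\pi t)$ gives
\[\int_{-T}^{T}\Big(\int_{(0,1]^s}e(\gamma f(\mathbf{u}))\,d\mathbf{u}\Big)d\gamma=\int_{(0,1]^s}\frac{\sin(2\pi Tf(\mathbf{u}))}{\pi f(\mathbf{u})}\,d\mathbf{u};\]
letting $T\to\infty$ and using the $Y=1$ case of \eqref{boundcalK} to dominate, one identifies $\mathfrak{I}_f$ with $\lim_{\eta\to0^+}\tfrac{1}{2\eta}\,\mathrm{vol}\{\mathbf{u}\in(0,1]^s:|f(\mathbf{u})|<\eta\}$. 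In particular $\mathfrak{I}_f\ge0$, so it suffices to produce a nonsingular real zero $\mathbf{c}$ of $f$ inside the open cube $(0,1)^s$: near such a point one may take $f$ itself as one of the local coordinates by the implicit function theorem, whence $\{\mathbf{u}:|f(\mathbf{u})|<\eta\}$ has volume $\gg\eta$ in a fixed neighbourhood of $\mathbf{c}$, and this gives a strictly positive lower bound for $\mathfrak{I}_f$.

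Locating such a $\mathbf{c}$ is where indefiniteness enters. Since $f$ is indefinite with $\rank(M)\ge5$, the real hypersurface $\{f=0\}$ has dimension $s-1\ge9$, whereas its singular locus $\{\mathbf{x}:M\mathbf{x}^{T}=\mathbf{0}\}$ has dimension $s-\rank(M)\le s-5$; hence $f$ possesses a nonsingular real zero $\mathbf{c}_0$. By translation invariance $\mathbf{c}_0+t\mathbf{1}$ is again a nonsingular zero for every $t\in\R$, and by homogeneity so is $\lambda(\mathbf{c}_0+t\mathbf{1})$ for every $\lambda>0$; taking $t$ large enough that all coordinates become positive and then $\lambda$ small enough lands the zero in $(0,1)^s$. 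I expect the genuinely technical step to be the second paragraph, namely making rigorous the passage from the oscillatory definition of $\mathfrak{I}_f$ to its geometric meaning as a surface integral $\int_{f=0}|\nabla f|^{-1}\,d\sigma$ over the part of the quadric in the cube, together with the strict positivity of the local contribution near $\mathbf{c}$; this is, however, entirely classical for nonsingular forms, and I would cite the standard treatment rather than reprove it. The first identity, by contrast, is purely formal once the absolute convergence furnished by \eqref{boundcalK} has been noted.
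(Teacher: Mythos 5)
Your proof is correct and follows exactly the route the paper intends: the paper merely remarks that the identity ``can be proved easily by changing variables,'' which is your affine substitution $\mathbf{x}=Y_0\mathbf{1}+Y\mathbf{u}$ (using $\mathbf{1}M=\mathbf{0}$) followed by $\gamma=Y^2\beta$, with absolute convergence supplied by \eqref{boundcalK}. The positivity of $\mathfrak{I}_f$ is left implicit in the paper as a standard fact about singular integrals of indefinite forms of rank at least $5$, and your sketch (nonsingular real zero moved into $(0,1)^s$ via translation invariance and homogeneity, plus the classical volume interpretation) is precisely that standard argument.
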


We define
\begin{align}\label{defineTWq}R_{W,b}^\ast(q,a,h)=\sum_{\substack{1\le c\le qW
\\ (c,q)=1 \\ c\equiv b\pmod{W}}}e\big(\frac{ach}{qW}\big).\end{align}
\begin{lemma}\label{lemmaTmul}Suppose that $(W_1q_1,W_2q_2)=1$. Then we have
\begin{align*}R^\ast_{W_1W_2,b}(q_1q_2,a,h)=R^\ast_{W_1,b}(q_1,a\overline{q_2W_2},h)R^\ast_{W_2,b}(q_2,a\overline{q_1W_1},h).\end{align*}
\end{lemma}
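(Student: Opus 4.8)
The plan is to mimic the proof of Lemma \ref{lemmaGaussmul}, carrying out a Chinese Remainder Theorem change of variables in the single summation defining $R^\ast_{W_1W_2,b}(q_1q_2,a,h)$. Since $(q_1W_1,q_2W_2)=1$, every residue $c$ modulo $q_1q_2W_1W_2$ can be written uniquely as
\[
c=c_1\,\overline{q_2W_2}\,q_2W_2+c_2\,\overline{q_1W_1}\,q_1W_1,
\]
where $\overline{q_2W_2}$ denotes the inverse of $q_2W_2$ modulo $q_1W_1$, $\overline{q_1W_1}$ the inverse of $q_1W_1$ modulo $q_2W_2$, and $c_1$ ranges over a complete residue system modulo $q_1W_1$ while $c_2$ ranges over a complete residue system modulo $q_2W_2$. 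First I would record this bijection and observe that the summation over $c$ thereby factors into a double summation over $c_1$ and $c_2$.

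Next I would check that the two side conditions decouple. Reducing the displayed expression for $c$ modulo $q_1W_1$ kills the second summand and, since $\overline{q_2W_2}\,q_2W_2\equiv1\pmod{q_1W_1}$, leaves $c\equiv c_1\pmod{q_1W_1}$; likewise $c\equiv c_2\pmod{q_2W_2}$. Hence, using $(W_1,W_2)=1$, the congruence $c\equiv b\pmod{W_1W_2}$ is equivalent to the pair $c_1\equiv b\pmod{W_1}$ and $c_2\equiv b\pmod{W_2}$, and, using $(q_1,q_2)=1$, the coprimality condition $(c,q_1q_2)=1$ is equivalent to $(c_1,q_1)=1$ together with $(c_2,q_2)=1$.

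Then I would split the exponential. From the expression for $c$ one has
\[
\frac{ch}{q_1q_2W_1W_2}=\frac{\overline{q_2W_2}\,c_1h}{q_1W_1}+\frac{\overline{q_1W_1}\,c_2h}{q_2W_2},
\]
so that $e\big(\frac{ach}{q_1q_2W_1W_2}\big)=e\big(\frac{(a\overline{q_2W_2})c_1h}{q_1W_1}\big)\,e\big(\frac{(a\overline{q_1W_1})c_2h}{q_2W_2}\big)$. Substituting this and the decoupled conditions into the double sum, the sum over $c_1$ is exactly $R^\ast_{W_1,b}(q_1,a\overline{q_2W_2},h)$ and the sum over $c_2$ is exactly $R^\ast_{W_2,b}(q_2,a\overline{q_1W_1},h)$, which gives the claim.

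There is no serious obstacle here; the only point requiring a little care is bookkeeping the moduli. In the definition \eqref{defineTWq} the denominator is $qW$ rather than $q$, so the relevant inverses must be taken modulo the full products $q_1W_1$ and $q_2W_2$, and one must verify that the first argument $a$ transforms into $a\overline{q_2W_2}$ and $a\overline{q_1W_1}$ respectively, consistent with the statement.
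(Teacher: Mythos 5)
Your proof is correct and is exactly the argument the paper intends: the paper itself omits the details, saying only that one argues as in Lemma \ref{lemmaGaussmul}, and your Chinese Remainder Theorem substitution $c=c_1\overline{q_2W_2}\,q_2W_2+c_2\overline{q_1W_1}\,q_1W_1$ with the decoupling of the congruence and coprimality conditions and the splitting of the exponential is precisely that argument, including the correct observation that the inverses must be taken modulo the full moduli $q_1W_1$ and $q_2W_2$ because the denominator in \eqref{defineTWq} is $qW$.
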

\begin{proof}We can confirm the desired conclusion by a similar argument in the proof of Lemma \ref{lemmaGaussmul}.  We omit the details.
\end{proof}

\begin{lemma}\label{lemmaTpower}Suppose that $p$ is a prime, $k\ge 0$ and $t\ge 0$. Suppose that $(ab,p)=1$. Then we have
\begin{align}\label{Testimate}|R^\ast_{p^t,b}(p^k,a,h)|\le (h,p^{k}).\end{align}
\end{lemma}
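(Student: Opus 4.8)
The plan is to reduce to a clean Gauss-sum-type estimate on an arithmetic progression modulo $p^kW$ where $W=p^t$, and then exploit the explicit structure of the summation condition $c\equiv b\pmod{p^t}$ with $(c,p)=1$. First I would substitute $c=b+p^t x$ (writing $W=p^t$), so that as $c$ runs over residues modulo $p^{k+t}$ with $c\equiv b\pmod{p^t}$, the variable $x$ runs over a complete residue system modulo $p^k$; the condition $(c,p)=1$ becomes automatic since $(b,p)=1$ (using $(ab,p)=1$). This turns $R^\ast_{p^t,b}(p^k,a,h)$ into $e\!\left(\frac{abh}{p^{k+t}}\right)$ times a sum of $e\!\left(\frac{ap^t x h}{p^{k+t}}\right)=e\!\left(\frac{axh}{p^{k}}\right)$ over $x$ modulo $p^k$. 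Thus, up to a unimodular factor, $R^\ast_{p^t,b}(p^k,a,h)$ equals a complete geometric sum $\sum_{x=1}^{p^k} e\!\left(\frac{ahx}{p^k}\right)$.

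Next I would evaluate this geometric sum exactly: it equals $p^k$ if $p^k\mid ah$ and $0$ otherwise. Since $(a,p)=1$, the condition $p^k\mid ah$ is equivalent to $p^k\mid h$, i.e. to $(h,p^k)=p^k$. In that case the sum has absolute value $p^k=(h,p^k)$; in the complementary case the sum is $0\le(h,p^k)$. Either way the bound $|R^\ast_{p^t,b}(p^k,a,h)|\le (h,p^k)$ holds, which is the claim. (In fact this argument yields the slightly sharper statement that $|R^\ast_{p^t,b}(p^k,a,h)|$ equals $p^k$ or $0$ according as $p^k\mid h$ or not, but the stated inequality suffices for the applications.)

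I do not anticipate a serious obstacle here: the only point requiring a little care is checking that the change of variables $c=b+p^tx$ genuinely gives a bijection between $\{1\le c\le p^{k+t}:\ c\equiv b\pmod{p^t}\}$ and a complete residue system modulo $p^k$, and that the coprimality condition $(c,p)=1$ is vacuous after the substitution because $(b,p)=1$. Once that is in place the proof is a one-line geometric-sum evaluation. Should one wish to avoid the substitution, an alternative is to note directly that the characters $c\mapsto e(ach/(p^kW))$ restricted to the progression $c\equiv b\pmod W$ form a single character of the cyclic group $(\mathbb{Z}/p^k\mathbb{Z})$ after translation, and orthogonality gives the same dichotomy; but the substitution route is the cleanest.
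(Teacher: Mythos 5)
Your substitution argument is exactly the paper's treatment of the case $t\ge 1$, but it breaks down at $t=0$, which the lemma also asserts. When $t=0$ the modulus $W=p^t$ equals $1$, so the congruence $c\equiv b\pmod{p^t}$ is vacuous and the coprimality condition $(c,p^k)=1$ is \emph{not} automatic: the sum is $\sum_{1\le c\le p^k,\,(c,p)=1}e\bigl(\frac{ach}{p^k}\bigr)$, a Ramanujan sum, not a complete geometric sum. Your parenthetical ``sharper statement'' that the sum is always $p^k$ or $0$ is in fact false there: for $t=0$, $k=1$ and $p\nmid h$ the Ramanujan sum equals $-1$. The stated inequality still holds in this case, by the standard evaluation $c_{p^k}(h)=p^{k-1}(p-1)$, $-p^{k-1}$, or $0$ according as $p^k\mid h$, $p^{k-1}\,\|\,h$, or otherwise, each of which is at most $(h,p^k)$ in absolute value --- and this is precisely how the paper handles $t=0$, splitting off that case and quoting the Ramanujan sum bound before running your change of variables for $t\ge 1$. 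So the gap is localized and easily repaired, but as written your proof does not cover $t=0$, and the uniform claim you make about the coprimality condition being vacuous is incorrect in that case.
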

\begin{proof}We first consider $t=0$, and in this case we have
\begin{align*}R^\ast_{p^t,b}(p^k,a,h)=\sum_{\substack{1\le c\le p^k
\\ (c,p^k)=1 }}e\big(\frac{ach}{p^k}\big).\end{align*}
This is a Ramanujan sum, and we obtain \eqref{Testimate}.
If $t\ge 1$, then we deduce by changing variables $c=b+up^t$ that
\begin{align*}R^\ast_{p^t,b}(p^k,a,h)=\sum_{\substack{1\le u\le p^{k}
 }}e\big(\frac{a(b+up^t)h}{p^{k+t}}\big)=
 e\big(\frac{abh}{p^{k+t}}\big)\sum_{\substack{1\le u\le p^{k}
 }}e\big(\frac{auh}{p^{k}}\big).\end{align*}
Thus, we have $R^\ast_{p^t,b}(p^k,a,h)= e\big(\frac{abh}{p^{k+t}}\big)p^{k}$ or $0$ according to $p^{k}|h$ or not. In particular, the inequality \eqref{Testimate} holds. This completes the proof.
\end{proof}

\begin{lemma}\label{lemmaboundTWq}Suppose that $(W,b)=(q,a)=1$. Then we have
\begin{align*}|R^\ast_{W,b}(q,a,h)|\le (h,q).\end{align*}
\end{lemma}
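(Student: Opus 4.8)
The statement to prove is Lemma~\ref{lemmaboundTWq}: if $(W,b)=(q,a)=1$, then $|R^\ast_{W,b}(q,a,h)|\le (h,q)$.

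The plan is to reduce the general case to the prime-power case already handled in Lemma~\ref{lemmaTpower}, using the multiplicativity provided by Lemma~\ref{lemmaTmul}. First I would factor $W=\prod_{p^{t_p}\|W}p^{t_p}$ and $q=\prod_{p^{k_p}\|q}p^{k_p}$, and observe that since $(W,q)=1$, no prime divides both $W$ and $q$; grouping primes together, a repeated application of Lemma~\ref{lemmaTmul} yields a factorization
\begin{align*}R^\ast_{W,b}(q,a,h)=\prod_{p}R^\ast_{p^{t_p},b}\big(p^{k_p},a_p,h\big)\end{align*}
for suitable integers $a_p$ with $(a_p,p)=1$, where the product runs over all primes (with $t_p=0$ when $p\nmid W$ and $k_p=0$ when $p\nmid q$; note $R^\ast_{p^0,b}(p^0,\cdot,\cdot)=1$). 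Here I need $(a,q)=1$ to guarantee that each local multiplier $a_p$ is a unit modulo $p$, which is exactly the hypothesis on $a$ and $q$, and $(b,W)=1$ to keep the congruence condition $c\equiv b\pmod W$ consistent across the local pieces, as in the proof of Lemma~\ref{lemmaGaussmul}.

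Next I would apply Lemma~\ref{lemmaTpower} to each factor to get $|R^\ast_{p^{t_p},b}(p^{k_p},a_p,h)|\le (h,p^{k_p})$. Multiplying these bounds over all primes gives
\begin{align*}|R^\ast_{W,b}(q,a,h)|\le \prod_{p}(h,p^{k_p})=(h,q),\end{align*}
the last equality because the gcd is multiplicative over the coprime prime-power decomposition $q=\prod_p p^{k_p}$. This completes the argument.

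There is essentially no hard step here: the lemma is a routine bootstrapping of the prime-power estimate via the multiplicativity relation, exactly parallel to how Lemma~\ref{lemmaBWbmulti} was deduced from Lemma~\ref{lemmaGaussmul}. The only point requiring a little care is the bookkeeping of the local multipliers $a_p=a\cdot\overline{(q/p^{k_p})W_p'}$ (products of inverses modulo $p^{k_p}$) and checking they remain coprime to $p$ — which is immediate from $(a,q)=1$ — so that Lemma~\ref{lemmaTpower} genuinely applies. Given the heavy reuse of this pattern, I would expect the paper to state the proof in one or two sentences, invoking Lemma~\ref{lemmaTmul} and Lemma~\ref{lemmaTpower} and then multiplying.
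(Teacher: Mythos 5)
Your proof is correct and follows essentially the same route as the paper, which disposes of the lemma in one line by combining Lemma \ref{lemmaTmul} with Lemma \ref{lemmaTpower} exactly as you do. One small caveat: $(W,q)=1$ is not among the hypotheses, but your argument survives without it, since grouping the $p$-parts of $W$ and $q$ together still meets the coprimality condition of Lemma \ref{lemmaTmul}, and Lemma \ref{lemmaTpower} already covers the case $t\ge 1$ and $k\ge 1$ simultaneously.
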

\begin{proof}This follows from Lemma \ref{lemmaTmul} and Lemma \ref{lemmaTpower}.\end{proof}

\vskip3mm

\section{Proofs of Proposition \ref{propextendLiu} and Corollary \ref{corollary13}}

We define
\begin{align}\label{defTalpha}T(\alpha;\xi_1,\ldots,\xi_s)=
\sum_{\frac{X}{2}<x_1,\ldots,x_s\le X}\xi_1(x_1)\ldots \xi_s(x_s)
e\big(\alpha f(x_1,\ldots,x_s)\big)\end{align}
with $\xi_i(x)$ satisfying
\begin{align}\label{definexi}\sum_{\frac{X}{2}<x\le X}|\xi_i(x)|^2 \ll X\log X \ \textrm{ for } 1\le i\le s.\end{align}

The following result is essentially Lemma 3.7 of Liu \cite{Liu}.
\begin{lemma}Let $T(\alpha;\xi_1,\ldots,\xi_s)$ be defined in \eqref{defTalpha}. Suppose that $\alpha=\frac{a}{q}+\beta$ with $(a,q)=1$ and $|\beta|\le \frac{1}{q^2}$. Then we have
\begin{align}\label{Txi}T(\alpha;\xi_1,\ldots,\xi_s)\ll (X\log X)^{s}\big(\frac{1}{X}+\frac{1}{q(1+X^2|\beta|)}+\frac{q(1+X^2|\beta|)}{X^2}\big)^{\frac{5}{2}}.\end{align}\end{lemma}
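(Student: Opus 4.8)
The plan is to exploit the hypothesis $\rankoff(f)\ge 5$ to linearise $T(\alpha;\xi_1,\ldots,\xi_s)$ by a single application of Cauchy--Schwarz, thereby reducing matters to a classical one--dimensional exponential sum estimate. By the definition of $\rankoff$, fix disjoint index sets $\{i_1,\ldots,i_5\}$ and $\{j_1,\ldots,j_5\}$ for which the $5\times 5$ matrix $B=(a_{i_k,j_l})_{1\le k,l\le 5}$ is nonsingular. Write $\mathbf{u}=(x_{i_1},\ldots,x_{i_5})$ for these five variables, $\mathbf{v}=(x_{j_1},\ldots,x_{j_5})$, and let $\mathbf{y}$ denote the remaining $s-5$ variables (so $\mathbf{v}$ is part of $\mathbf{y}$, together with $s-10$ further variables $\mathbf{z}$). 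Because $\{i_k\}\cap\{j_l\}=\emptyset$, the terms of $f$ coupling a $\mathbf{u}$-index with a $\mathbf{v}$-index form exactly the bilinear expression $2\,\mathbf{u}B\mathbf{v}^{T}$, while every other term of $f$ is either quadratic in $\mathbf{u}$ alone or entirely free of $\mathbf{u}$.

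First I would apply Cauchy--Schwarz in the $s-5$ variables $\mathbf{y}$, keeping the sum over $\mathbf{u}$ inside, and bound the resulting $L^2$-factor by $\ll (X\log X)^{s-5}$ using \eqref{definexi}. Expanding the square introduces a variable $\mathbf{u}'$; in the difference $f(\mathbf{u},\mathbf{y})-f(\mathbf{u}',\mathbf{y})$ every term quadratic in or independent of $\mathbf{u}$ cancels, leaving an expression \emph{linear} in $\mathbf{y}$ whose coefficient of $x_{j_l}$ equals $2\bigl(B^{T}(\mathbf{u}-\mathbf{u}')\bigr)_l$ and whose coefficient of any other variable is some linear form in $\mathbf{u}-\mathbf{u}'$. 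Hence the $\mathbf{y}$-sum factors into $s-5$ one-dimensional geometric sums; the $s-10$ not indexed by a $j_l$ are bounded trivially by $\ll X$, and the five remaining contribute $\prod_{l=1}^{5}\min\bigl(X,\|2\alpha(B^{T}(\mathbf{u}-\mathbf{u}'))_l\|^{-1}\bigr)$. Discarding the phases via $|\xi(\mathbf{u})\xi(\mathbf{u}')|\le\tfrac12(|\xi(\mathbf{u})|^2+|\xi(\mathbf{u}')|^2)$ and the symmetry $\mathbf{d}\mapsto-\mathbf{d}$, and invoking \eqref{definexi} once more, reduces everything to estimating $\sum_{|\mathbf{d}|\ll X}\prod_{l=1}^{5}\min(X,\|2\alpha(B^{T}\mathbf{d})_l\|^{-1})$.

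Next I would linearise the lattice-point count. Since $B^{T}$ is nonsingular over $\Z$, the map $\mathbf{d}\mapsto\mathbf{n}=B^{T}\mathbf{d}$ is injective with $|\mathbf{n}|\ll_f|\mathbf{d}|$, so the above sum is $\ll\sum_{\mathbf{n}\in\Z^5,\,|\mathbf{n}|\ll X}\prod_{l}\min(X,\|2\alpha n_l\|^{-1})=\bigl(\sum_{|n|\ll X}\min(X,\|2\alpha n\|^{-1})\bigr)^{5}$. Writing $2\alpha=a'/q'+\beta'$ in lowest terms with $q'\asymp q$ and $|\beta'|\le 2/q^{2}$, the classical one-dimensional estimate of Vinogradov--Montgomery type (this is just the one-variable case of Lemma 3.7 of \cite{Liu}) gives $\sum_{|n|\ll X}\min(X,\|2\alpha n\|^{-1})\ll X^{2}\bigl(\tfrac1X+\tfrac{1}{q(1+X^2|\beta|)}+\tfrac{q(1+X^2|\beta|)}{X^2}\bigr)(\log X)^{O(1)}$, the quantity $q(1+X^2|\beta|)$ being the effective denominator of $\alpha$ at resolution $X^{2}$. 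Raising to the fifth power, substituting into the Cauchy--Schwarz bound, collecting the powers of $X\log X$ (since $s\ge 10$ there is ample slack, so the $O(1)$ powers of $\log X$ are absorbed), and taking square roots yields \eqref{Txi}.

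The main obstacle is the third step: identifying the correct effective denominator $q(1+X^2|\beta|)$ and establishing the one-dimensional bound in the stated uniform shape. This is precisely where the hypothesis $|\beta|\le q^{-2}$ enters, via a case distinction according to whether $X^{2}|\beta|\lessgtr 1$ — equivalently, locating the best rational approximation to $2\alpha$ with denominator up to $X^{2}$. The remaining ingredients (the rank reduction, the Cauchy--Schwarz linearisation, and the change of variables by $B^{T}$) are routine bookkeeping, the only minor nuisance being the factor $2$ in $2\alpha$, which at worst halves the denominator and is harmless. Since the statement is declared to be essentially Lemma 3.7 of \cite{Liu}, one may alternatively quote that lemma directly once the rank reduction and Cauchy--Schwarz step have been carried out.
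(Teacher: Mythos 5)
The paper offers no proof of this lemma at all — it simply cites Lemma 3.7 of \cite{Liu} — and your argument is exactly the standard route behind that result: isolate a nonsingular off-diagonal $5\times 5$ block via $\rankoff(f)\ge 5$, apply Cauchy--Schwarz to linearise, factor the differenced inner sum into one-dimensional geometric series, and invoke the classical $\sum_{|n|\ll X}\min\big(X,\|\alpha n\|^{-1}\big)$ estimate with effective denominator $q(1+X^2|\beta|)$, the $\log$-power bookkeeping closing because $s\ge 5$. The only (harmless) imprecision is the claim that the terms of $f$ quadratic in $\mathbf{u}$ ``cancel'' in $f(\mathbf{u},\mathbf{y})-f(\mathbf{u}',\mathbf{y})$: they do not cancel, but they are independent of $\mathbf{y}$ and so disappear once absolute values of the $\mathbf{y}$-sum are taken.
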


 This result is useful when $\alpha \in \mathfrak{m}(Q)$ with
 $Q$ greater than a large power of $\log X$, say $Q\ge (\log X)^{20s}$. Thus we define
 \begin{align}\label{defineQ0}Q_0= (\log X)^{20s}.\end{align}

Similarly to Lemma 4.1 of Liu \cite{Liu}, by using \eqref{Txi}, we can prove the following result.
 \begin{lemma}\label{lemmaliu1}Let $T(\alpha;\xi_1,\ldots,\xi_s)$ be defined in \eqref{defTalpha}. Let $Q_0$ be given in \eqref{defineQ0}. Then we have
\begin{align}\label{boundTminor}\int_{\mathfrak{m}(Q_0)}|T(\alpha;\xi_1,\ldots,\xi_s)|d\alpha \ll X^{s-2}(\log X)^{-8s}.\end{align}\end{lemma}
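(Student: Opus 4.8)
The plan is to derive the minor-arc bound \eqref{boundTminor} from the pointwise estimate \eqref{Txi} by a dyadic dissection of $\mathfrak{m}(Q_0)$ together with a standard $L^2$ input. The key point is that the exponent $\frac52$ in \eqref{Txi} is comfortably larger than what is needed: integrating a quantity of size roughly $(q(1+X^2|\beta|))^{-5/2}$ against $d\alpha$ over a Farey-type dissection loses only an $L^1$ power (a factor gained from summing $q^{-3/2+\varepsilon}$ over $q$ and integrating $(1+X^2|\beta|)^{-3/2}$ over $\beta$), so there is room to spare.

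First I would recall that by Dirichlet's theorem every $\alpha\in[(Y/W)^{-1},1+(Y/W)^{-1}]$ (here, in the $W=1$ instance relevant to $T$, every $\alpha$ in the circle) has a rational approximation $\alpha=a/q+\beta$ with $(a,q)=1$, $q\le X^2/Q_0$ say, and $|\beta|\le 1/(qX^2/Q_0)\le 1/q^2$. For $\alpha\in\mathfrak{m}(Q_0)$ one necessarily has $q>Q_0$ or $q(1+X^2|\beta|)>Q_0$; in either regime the bracket in \eqref{Txi} is dominated by $\big(q(1+X^2|\beta|)\big)^{-1}$ up to the harmless $X^{-1}$ term, and this harmless term contributes $\ll (X\log X)^s X^{-5/2}$ which is far smaller than the claimed bound after integrating over an interval of length $1$. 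So it suffices to bound
\[
(X\log X)^{s}\int_{\mathfrak{m}(Q_0)}\big(q_\alpha(1+X^2|\beta_\alpha|)\big)^{-5/2}\,d\alpha,
\]
where $q_\alpha,\beta_\alpha$ are the parameters attached to $\alpha$.

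Next I would perform the dissection: split according to the dyadic size $R\le q_\alpha< 2R$ for $R$ ranging over powers of $2$ with $Q_0/2\le R\le X^2$, and within each such range further split according to whether $1+X^2|\beta_\alpha|$ lies in a dyadic block $[2^{j},2^{j+1})$. For fixed $q\in[R,2R)$ and fixed $a$ with $(a,q)=1$, the set of $\alpha$ with $1+X^2|\beta_\alpha|<2^{j+1}$ is an interval of length $\ll 2^{j}/X^2$ around $a/q$; there are $\ll R$ admissible $a$ for each $q$, hence $\ll R^2$ choices of $(a,q)$ in the block, and on this set the integrand is $\ll (R\,2^{j})^{-5/2}$. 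Thus the block contributes
\[
(X\log X)^s\cdot R^2\cdot \frac{2^{j}}{X^2}\cdot (R2^{j})^{-5/2}
\ll (X\log X)^s X^{-2} R^{-1/2}2^{-3j/2}.
\]
Summing the geometric series in $j\ge 0$ and in the dyadic values $R\ge Q_0/2$ gives $\ll (X\log X)^s X^{-2} Q_0^{-1/2}$, which, since $Q_0=(\log X)^{20s}$, is $\ll X^{s-2}(\log X)^{s}(\log X)^{-10s}=X^{s-2}(\log X)^{-9s}$, comfortably inside \eqref{boundTminor}. One small point to be careful about is that the intervals attached to distinct Farey fractions $a/q$ with $q,q'\le X^2/Q_0$ may overlap, so to make "$\ll R^2$ choices" rigorous one should either use the disjointness of the classical major arcs $\mathfrak{M}(q,a;X^2/Q_0)$ at level $X^2/Q_0$ and argue that the portion of each dyadic $\beta$-block lying in $\mathfrak{m}(Q_0)$ is covered by these, or invoke the standard Farey-dissection bookkeeping (as in the proof of Lemma 4.1 of Liu \cite{Liu}, which this lemma is modelled on).

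The main obstacle is purely bookkeeping rather than conceptual: organizing the double dyadic sum over $(R,2^j)$ so that the counting of Farey fractions and the measure of each piece are both accounted for without double-counting, and checking that the cutoff $Q_0=(\log X)^{20s}$ is large enough to absorb the $(X\log X)^s$ prefactor with the stated power $(\log X)^{-8s}$ to spare. Since \eqref{Txi} furnishes a power saving of $q^{-5/2}$ and the integration over $\beta$ only needs convergence (exponent $>1$), there is ample slack, and no genuinely new idea beyond the Hardy--Littlewood dissection is required; the argument is a verbatim adaptation of Liu's Lemma 4.1 with the weights $\xi_i$ in place of $\Lambda$, the hypothesis \eqref{definexi} being exactly the $L^2$ bound needed to run \eqref{Txi}.
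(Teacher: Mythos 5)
Your overall strategy (the pointwise bound \eqref{Txi} combined with a dyadic Farey dissection, as in Lemma 4.1 of Liu) is exactly what the paper intends, but there is a genuine error in your reduction. You take Dirichlet approximations at level $N=X^2/Q_0$ and then assert that on $\mathfrak{m}(Q_0)$ the bracket in \eqref{Txi} is dominated by $\big(q(1+X^2|\beta|)\big)^{-1}$ up to a harmless $X^{-1}$. This is false for the third term $q(1+X^2|\beta|)/X^2$: with your choice of $N$ the denominator $q_\alpha$ is of order $X^2/Q_0$ on a set of $\alpha$ of measure comparable to $1$ (the $\alpha$ with $q_\alpha\le X^2/(100Q_0)$ occupy measure at most $1/50$), and there the third term is of size $Q_0^{-1}$, which is neither $O(X^{-1})$ nor dominated by the second term (which is then of size $Q_0/X^2$). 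The discarded term therefore contributes on the order of $(X\log X)^sQ_0^{-5/2}=X^s(\log X)^{-49s}$ to your upper bound, exceeding the target $X^{s-2}(\log X)^{-8s}$ by essentially a factor of $X^2$. Your subsequent dyadic computation only accounts for the middle term, so the stated conclusion does not follow from what you have written.

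The repair is standard but necessary: apply Dirichlet's theorem at level $N=X$, so that $q\le X$ and $|\beta|\le 1/(qX)$. Then $|\beta|\le q^{-2}$ still holds (so \eqref{Txi} applies), $q(1+X^2|\beta|)\le q+X\le 2X$, hence the third term is $O(X^{-1})$ and can genuinely be absorbed into the first, while the minor-arc condition still forces $q(1+X^2|\beta|)>Q_0$ (if $q\le Q_0$ then $|\beta|>Q_0W^2/(qY^2)\ge Q_0/(qX^2)$ since $Y\le X$ and $W\ge 1$). With this choice your block computation is correct and gives $X^{s-2}(\log X)^{-9s}$. One further small point: your dissection runs only over $R\ge Q_0/2$, which misses minor-arc points with $q_\alpha<Q_0/2$ but $X^2|\beta_\alpha|$ large; you should sum over all pairs $(R,2^j)$ with $R2^j\gg Q_0$, and the identical geometric-series estimate $\sum R^{-1/2}2^{-3j/2}\ll Q_0^{-1/2}$ still closes the argument.
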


On recalling $\lambda_i$ given in \eqref{definelambdai} and choosing $\xi_i=\lambda_i$ , we have the following result from \eqref{boundTminor}.
 \begin{lemma}\label{lemmaliu2}Let $S(\alpha)$ be defined in \eqref{defSalpha}. Let $Q_0$ be given in \eqref{defineQ0}. Then we have
\begin{align}\label{boundminor}\int_{\mathfrak{m}(Q_0)}|S(\alpha)|d\alpha \ll Y^{s-2}(\log X)^{-6s}.\end{align}\end{lemma}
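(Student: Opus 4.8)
The plan is to recognise $S(\alpha)$, up to a harmless rescaling of the variables, as an instance of the exponential sum $T(\alpha;\xi_1,\dots,\xi_s)$ of \eqref{defTalpha} but at the scale $Y/W$ in place of $X$, and then to quote \eqref{boundTminor} at that scale. Each $\lambda_i$ is supported on the set $\{x:\ Wx+b\in I\}$, an interval of length $Y/W$ contained in $(0,X/W]$, so the sum in \eqref{defSalpha} is literally of the shape \eqref{defTalpha} with weights $\xi_i=\lambda_i$ and with the box $(X/2,X]^s$ replaced by a box of side $\asymp Y/W$; the form $f$ is evaluated directly at $\mathbf{x}$, so no translation-invariance hypothesis is needed and no cross terms arise. (If one instead prefers the substitution $y_i=Wx_i+b$ so as to keep the scale $X$, then for a general $f$ the identity $f(W\mathbf{x}+b\mathbf{1})=W^2f(\mathbf{x})+2bW(\mathbf{1}M)\mathbf{y}^{T}+b^2f(\mathbf{1})$ shows that the linear cross term factorises and can be absorbed into the individual weights without changing their moduli, while the constant term is unimodular; this route costs only a Jacobian factor $W^2$ and a factor $(X/Y)^{s-2}$, both $\ll(\log X)^{O(s)}$.)

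I would then verify that the weights obey the $L^2$ bound \eqref{definexi} at the scale $N:=Y/W$. Since $|\lambda_i(x)|\le\Lambda_{b,W;I}(x)\le\log X$ and, by \eqref{assumptiononlambdai}, $\sum_x|\lambda_i(x)|\le\delta Y/\phi(W)\le 2Y/\phi(W)$, we obtain $\sum_x|\lambda_i(x)|^2\le(\log X)\sum_x|\lambda_i(x)|\ll (W/\phi(W))\,N\log X$, which is $N\log N$ up to the factor $W/\phi(W)\ll\log\log\log X$. This is precisely the input required to run the proofs of \eqref{Txi} and of Lemma~\ref{lemmaliu1} with $X$ replaced by $N$; since $\log N\asymp\log X$ and $Q_0=(\log X)^{20s}\ge(\log N)^{20s}$, those arguments apply unchanged and give $\int_{\mathfrak{m}(Q_0)}|S(\alpha)|\,d\alpha\ll N^{s-2}(\log X)^{-8s}(\log\log\log X)^{O(s)}$. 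As $s\ge10$ and $N=Y/W\le Y$, we have $N^{s-2}\le Y^{s-2}$, and $(\log X)^{-8s}(\log\log\log X)^{O(s)}\ll(\log X)^{-6s}$, which is \eqref{boundminor}.

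The only point demanding care is the scale-consistency bookkeeping: one must check that the minor arcs $\mathfrak{m}(Q_0)$ of \eqref{definemQ} — which are already cut out relative to $Y/W$, not $X$ — are the minor arcs appropriate to a sum at scale $N=Y/W$, and that the Weyl-type bound \eqref{Txi} is stable under replacing $X$ by $N$ (it is, since $\log N\asymp\log X$ and the constants in Liu's argument depend only on $f$, not on the scale). Everything else is routine, and the generous gap between the exponents $-8s$ in \eqref{boundTminor} and $-6s$ in \eqref{boundminor} comfortably absorbs the losses $W/\phi(W)$, $(X/Y)^{O(1)}$, $W^2$, and any implied constants. I expect this last verification — confirming that ``the same proof works verbatim at the smaller scale'' — to be the main, if modest, obstacle.
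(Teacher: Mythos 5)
Your proposal is correct and is essentially the paper's own (one-sentence) proof: the paper simply takes $\xi_i=\lambda_i$ in Lemma \ref{lemmaliu1}, i.e.\ in \eqref{boundTminor}, the passage from the exponent $-8s$ to $-6s$ and from $X^{s-2}$ to $Y^{s-2}$ being exactly the routine rescaling slack ($X\ll Y\log X$, $W\le\log X$) that you track explicitly at scale $Y/W$. Your added checks — the $L^2$ hypothesis \eqref{definexi} for the $\lambda_i$, the observation that $\mathfrak{m}(Q_0)$ is already cut out relative to $Y/W$, and $\log(Y/W)\asymp\log X$ — are just a fuller account of the bookkeeping the paper leaves implicit.
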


Lemma \ref{lemmaliu1} is valid for arbitrary sequences $\xi_{i}$ given in \eqref{definexi}. Then in order to deal with the contribution from the major arcs $\mathfrak{M}(Q_0)$, we need arithmetic theory on the distribution of a sequence, such as the Siegel-Walfisz theorem. However, we do not have such a strong distribution theorem if we consider an arbitrary (dense) sequence of primes.

From now on, we write $T(\alpha)=T(\alpha;\xi_1,\ldots,\xi_s)$ with
$$\xi_1=\cdots=\xi_s=\chi_{b,W;I}\cdot \Lambda,$$
where $\chi_{b,W;I}$ the characteristic function for $\{x\in I:\ x\equiv b\pmod{W}\}$. In particular, we have
\begin{align}\label{circle1}\nu_f(b,W;I)=\int_0^1T(\alpha)d\alpha.\end{align}

\vskip3mm

\noindent {\it Proof of Proposition \ref{propextendLiu}}.
Suppose that $\alpha=a/q+\beta$ with
$$1\le a\le q\le Q_0, \ \ (a,q)=1\ \ \textrm{ and }\ \ |\beta|\le \frac{Q_0W^2}{qY^2}.$$
We introduce congruence conditions to deduce that
\begin{align*}T(\alpha)=\sum_{\substack{1\le \mathbf{c}\le qW\\ (\mathbf{c},q)=1 \\ \mathbf{c}\equiv b\pmod{W}}}e\big(\frac{af(\mathbf{c})}{q}\big)\sum_{\substack{\mathbf{x}\in I^s
\\ \mathbf{x}\equiv \mathbf{c}\pmod{qW}}}\Lambda(\mathbf{x})e\big(\beta f(\mathbf{x})\big)+O(Y^{s-1}\log X).\end{align*}
By the standard application of the Siegel-Walfisz theorem and the partial summation formula, we can establish
\begin{align*}\sum_{\substack{\mathbf{x}\in I^s
\\ \mathbf{x}\equiv \mathbf{c}\pmod{qW}}}\Lambda(\mathbf{x})e\big(\beta f(\mathbf{x})\big)=\frac{1}{\phi(qW)^s}\int_{I^s} e\big(\beta f(\mathbf{y})\big)d\mathbf{y}+O(Y^s(\log X)^{-800s}).\end{align*}
 Then we conclude from above
\begin{align}\label{Tasymp}T(\alpha)=
\frac{1}{\phi(qW)^s}S^\ast_{W,b}(q,a)\mathcal{K}(\beta;I)+O(Y^s(\log X)^{-80s}),\end{align}
where $S^\ast_{W,b}(q,a)$ and $\mathcal{K}(\beta;I)$ are given in \eqref{definegausssum} and \eqref{definecalKint}, respectively.

 By the definition of $\mathfrak{M}(Q_0)$ in \eqref{defineMQ}, we have
\begin{align*}\int_{\mathfrak{M}(Q_0)}T(\alpha)d\alpha=\sum_{q\le Q_0}\sum_{\substack{a=1 \\(a,q)=1}}^q \int_{|\beta|\le \frac{Q_0W^2}{qY^2}}
T(\frac{a}{q}+\beta)d\beta.\end{align*}
We deduce by \eqref{Tasymp} that
\begin{align*}\int_{\mathfrak{M}(Q_0)}T(\alpha)d\alpha=&\sum_{q\le Q_0}\frac{1}{\phi(qW)^s}\Big(\sum_{\substack{a=1 \\(a,q)=1}}^qS^\ast_{W,b}(q,a)\Big) \int_{|\beta|\le \frac{Q_0W^2}{qY^2}}\mathcal{K}(\beta ;I)d\beta
\\ &\ +O(Y^{s-2}W^2(\log X)^{-40s})\end{align*}
and thus
\begin{align}\label{intSalpha1}\int_{\mathfrak{M}(Q_0)}T(\alpha)d\alpha=\sum_{q\le Q_0}B^\ast_{W,b}(q) \int_{|\beta|\le \frac{Q_0W^2}{qY^2}}
\mathcal{K}\big(\beta;I\big)d\beta +O(Y^{s-2}W^2(\log X)^{-40s}).\end{align}

We deduce from \eqref{boundcalK} that
\begin{align}\label{intcalK}\int_{|\beta|\le \frac{Q_0W^2}{qY^2}}\mathcal{K}\big(\beta;I\big)d\beta=&\int_{-\infty}^{+\infty}\mathcal{K}\big(\beta;I\big)d\beta+O(Y^{s-2}qQ_0^{-1}W^{-2})\notag
\\ =& \mathfrak{K}_f(I)+O(Y^{s-2}qQ_0^{-1}W^{-2}).\end{align}
Then by \eqref{intSalpha1} and \eqref{intcalK}, we obtain
\begin{align*}\int_{\mathfrak{M}(Q_0)}T(\alpha)d\alpha=&\mathfrak{K}_f(I)\sum_{q\le Q_0}B^\ast_{W,b}(q)
+Y^{s-2}Q_0^{-1}W^{-2}
\sum_{q\le Q_0}O\big(q|B^\ast_{W,b}(q)| \big)
\\  & \ +O(Y^{s-2}W^2(\log X)^{-40s}) .\end{align*}
We conclude from \eqref{BWbbound} that
\begin{align*}\int_{\mathfrak{M}(Q_0)}T(\alpha)d\alpha=&\mathfrak{K}_f(I)\mathfrak{S}_f^\ast(W,b)
+O\big(Y^{s-2}W^5\phi(W)^{-s}Q_0^{-\frac{1}{2}+\varepsilon}\big)
\\  & \ +O(Y^{s-2}W^2(\log X)^{-40s}) .\end{align*}
In particular, we have
\begin{align}\label{boundmajor}\int_{\mathfrak{M}(Q_0)}T(\alpha)d\alpha=&\mathfrak{K}_f(I)\mathfrak{S}_f^\ast(W,b)
+O\big(Y^{s-2}W^2\phi(W)^{-s}(\log X)^{-8s}\big).\end{align}

Now we combine \eqref{boundTminor} and \eqref{boundmajor} to obtain
\begin{align*}\int_{0}^1T(\alpha)d\alpha=&\mathfrak{K}_f(I)\mathfrak{S}_f^\ast(W,b)
+O\big(Y^{s-2}W^2\phi(W)^{-s}(\log X)^{-5s}\big).\end{align*}
In view of \eqref{circle1}, this completes the proof of Proposition \ref{propextendLiu}.

\vskip3mm

\noindent {\it Proof of Corollary \ref{corollary13}}. When $f$ is translation invariant, the asymptotic formula \eqref{asymnutran} follows from Proposition \ref{propextendLiu} together with \eqref{representsingularseries} and Lemma \ref{lemmasingularint}. Moreover by Lemma \ref{lemmasingularseries} and Lemma \ref{lemmasingularint}, $\mathfrak{S}_f(W)\mathfrak{I}_f$ has a positive lower bound independent of $W$. This completes the proof of Corollary \ref{corollary13}.

\vskip3mm

\section{Initial step for restriction estimate: $W$-trick}

Since $\rank_{\rm off}(f)\ge 5$, without loss of generality, we assume $\rank(M_0)=5$, where
$M_0=(a_{i,j+5})_{1\le i,j\le 5}$, that is
\begin{align} \label{defineM0}M_0=\begin{pmatrix}a_{1,6}  & \cdots & a_{1,10}
\\ \vdots & \cdots & \vdots
\\ a_{5,6} & \cdots  & a_{5,10} \end{pmatrix}.\end{align}
For $\mathbf{x}=(x_1,\ldots,x_{5})\in \Z^5$ and $\mathbf{y}=(y_1,\ldots,y_{5})\in \Z^5$, we define
\begin{align}\label{defineg}g(\mathbf{x},\mathbf{y})=\sum_{i=1}^5x_ih_i(\mathbf{y}),\end{align}
where for $1\le i\le 5$, $h_i(\mathbf{y})$ denotes
\begin{align}\label{definehi}h_i(\mathbf{y})=2\sum_{j=1}^{5}a_{i,j+5}y_j.\end{align}

For $\mathbf{y}=(y_1,\ldots,y_{5})\in \Z^5$, denote by $\lambda(\mathbf{y})=\prod_{i=1}^5\lambda_{i+5}(y_{i})$.
Recalling \eqref{defSalpha}, we deduce by triangular inequality that
\begin{align}\label{boundStoS0}S(\alpha)\le
\sum_{x_{11},\ldots,x_{s}}\Big(\prod_{i=11}^{s}|\lambda_i(x_i)|\Big)|S_0(\alpha),\end{align}
where $S_0(\alpha):=S_0(\alpha;x_{11},\ldots,x_{s})$ is
\begin{align*}S_0(\alpha)=\sum_{\mathbf{x}}\Lambda_{b,W;I}(\mathbf{x})\Big|\sum_{\mathbf{y}}\lambda(\mathbf{y})e\big(\alpha f(\mathbf{x},\mathbf{y},x_{11},\ldots,x_s)\big)\Big|.\end{align*}
Note that if $s=10$, then we just have $S(\alpha)\le S_0(\alpha)$. And in the case $s\ge 11$, $S_0(\alpha)$ may depend on $x_{11},\ldots,x_s$.

Now we install a smooth weight $w(t)$ supported on $[Y_0-Y, Y+2Y]$ satisfying

$\ \ \ \ $(i) $w(t)\ge 0$ for all $t$,

$\ \ \ \ $(ii) $w(t)\ge 1$ for $Y_0\le t\le Y_0+Y$, and

$\ \ \ \ $(iii) $w^{(2)}(t)\ll Y^{-2}$ for all $t>0$,

\noindent  where $w^{(2)}(t)$ means the second derivative. For example, we may choose $w_0(x)$ to be
\begin{align*}w_0(x)=\begin{cases}\exp(1/2)\exp(\frac{1}{(x-5/2)^2-9/4}) \ \ \ &\textrm{if } \ 1<x<4,
\\ 0 \ \ \ &\textrm{otherwise},\end{cases}\end{align*}
and define
\begin{align*}w(t)=w_0\big((t-Y_0+2Y)/Y\big).\end{align*}
Then we introduce
\begin{align}\label{defineLambdawbWI}\Lambda_{w;b,W;I}(x)=w(Wx+b)\Lambda_{b,W;I}(x).\end{align}

We deduce by Cauchy's inequality and the definition of $w(t)$ that
\begin{align*}|S_0(\alpha)|^2\ll &
\frac{Y^5}{\phi(W)^{5}}\sum_{\mathbf{x}}\Lambda_{b,W;I}(\mathbf{x})\Big|\sum_{\mathbf{y}}\lambda(\mathbf{y})
e\big(\alpha f(\mathbf{x},\mathbf{y},x_{11},\ldots,x_n)\big)\Big|^2
\\ \ll & \frac{Y^5}{\phi(W)^{5}}\sum_{\mathbf{x}}\Lambda_{w;b,W;I}(\mathbf{x})\Big|\sum_{\mathbf{y}}\lambda(\mathbf{y})
e\big(\alpha f(\mathbf{x},\mathbf{y},x_{11},\ldots,x_n)\big)\Big|^2
.\end{align*}
On expanding the square and exchanging the order of summations, we deduce from above
\begin{align}\label{boundS0afterchangesum}|S_0(\alpha)|^2\ll
\frac{Y^5}{\phi(W)^{5}}\sum_{\mathbf{y}}\sum_{\mathbf{z}}\lambda(\mathbf{y})\lambda(\mathbf{z})
\sum_{\mathbf{x}}\Lambda_{w;b,W;I}(\mathbf{x})e\big(\alpha G(\mathbf{x},\mathbf{y},\mathbf{z})\big),\end{align}
where $G(\mathbf{x},\mathbf{y},\mathbf{z})=G(\mathbf{x},\mathbf{y},\mathbf{z},x_{11},\ldots,x_{n})$ is
\begin{align*}G(\mathbf{x},\mathbf{y},\mathbf{z})
=f(\mathbf{x},\mathbf{y},x_{11},\ldots,x_n)-f(\mathbf{x},\mathbf{z},x_{11},\ldots,x_n).\end{align*}
On recalling \eqref{defineg} and \eqref{definehi}, we observe
\begin{align*}\Big|\sum_{\mathbf{x}}\Lambda_{w;b,W;I}(\mathbf{x})e\big(\alpha G(\mathbf{x},\mathbf{y},\mathbf{z})\big)\Big|=&
\Big|\sum_{\mathbf{x}}\Lambda_{w;b,W;I}(\mathbf{x})e\big(\alpha g(\mathbf{x},\mathbf{y}-\mathbf{z})\big)\Big|,
\end{align*}
and therefore,
\begin{align}\Big|\sum_{\mathbf{x}}\Lambda_{w;b,W;I}(\mathbf{x})e\big(\alpha G(\mathbf{x},\mathbf{y},\mathbf{z})\big)\Big|=\prod_{i=1}^5\Big|\sum_{x}\Lambda_{w;b,W;I}(x)e\big(\alpha xh_i(\mathbf{y}-\mathbf{z})\big)\Big|.\label{independentofx11}
\end{align}

Now we conclude from \eqref{boundS0afterchangesum} and \eqref{independentofx11} that
\begin{align}\label{boundS0toR}|S_0(\alpha)|^2\ll \big(Y/\phi(W)\big)^5R(\alpha),\end{align}
where
\begin{align}\label{defineRalpha}R(\alpha)=\sum_{\mathbf{y}}\sum_{\mathbf{z}}\Lambda_{b,W;I}(\mathbf{y})\Lambda_{b,W;I}(\mathbf{z})
\prod_{i=1}^5\Big|\sum_{x}\Lambda_{w;b,W;I}(x)e\big(\alpha xh_i(\mathbf{y}-\mathbf{z})\big)\Big|.\end{align}
On recalling \eqref{assumptiononlambdai}, we have
$$\sum_{x_{11},\ldots,x_{s}}\Big(\prod_{i=11}^{s}|\lambda_i(x_i)|\Big)\ll \big(\delta Y/\phi(W)\big)^{s-10}.$$
Since $R(\alpha)$ is independent of $x_{11},\ldots,x_s$ (if $s\ge 11$), by \eqref{boundStoS0} and \eqref{boundS0toR}, we arrive at the following result.
\begin{lemma}\label{lemmaStoR}Let $S(\alpha)$ be defined in \eqref{defSalpha}. Then we have
\begin{align*}S(\alpha)\ll \frac{\delta^{s-10}Y^{s-10}}{\phi(W)^{s-10}}\cdot \frac{Y^\frac{5}{2}}{\phi(W)^\frac{5}{2}}R(\alpha)^{\frac{1}{2}},\end{align*}
where $R(\alpha)$ is given in \eqref{defineRalpha}.
\end{lemma}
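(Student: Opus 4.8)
The plan is to obtain Lemma~\ref{lemmaStoR} as the endpoint of the reduction set up above, by combining three ingredients: the trivial decoupling of the passive variables $x_{11},\ldots,x_s$ in \eqref{boundStoS0}, the $L^1$ bound \eqref{assumptiononlambdai} on each $\lambda_i$, and the estimate \eqref{boundS0toR} for the ten active variables obtained from Cauchy's inequality. First I would note that pulling the sum over $x_{11},\ldots,x_s$ outside the absolute value in \eqref{defSalpha} gives exactly \eqref{boundStoS0}. Since $|\lambda_i|\le\Lambda_{b,W;I}$ and, by \eqref{assumptiononlambdai}, $\sum_x|\lambda_i(x)|\le\delta Y/\phi(W)$ for every $i$, the outer weight factors over the coordinates and is at most $(\delta Y/\phi(W))^{s-10}$. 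Because $R(\alpha)$ in \eqref{defineRalpha} does not depend on $x_{11},\ldots,x_s$, the lemma will follow once \eqref{boundS0toR}, that is $|S_0(\alpha)|^2\ll(Y/\phi(W))^5R(\alpha)$, is established: take square roots and multiply by the bound on the outer weight.

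To prove \eqref{boundS0toR} I would split the active variables into the two groups $\mathbf{x}=(x_1,\ldots,x_5)$ and $\mathbf{y}=(x_6,\ldots,x_{10})$ singled out by the off-diagonal block $M_0$ of \eqref{defineM0}, and apply Cauchy's inequality in $\mathbf{x}$ inside $S_0(\alpha)$. The diagonal factor is $\sum_{\mathbf{x}}\Lambda_{b,W;I}(\mathbf{x})=\bigl(\sum_x\Lambda_{b,W;I}(x)\bigr)^5$, which is $\ll(Y/\phi(W))^5$ by any upper bound of the correct order of magnitude for primes in the residue class $b$ modulo $W$ (Siegel--Walfisz, or just Brun--Titchmarsh). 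In the surviving factor I would replace $\Lambda_{b,W;I}$ by the smooth majorant $\Lambda_{w;b,W;I}$ of \eqref{defineLambdawbWI}, which is legitimate since $w\ge1$ on $I$. This is exactly where the auxiliary weight $w$, with $w\ge0$, $w\ge1$ on $I$, and $w^{(2)}\ll Y^{-2}$, must be installed, so that the one-dimensional exponential sums that survive the next step can be handled by partial summation later on.

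Expanding the square and interchanging the order of summation produces \eqref{boundS0afterchangesum} with the difference
\[
G(\mathbf{x},\mathbf{y},\mathbf{z})=f(\mathbf{x},\mathbf{y},x_{11},\ldots,x_s)-f(\mathbf{x},\mathbf{z},x_{11},\ldots,x_s).
\]
The key point is that the part of $f$ that is quadratic in $\mathbf{x}$, together with every $\mathbf{x}$-cross-term not involving the sixth through tenth arguments, cancels in $G$; hence $G$ depends on $\mathbf{x}$ only through the linear form $g(\mathbf{x},\mathbf{y}-\mathbf{z})=\sum_{i=1}^5x_ih_i(\mathbf{y}-\mathbf{z})$ of \eqref{defineg}--\eqref{definehi}, the remaining $\mathbf{x}$-free contribution being a phase of modulus one that disappears inside the absolute value. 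Since $g$ is separable in $x_1,\ldots,x_5$, the $\mathbf{x}$-sum factors over those five coordinates, giving \eqref{independentofx11}; collecting the constants then yields \eqref{boundS0toR}, and with it the lemma.

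The one step requiring genuine care is the application of Cauchy's inequality: it must be arranged so that the diagonal factor has size exactly $(Y/\phi(W))^5$, not larger, and so that the differenced form is \emph{linear} in $\mathbf{x}$, since only then does the five-dimensional character sum split into a product. Both rely on having isolated a $5\times5$ submatrix of $M$ free of diagonal entries, which is why the hypothesis $\rankoff(f)\ge5$ (equivalently, $\rank(M_0)=5$) is invoked at the outset of this section; the non-degeneracy of $M_0$ itself is not needed for Lemma~\ref{lemmaStoR} and will enter only later, in the estimation of $R(\alpha)$. Everything else --- factorization of the outer weight, passage to the smooth majorant, and the collapse of $G$ to $g(\mathbf{x},\mathbf{y}-\mathbf{z})$ plus an $\mathbf{x}$-free phase --- is routine algebra, so I do not expect any real obstacle in proving Lemma~\ref{lemmaStoR} itself; the weight $w$ is introduced here purely in anticipation of the minor-arc analysis to come.
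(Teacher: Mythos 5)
Your proposal is correct and follows essentially the same route as the paper: triangle inequality to strip off $x_{11},\ldots,x_s$ using \eqref{assumptiononlambdai}, Cauchy's inequality in $\mathbf{x}$ with the smooth majorant $\Lambda_{w;b,W;I}$, and the observation that the differenced form $G$ reduces to the separable linear form $g(\mathbf{x},\mathbf{y}-\mathbf{z})$ up to an $\mathbf{x}$-free phase, so the five-fold sum factors into the product appearing in $R(\alpha)$. Your side remark that $\rank(M_0)=5$ is not yet needed here but only later in bounding $R(\alpha)$ is also accurate.
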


Throughout Sections 5-6, we shall assume that $\alpha$ has the rational approximation
\begin{align}\label{assumptiononalpha}\alpha=\frac{a}{q}+\beta,\ 1\le a\le q\le Q_0, \ (a,q)=1\ \textrm{ and } |\beta|\le \frac{Q_0W^2}{Y^2}.\end{align}

Now we consider the innermost summation in \eqref{defineRalpha}.
\begin{lemma}\label{lemmaTWpartial}Let $\alpha=a/q+\beta$ with $a,q$ and $\beta$ satisfying \eqref{assumptiononalpha}. Let $h\in \Z$.
Then we have
\begin{align}\sum_{x}\Lambda_{w;b,W;I}(x)e\big(\alpha xh\big)
= &e\big(-\frac{\alpha bh}{W}\big)
\frac{R^\ast_{W,b}(q,a,h)}{\phi(qW)}\int w(x)e\big(\frac{\beta xh}{W}\big)dx\notag
\\  & \ +O\big(\frac{Y}{(\log X)^{400s}}\big),\label{asympsumx}\end{align}
where $R^\ast_{W,b}(q,a,h)$ is given in \eqref{defineTWq}.
\end{lemma}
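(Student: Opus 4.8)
The plan is to evaluate the sum $\sum_{x}\Lambda_{w;b,W;I}(x)e(\alpha x h)$ by the standard method for linear exponential sums over primes in arithmetic progressions. First I would unfold the definition \eqref{defineLambdawbWI}: writing $n=Wx+b$, the sum becomes $\sum_{n\equiv b\,(W),\, n\in I}\Lambda(n)w(n)e\big(\alpha h(n-b)/W\big)$, up to the harmless shift producing the factor $e(-\alpha bh/W)$. Now I would split the residues $n\pmod{qW}$: since $(b,W)=1$, a residue $c\pmod{qW}$ with $c\equiv b\pmod W$ and $(c,qW)=1$ is the only type contributing $\Lambda$-mass (prime powers being negligible, contributing $O(Y^{1/2}\log X)$), so that $\alpha h n/W \equiv \frac{ahc}{qW}+\frac{\beta h n}{W}\pmod 1$. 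This isolates the arithmetic factor $\sum_{(c,q)=1,\,c\equiv b\,(W)}e(ahc/(qW))=R^\ast_{W,b}(q,a,h)$ defined in \eqref{defineTWq}, leaving the inner sum $\sum_{n\equiv c\,(qW)}\Lambda(n)w(n)e(\beta h n/W)$.

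Next I would treat that inner sum by Siegel--Walfisz together with partial summation, exactly as was done for the multidimensional sum in the proof of Proposition \ref{propextendLiu}. Since $q\le Q_0=(\log X)^{20s}$ and $W\le \log X$, the modulus $qW$ is a fixed power of $\log X$, so Siegel--Walfisz gives $\sum_{n\le t,\, n\equiv c\,(qW)}\Lambda(n)=\frac{t}{\phi(qW)}+O\big(t(\log X)^{-500s}\big)$; partial summation against the smooth oscillatory weight $w(n)e(\beta h n/W)$ — whose derivative is $O\big(Y^{-1}+|\beta h|/W\big)$, and note $|\beta h|/W$ is controlled because the relevant $h=h_i(\mathbf{y}-\mathbf{z})$ will be bounded by a fixed multiple of $Y/W$ in the application — converts the main term into $\frac{1}{\phi(qW)}\int w(u)e(\beta h u/W)\,du$ and keeps the error at $O\big(Y(\log X)^{-400s}\big)$. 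Reassembling the pieces and writing the integral back in the variable $x=(u-b)/W$ (the Jacobian and the $e(-\beta bh/W)$ phase being absorbed, consistently with the factor $e(-\alpha bh/W)$ already pulled out) yields precisely \eqref{asympsumx}.

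The only genuine point requiring care — and the main obstacle — is the uniformity in $\beta$: the displayed formula is asserted for all $\beta$ with $|\beta|\le Q_0W^2/Y^2$, so the oscillation $e(\beta h n/W)$ in the partial-summation step must not destroy the error term. For the statement as written $h$ is an arbitrary integer, but the total variation of the weight is $\int|w'(u)+2\pi i\beta h w(u)/W|\,du\ll 1+|\beta||h| Y/W$; with $|\beta|\le Q_0W^2/Y^2$ this is $\ll 1+Q_0|h|W/Y$, which is acceptable as long as $|h|\ll Y/(W\cdot \text{(power of }\log X))$ — and this is exactly the range in which $h=h_i(\mathbf y-\mathbf z)$ falls when $\mathbf y,\mathbf z$ range over the support dictated by $\Lambda_{b,W;I}$, since then $|y_j-z_j|\ll Y/W$. (When $h$ is too large the left side is genuinely small and the claimed main term is also small, so one could alternatively record a trivial bound; but in practice one simply invokes the lemma only for the admissible $h$.) I would therefore carry out the partial-summation estimate keeping this dependence explicit, verify that $Q_0 |h| W/Y \le (\log X)^{100s}$, say, so that the Siegel--Walfisz saving of $(\log X)^{-500s}$ still overwhelms it, and thereby obtain the stated error $O\big(Y(\log X)^{-400s}\big)$.
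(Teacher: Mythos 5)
Your proposal is correct and follows essentially the same route as the paper's (much terser) proof: rewrite the sum as a sum over $n\equiv b\pmod W$ with the phase $e(-\alpha bh/W)$ pulled out, split into residue classes modulo $qW$ to extract $R^\ast_{W,b}(q,a,h)$, and apply the Siegel--Walfisz theorem with partial summation against the smooth weight. Your concern about uniformity in $h$ is well founded, and your remedy (invoking the statement only for the $h=h_i(\mathbf{y}-\mathbf{z})\ll Y/W$ arising in the application, where $|\beta h|Y/W\ll Q_0$ is absorbed by the Siegel--Walfisz saving) is exactly how the lemma is actually used; note only that your parenthetical claim that for very large $h$ the left-hand side is ``genuinely small'' is false: since the summation variable is an integer, $e(\beta h x)$ depends only on $\beta h$ modulo $1$, so taking $q\mid h$ with $\beta h$ close to a nonzero integer gives a left side of size $\gg Y/\phi(W)$ while the claimed right side is $O\big(Y(\log X)^{-400s}\big)$ — so the restriction to admissible $h$ is not optional but necessary, precisely as your main argument assumes.
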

\begin{proof}
Note that
\begin{align*}\sum_{x}\Lambda_{w;b,W;I}(x)e\big(\alpha xh\big)=
\sum_{\substack{x\equiv b\pmod{W}}}w(x)\Lambda(x)e\big(\frac{\alpha xh-\alpha bh}{W}\big).\end{align*}
We deduce by the Siegel-Walfisz theorem and the partial summation formula
\begin{align*}
\sum_{\substack{x\equiv b\pmod{W}}}w(x)\Lambda(x)e\big(\frac{\alpha xh}{W}\big)=&\frac{1}{\phi(qW)}R^\ast_{W,b}(q,a,h)\int w(x)e\big(\frac{\beta xh}{W}\big)dx
\\ & \ \ +O(Y(\log X)^{-400s}).\end{align*}
This completes the proof.
\end{proof}

The exponential sum $R^\ast_{W,b}(q,a,h)$ has been studied in Lemma \ref{lemmaboundTWq}. We next consider the integration in \eqref{asympsumx}.
\begin{lemma}\label{lemmaintbound}We have
\begin{align*}\int w(x)e\big(\frac{\beta xh}{W}\big)dx
\ll \min( Y,\ |\frac{\beta h}{W}|^{-2}Y^{-1}).\end{align*}
\end{lemma}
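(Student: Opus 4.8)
The plan is to prove the two bounds in the minimum separately. The trivial bound $\int w(x)e(\beta x h/W)\,dx \ll Y$ is immediate from property (i) of the weight $w$, together with the fact that $w$ is supported on an interval of length $3Y$ and is bounded (indeed $w(t)=w_0((t-Y_0+2Y)/Y)$ with $w_0$ a fixed smooth bump, so $\|w\|_\infty \ll 1$). Thus $\int |w(x)|\,dx \ll Y$, which gives the first bound.

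For the second bound, set $\theta = \beta h/W$ and integrate by parts twice. Writing $e(\theta x) = \frac{1}{2\pi i \theta}\frac{d}{dx} e(\theta x)$ and using that $w$ (hence all its derivatives) is compactly supported so the boundary terms vanish, I get
\begin{align*}
\int w(x) e(\theta x)\,dx = \frac{1}{(2\pi i \theta)^2}\int w^{(2)}(x) e(\theta x)\,dx.
\end{align*}
By property (iii), $w^{(2)}(x) \ll Y^{-2}$, and $w^{(2)}$ is again supported on an interval of length $\ll Y$, so the integral on the right is $\ll Y^{-2}\cdot Y = Y^{-1}$. Therefore the whole expression is $\ll |\theta|^{-2} Y^{-1} = |\beta h/W|^{-2} Y^{-1}$, as claimed. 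Combining the two estimates yields the stated bound with the minimum.

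I do not anticipate a genuine obstacle here; the only point requiring a little care is to confirm that $w$ is genuinely $C^2$ with compactly supported first and second derivatives, so that the boundary terms in the integration by parts vanish and property (iii) may be applied on the full support. With the explicit choice $w(t) = w_0((t-Y_0+2Y)/Y)$ and $w_0$ the standard mollifier displayed above, $w_0 \in C^\infty_c$, so $w \in C^\infty_c$ with support in $[Y_0-Y, Y_0+3Y]$ (an interval of length $4Y$); the chain rule gives $w^{(2)}(t) = Y^{-2} w_0^{(2)}((t-Y_0+2Y)/Y) \ll Y^{-2}$, which is exactly property (iii). Hence both the bounded-density argument and the twofold integration by parts go through without incident.
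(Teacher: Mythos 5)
Your proposal is correct and follows essentially the same route as the paper: the trivial bound $\ll Y$ from the size and support of $w$, and the second bound by integrating by parts twice and invoking $w^{(2)}(x)\ll Y^{-2}$. The only blemish is a harmless inconsistency about the support (it is $[Y_0-Y,\,Y_0+2Y]$, of length $3Y$, not $4Y$), which does not affect the argument.
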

\begin{proof}We first observe a trivial bound
$$\int w(x)e\big(\frac{\beta xh}{W}\big)dx
\ll Y.$$
 For $\beta h\not=0$, we deduce by integration by parts twice that
\begin{align*}\int w(x)e\big(\frac{\beta xh}{W}\big)dx
=& -\frac{1}{2\pi i\frac{\beta h}{W}}\int w'(x)e\big(\frac{\beta xh}{W}\big)dx
\\ =&\,\frac{1}{(2\pi i\frac{\beta h}{W})^2}\int w^{(2)}(x)e\big(\frac{\beta xh}{W}\big)dx
.\end{align*}
On recalling $w^{(2)}(x) \ll Y^{-2}$, we obtain
\begin{align*}\int w(x)e\big(\frac{\beta xh}{W}\big)dx
\ll |\frac{\beta h}{W}|^{-2}Y^{-1}
.\end{align*}
We complete the proof.
\end{proof}

In view of Lemma \ref{lemmaboundTWq} and Lemma \ref{lemmaintbound}, we introduce
\begin{align}\label{defineeta}\eta(u)=(u,q)\min(Y,\ |\frac{\beta u}{W}|^{-2}Y^{-1}),\end{align}
and by Lemma \ref{lemmaTWpartial}, we have
\begin{align}\label{boundsumx}\sum_{x}\Lambda_{w;b,W;I}(x)e\big(\alpha xh\big)
\ll \frac{1}{\phi(qW)}\eta(h)
+O\big(\frac{Y}{(\log X)^{400s}}\big).\end{align}
We  define
\begin{align}\label{definePsi}\Psi(\mathbf{u})=\sum_{\substack{
\mathbf{y},\mathbf{z} \\ \eqref{conditionnonzero}
\\ h_j(\mathbf{y}-\mathbf{z})=u_j(1\le j\le 5) }}\Lambda_{b,W;I}(\mathbf{y})\Lambda_{b,W;I}( \mathbf{z})
,\end{align}
where the condition \eqref{conditionnonzero} in the above summation means \begin{align}\label{conditionnonzero} y_j-z_j\not=0 \ \textrm{ for all }\ 1\le j\le 5.\end{align}
\begin{lemma}\label{lemmaboundRfirst}Let $R(\alpha)$ be given in \eqref{defineRalpha}. Then
we have\begin{align}\label{boundRfirst}R(\alpha)\ll \frac{1}{\phi(qW)^5}
\sum_{X^{1/3}< |\mathbf{u}|\ll \frac{Y}{W}}\eta(\mathbf{u})\Psi(\mathbf{u})+\frac{Y^{15}}{(\log X)^{400s}}
,\end{align}
where $\eta(\mathbf{u})$ and $\Psi(\mathbf{u})$ are defined in \eqref{defineeta} and \eqref{definePsi}, respectively.
\end{lemma}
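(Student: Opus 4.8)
The plan is to substitute the pointwise estimate \eqref{boundsumx} into the definition \eqref{defineRalpha} of $R(\alpha)$ --- one inner sum at a time --- and then to regroup the resulting main term according to the value of the vector $\mathbf{u}=\bigl(h_1(\mathbf{y}-\mathbf{z}),\ldots,h_5(\mathbf{y}-\mathbf{z})\bigr)$. Since \eqref{boundsumx} is valid for every integer $h$, each of the five inner sums in \eqref{defineRalpha} is $\ll \phi(qW)^{-1}\eta\bigl(h_i(\mathbf{y}-\mathbf{z})\bigr)+Y(\log X)^{-400s}$, while it is also trivially $\ll\sum_x\Lambda_{w;b,W;I}(x)\ll Y/\phi(W)$ by Brun--Titchmarsh. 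A short case analysis --- either all five inner sums are $\ll\phi(qW)^{-1}\eta\bigl(h_i(\mathbf{y}-\mathbf{z})\bigr)$, or at least one of them is $\ll Y(\log X)^{-400s}$, in which case the remaining four are estimated by the trivial bound $Y/\phi(W)$ --- shows that $R(\alpha)$ is at most a constant times $\phi(qW)^{-5}\sum_{\mathbf{y},\mathbf{z}}\Lambda_{b,W;I}(\mathbf{y})\Lambda_{b,W;I}(\mathbf{z})\prod_{i=1}^5\eta\bigl(h_i(\mathbf{y}-\mathbf{z})\bigr)$ plus an error of size $\ll Y(\log X)^{-400s}(Y/\phi(W))^{4}\sum_{\mathbf{y},\mathbf{z}}\Lambda_{b,W;I}(\mathbf{y})\Lambda_{b,W;I}(\mathbf{z})$. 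Using $\sum_{\mathbf{y},\mathbf{z}}\Lambda_{b,W;I}(\mathbf{y})\Lambda_{b,W;I}(\mathbf{z})\ll(Y/\phi(W))^{10}$, this error is $\ll Y^{15}(\log X)^{-400s}$, which is admissible for \eqref{boundRfirst}.

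It remains to treat the surviving main term. Writing $\mathbf{u}^{T}=2M_0(\mathbf{y}-\mathbf{z})^{T}$, so that $\mathbf{u}=\bigl(h_1(\mathbf{y}-\mathbf{z}),\ldots,h_5(\mathbf{y}-\mathbf{z})\bigr)$ by \eqref{defineg}--\eqref{definehi} and hence $\prod_{i=1}^5\eta\bigl(h_i(\mathbf{y}-\mathbf{z})\bigr)=\eta(\mathbf{u})$, and recalling that $M_0$ has rank $5$ and is therefore nonsingular, one has $1\le|\mathbf{y}-\mathbf{z}|\ll_f|\mathbf{u}|$ whenever $\mathbf{y}\ne\mathbf{z}$, while the condition $Wx+b\in I$ always forces $|\mathbf{u}|\ll_f Y/W$. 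Partitioning the main term by the value of $\mathbf{u}$, imposing the constraint \eqref{conditionnonzero} that $y_j\ne z_j$ for all $j$, and discarding those $\mathbf{u}$ with $|\mathbf{u}|\le X^{1/3}$, converts it into $\phi(qW)^{-5}\sum_{X^{1/3}<|\mathbf{u}|\ll Y/W}\eta(\mathbf{u})\Psi(\mathbf{u})$, at the cost of two further error terms. The first collects the pairs with $y_j=z_j$ for some $j$: then one variable is no longer free, so the weighted number of such pairs is $\ll(Y/\phi(W))^{9}\log X$, and combined with $\eta(\mathbf{u})\le(Q_0Y)^{5}$ this contributes $\ll Y^{14}(\log X)^{O_f(1)}$. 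The second collects the pairs with $0<|\mathbf{u}|\le X^{1/3}$: then nonsingularity of $M_0$ forces $|\mathbf{y}-\mathbf{z}|\ll_f X^{1/3}$, so after $\mathbf{y}$ is fixed the vector $\mathbf{z}$ ranges over $\ll_f(X^{1/3}/\phi(W))^{5}$ values by Brun--Titchmarsh, and this contributes $\ll Y^{10}X^{5/3}(\log X)^{O_f(1)}$. Since $Y>X/\log X$, both contributions are $o\bigl(Y^{15}(\log X)^{-400s}\bigr)$ and hence are subsumed by the error term in \eqref{boundRfirst}. This establishes \eqref{boundRfirst}.

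This lemma is essentially bookkeeping, and the one point that needs a little care is the treatment of the two degenerate contributions above. That treatment rests on two observations: first, the nonsingularity of $M_0$ couples $\mathbf{u}$ and $\mathbf{y}-\mathbf{z}$ tightly enough that forcing a coordinate collision $y_j=z_j$, or confining $\mathbf{u}$ to a box of side $X^{1/3}$, costs a full power of $Y$; and second, since $Y\gg X/\log X$, that power of $Y$ dominates every power of $\log X$ (coming from $\Lambda\le\log X$ or from $q\le Q_0$) that the crude upper bounds might cost. Everything else is the mechanical substitution of \eqref{boundsumx} followed by Brun--Titchmarsh upper bounds for sums of the von Mangoldt function over arithmetic progressions.
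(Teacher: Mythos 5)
Your argument follows the paper's proof in all essentials: substitute the pointwise bound \eqref{boundsumx} into \eqref{defineRalpha} (the paper removes the degenerate configurations first and then applies \eqref{boundsumx}, whereas you do it in the opposite order, but the bookkeeping is identical), control the resulting error terms by trivial and Brun--Titchmarsh bounds, and regroup by $\mathbf{u}^{T}=2M_0(\mathbf{y}-\mathbf{z})^{T}$ using $\rank(M_0)=5$.

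There is, however, one incomplete step. Under the paper's convention for vector inequalities, the sum in \eqref{boundRfirst} runs over $\mathbf{u}$ with $|u_j|>X^{1/3}$ for \emph{every} $j$, so you must discard all pairs for which $|h_j(\mathbf{y}-\mathbf{z})|\le X^{1/3}$ for \emph{some} $j$. Your estimate for the discarded set only treats the case in which all five coordinates are $\le X^{1/3}$: the deduction that nonsingularity of $M_0$ forces $|\mathbf{y}-\mathbf{z}|\ll_f X^{1/3}$ requires the whole vector $\mathbf{u}$ to lie in the small box, and it fails when, say, $|u_1|\le X^{1/3}<|u_2|$. These mixed cases are covered by neither of your two error terms. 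They are easily handled, and this is exactly what the paper does (see the sentence introducing \eqref{conditionsizeh}): a single constraint $|h_j(\mathbf{y}-\mathbf{z})|\le X^{1/3}$ confines one nonzero linear form in $\mathbf{y}-\mathbf{z}$ to an interval of length $\ll X^{1/3}$, hence pins one of the ten variables to $\ll X^{1/3}$ values once the others are fixed; this yields $\ll (Y/W)^{9}X^{1/3}(\log X)^{O(1)}$ weighted pairs, and after multiplying by $\eta(\mathbf{u})\le (Q_0Y)^{5}$ and the prefactor $\phi(qW)^{-5}$ the contribution is $\ll Y^{14}X^{1/3}(\log X)^{O_f(1)}$, which is again $o\big(Y^{15}(\log X)^{-400s}\big)$ because $Y>X/\log X$. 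With this one additional case your proof is complete and coincides with the paper's.
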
\begin{proof} The contribution to the summations in \eqref{defineRalpha} from those terms with $y_j-z_j=0$ for some $1\le j\le 5$ is at most $O(X^{14}\log X)$.
Thus, we shall assume that \eqref{conditionnonzero} holds.  Similarly, the contribution to the summations in \eqref{defineRalpha} with
 $|h_j(\mathbf{y}-\mathbf{z})|\le X^{1/3}$ for some $1\le j\le 5$ is at most $O(X^{14+1/3}(\log X)^{15})$. We further assume
\begin{align}\label{conditionsizeh}
 |h_j(\mathbf{y}-\mathbf{z})|>X^{1/3}\ \textrm{ for all }\ 1\le j\le 5.\end{align}
Then we conclude from \eqref{defineRalpha} and  \eqref{boundsumx} that
\begin{align*}R(\alpha)\ll \frac{1}{\phi(qW)^5}\sum_{\substack{\mathbf{y},\ \mathbf{z}\\ \eqref{conditionnonzero},\ \eqref{conditionsizeh}}}\Lambda_{b,W;I}(\mathbf{y})\Lambda_{b,W;I}( \mathbf{z})
\prod_{j=1}^{5}\xi_j(\mathbf{y}-\mathbf{z})+\frac{Y^{15}}{(\log X)^{400s}}
,\end{align*}where $\xi_j(\mathbf{v})$ is defined as
\begin{align*}\xi_j(\mathbf{v})=(h_j(\mathbf{v}),q)\min(Y,\ |\frac{\beta h_j(\mathbf{v})}{W}|^{-2}Y^{-1}).\end{align*}
This completes the proof of \eqref{boundRfirst} by changing variables $u_i=h_i(\mathbf{y}-\mathbf{z})$ ($1\le i\le 5$).
\end{proof}

\vskip3mm

\section{Restriction estimate: an application of the sieve}

In order to deal with $\Psi(\mathbf{u})$ defined in \eqref{definePsi}, we need an upper bound for
\begin{align}\label{defineUps}\Upsilon(b,W;I;v),\end{align}
 which denotes the number of solutions to $p_1-p_2=Wv$, where $p_1,p_2\in I$ are primes satisfying $p_1\equiv p_2\equiv b\pmod{W}$. We can obtain a nice upper bound via a standard application of the sieve method. The result in the case $W=1$ is well-known, and its proof works well to deal with the general case if $W$ is no more than a fixed power of $\log X$. So we give the proof of the following result briefly.

\begin{lemma} \label{lemmasieve}Let $v\not=0$.  Let $b,W$ and $I$ be given in \eqref{assumptionbW} and \eqref{defineintervalI}, respectively. Let $\Upsilon(b,W;I;v)$ be given around \eqref{defineUps}. Then we have
\begin{align}\label{upperboundtwin}\Upsilon(b,W;I;v) \ll \frac{Y}{\phi(W)(\log X)^2}\rho(Wv)
,\end{align}where for a nonzero integer $x$, $\rho(x)$ denotes
\begin{align*}\rho(x)=\prod_{p|x}(1+\frac{1}{p})
.\end{align*}
\end{lemma}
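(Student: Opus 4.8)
\textbf{Proof proposal for Lemma \ref{lemmasieve}.}
The plan is to run a standard upper-bound sieve (Selberg's $\Lambda^2$ sieve, or equivalently the Brun--Titchmarsh machinery) on the sequence
\[
\mathcal{N}=\{n(n+Wv):\ n\in I,\ n\equiv b\ (\mathrm{mod}\ W)\},
\]
counting those $n$ for which both $n$ and $n+Wv$ are prime and lie in $I$. Every such $n$ contributes to $\Upsilon(b,W;I;v)$, so an upper bound for the sifted count of $\mathcal{N}$ (sifting out small prime factors) bounds $\Upsilon(b,W;I;v)$ from above. First I would reduce to primes in a dyadic-type interval of length $Y\gg X/\log X$ lying in $[X/2,X]$, so that all the usual sieve ranges and error terms behave as in the classical twin-prime upper bound; the congruence $n\equiv b\ (\mathrm{mod}\ W)$ is harmless because $(b,W)=1$ and $W\le \log X$, so $W$ is absorbed as a bounded power of $\log X$ in every error term.

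The key steps, in order. (1) Set up the sieve: for a prime $p\nmid W$ we remove the residue classes $n\equiv 0$ and $n\equiv -Wv\ (\mathrm{mod}\ p)$, which is two classes if $p\nmid v$ and one class if $p\mid v$; for $p\mid W$ there is nothing to remove since $n\equiv b\not\equiv 0$ already. Thus the dimension of the sieve is essentially $2$, with a local correction at the primes dividing $v$. (2) Choose the sieve level $z=X^{1/10}$ (any small fixed power works) and apply the fundamental lemma / Selberg upper bound. The main term is
\[
\frac{Y}{\phi(W)}\prod_{\substack{p\le z\\ p\nmid Wv}}\Bigl(1-\frac{2}{p}\Bigr)\prod_{\substack{p\le z\\ p\mid v,\ p\nmid W}}\Bigl(1-\frac{1}{p}\Bigr),
\]
which, after multiplying and dividing by the relevant Euler factors and using Mertens' theorem, is
\[
\ll \frac{Y}{\phi(W)(\log X)^2}\prod_{p\mid v}\Bigl(1-\frac1p\Bigr)^{-1}\ll \frac{Y}{\phi(W)(\log X)^2}\,\rho(Wv),
\]
since $\prod_{p\mid v}(1-1/p)^{-1}=\prod_{p\mid v}(1+\tfrac1{p-1})\ll \prod_{p\mid v}(1+\tfrac1p)\cdot O(1)^{\omega(v)/\log\log}$ — more cleanly, one bounds $\prod_{p\mid v}(1-1/p)^{-1}\ll \rho(v)\le \rho(Wv)$ using $1+\tfrac1{p-1}\le (1+\tfrac1p)^2$ for $p\ge 2$ and absorbing the finitely many small primes. (3) Control the remainder: the error in the Selberg sieve is $\sum_{d\le z^2}3^{\omega(d)}|r_d|$ with $|r_d|\le$ (bounded), and since $z^2=X^{1/5}$ is far below $Y$, this is $O(X^{1/5+\varepsilon})=o(Y/(\log X)^2)$, negligibly small even after the factor $\phi(W)^{-1}$ is accounted for.

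The main obstacle — really the only point requiring care — is the bookkeeping of the dependence on $v$ and on $W$ simultaneously: one must verify that the extra Euler factors at primes $p\mid v$ assemble exactly into $\rho(Wv)=\prod_{p\mid Wv}(1+1/p)$ and not something larger, and that the factor $\phi(W)^{-1}$ (rather than $W^{-1}$) is what survives. Both are routine: the $\phi(W)^{-1}$ comes from restricting $n$ to one residue class mod $W$ while the sieve only touches primes coprime to $W$, and the $\rho$-factor comes from the elementary inequality $(1-1/p)^{-1}\ll 1+1/p$ for $p\ge 2$ together with the trivial bound $\rho(v)\le \rho(Wv)$. Since the case $W=1$ is the classical result and $W\le\log X$ only inflates error terms by a bounded power of $\log X$ while inflating the main term by the explicit factor $\phi(W)^{-1}$, no new idea is needed; I would simply cite the standard twin-prime-type upper bound and indicate these two modifications, exactly as the statement before the lemma suggests.
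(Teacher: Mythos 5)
Your approach is sound and delivers the stated bound, but it is genuinely different from the paper's. You run a two\-dimensional (twin\-prime type) upper\-bound sieve on the integers $n\in I$ with $n\equiv b\pmod W$, sifting the two classes $n\equiv 0$ and $n\equiv -Wv$ modulo each small prime; the remainder terms are then trivially $O(d^{\varepsilon})$ per modulus, so no equidistribution input is needed. The paper instead keeps $p=p_2$ prime from the outset and sieves only the shifted values $p+Wv$ over the set $A_d=\{p+Wv:\ p\in I,\ p\equiv b\pmod W,\ d\mid p+Wv\}$: this is a one\-dimensional sieve whose expected densities are $1/\phi(d)$, and the remainders are controlled on average by the Bombieri--Vinogradov theorem before applying the upper\-bound sieve of Diamond--Halberstam--Galway with $z=X^{1/5}$. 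Your route is more elementary (no Bombieri--Vinogradov) at the cost of a dimension\-two sieve; the paper's route trades the higher sieve dimension for an appeal to primes in arithmetic progressions on average. Both yield main terms of the same quality, and your treatment of the Euler factors at $p\mid v$ (via $(1-1/p)^{-1}\le\zeta(2)^{1/\omega}\cdot(1+1/p)$, i.e. $\prod_{p\mid v}(1-1/p)^{-1}\le \zeta(2)\rho(v)$) is correct.

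One accounting point needs fixing in your write\-up: the prefactor of your sieve main term should be $Y/W$, the number of integers of $I$ in the residue class $b\pmod W$, not $Y/\phi(W)$. The factor $\phi(W)^{-1}$ in the final bound does not come from ``restricting $n$ to one class mod $W$''; it emerges only after you combine $Y/W$ with the gain $\prod_{p\mid W}(1-2/p)^{-1}\asymp\prod_{p\mid W}(1+1/p)^{2}$ obtained from omitting the primes $p\mid W$ from the sieve product, together with $\rho(W)/W\le 1/\phi(W)$. As literally written, with $Y/\phi(W)$ in front, your bound carries a spurious extra factor of order $\rho(W)$, which is unbounded for highly composite $W\le\log X$ and which the paper explicitly cannot afford in the later application. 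With the prefactor corrected the bookkeeping closes exactly as you intend, giving $\ll \frac{Y}{\phi(W)(\log X)^2}\rho(Wv)$.
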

\begin{proof} In order to apply the sieve method, for $(d,2Wv)=1$, we consider
\begin{align*}A_d=\big\{p+Wv:\ p\in I,\ p\equiv b\pmod{W}\ \textrm{ and }\ p+Wv\equiv 0\pmod{d}
\big\}.\end{align*}
We may expect that $|A_d|$ is well approximated by $\frac{1}{\phi(d)}X_0$ (at least in some average sense), where
\begin{align*}X_0=\frac{1}{\phi(W)}\int_{Y_0}^{Y_0+Y}\frac{1}{\log t}dt,\end{align*}
and thus we consider
\begin{align*}r_d=|A_d|-\frac{1}{\phi(d)}X_0.\end{align*}
We deduce by  Bombieri-Vinogradov theorem and Cauchy's inequality that for any constant $C>100$,
\begin{align*}\sum_{\substack{d\le X^{\frac{1}{2}-\varepsilon}\\ (d,2Wv)=1}}\tau(d)^2|r_d| \ll \frac{X}{(\log X)^{C}}.\end{align*}
On choosing $z=X^{1/5}$, we deduce from Theorem 7.1 in \cite{DHG} that
\begin{align}\label{upperboundbysieve}|\{n\in A_1:\ (n,P_z)=1\}| \ll X_0 \prod_{\substack{3\le p\le z \\ p\nmid Wv}}(1-\frac{1}{p-1}),\end{align}
where $P_z$ denotes
\begin{align*}P_z=\prod_{\substack{3\le p\le z \\ p\nmid Wv}}p.\end{align*}
Now \eqref{upperboundtwin} follows from \eqref{upperboundbysieve} since $\Upsilon(b,W;I;v) \le |\{n\in A_1:\ (n,P_z)=1\}|$.
\end{proof}

A simple upper bound for $\rho(v)$ asserts that $\rho(v)\ll \log\log X$. However, as explained in the introduction, our proof of Theorem \ref{theorem1} would fail if there were an extra factor $\log\log X$ in Proposition \ref{proprestriction}. Therefore, we need to prepare a technical lemma to show that $\rho(v)$ exhibits like a constant on average. We first point out for $0<|v|<X$,
\begin{align}\label{lemmaboundrhox}\rho(v)\ll \sum_{\substack{d\le \log X
\\ d|v}} \frac{1}{d}
.\end{align}
For $\mathbf{v}\in \Z^5$, we introduce the conditions
\begin{align}\label{condition1}
0<|v_j|<Y,\, H_j<|h_j(\mathbf{v})|\le2H_j\, \textrm{ and } \, h_j(\mathbf{v})\equiv c_j\pmod{q}\, \textrm{ for }\, j=1,\ldots,5.\end{align}
Then we introduce
\begin{align}\label{definecalH}
\mathcal{H}:=\mathcal{H}(H_1,\ldots,H_5;c_1,\ldots,c_5;q)=\sum_{\substack{\mathbf{v}
\\ \eqref{condition1}}}\rho(\mathbf{v}),\end{align}
where the summation is taken over $\mathbf{v}$ satisfying conditions in \eqref{condition1}.
\begin{lemma}\label{lemmacalH}Suppose that $H_i\ge q\log X$ for all $1\le i\le 5$. Let $\mathcal{H}$ be defined in \eqref{definecalH}. Then we have
\begin{align*}
\mathcal{H}\ll H_1H_2H_3H_4H_5 q^{-5}\tau(q).\end{align*}
\end{lemma}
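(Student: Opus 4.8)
The plan is to estimate $\mathcal{H}$ by first replacing $\rho(\mathbf{v})=\prod_{j=1}^5\rho(v_j)$ with a sum over small divisors via \eqref{lemmaboundrhox}, so that
\begin{align*}
\mathcal{H}\ll \sum_{\substack{d_1,\ldots,d_5\le \log X}}\frac{1}{d_1\cdots d_5}\,N(d_1,\ldots,d_5),
\end{align*}
where $N(d_1,\ldots,d_5)$ counts $\mathbf{v}\in\Z^5$ satisfying \eqref{condition1} together with $d_j\mid v_j$ for $1\le j\le 5$. The main point is then to show $N(d_1,\ldots,d_5)\ll H_1\cdots H_5 q^{-5}$ uniformly in the $d_j$ (with at most a harmless factor depending on $(q,d_j)$ that will be absorbed by $\tau(q)$), after which summing the divisor series $\sum_{d_j\le\log X} d_j^{-1}\ll \log\log X$ five times would only produce logarithmic losses — so actually one wants a slightly sharper bookkeeping. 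I would instead argue that for fixed $d_1,\ldots,d_5$ the variable $v_j$ runs over a residue class mod $d_j$ inside $|v_j|<Y$ and is further constrained by $H_j<|h_j(\mathbf{v})|\le 2H_j$ and $h_j(\mathbf{v})\equiv c_j\pmod q$; the key structural input is that $h_1,\ldots,h_5$ are five linearly independent linear forms in $v_1,\ldots,v_5$ (this is exactly $\rank(M_0)=5$ from \eqref{defineM0}–\eqref{definehi}), so the map $\mathbf{v}\mapsto(h_1(\mathbf{v}),\ldots,h_5(\mathbf{v}))$ is a linear isomorphism over $\Q$ with bounded denominators.

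Concretely, the steps are: (1) change variables to $u_j=h_j(\mathbf{v})$; since $M_0$ is invertible with $\det M_0\ne 0$ a fixed nonzero integer, the lattice $\{(h_1(\mathbf{v}),\ldots,h_5(\mathbf{v})):\mathbf{v}\in\Z^5\}$ has covolume $|\det(2M_0)|=2^5|\det M_0|$, a constant depending only on $f$. (2) In the $\mathbf{u}$-coordinates the conditions become: $u_j$ lies in the box $H_j<|u_j|\le 2H_j$, $u_j\equiv c_j\pmod q$, and $\mathbf{u}$ lies in the fixed lattice $\Lambda_f$; the side conditions $0<|v_j|<Y$ and $d_j\mid v_j$ translate into $\mathbf{u}$ lying in a sublattice of index $O(d_1\cdots d_5)$ intersected with a box of side $O(Y)$ in each $v$-direction, but since $H_j\le 2H_j < \cdots$, the box $\prod_j(H_j,2H_j]$ is the binding constraint. (3) Count lattice points: the number of $\mathbf{u}\in\Lambda_f$ with $u_j\equiv c_j\pmod q$ in a box of volume $\prod_j H_j$ is $\ll \prod_j H_j /(q^5 \operatorname{covol}(\Lambda_f))$ plus a boundary term $\ll \prod_{j\ne i}(1+H_j/q)$ summed over $i$; the hypothesis $H_i\ge q\log X$ guarantees the main term dominates the boundary terms, giving $\ll H_1\cdots H_5 q^{-5}$. (4) Re-insert the divisor sum: here one must be careful, because naively $\sum_{d_j}d_j^{-1}$ diverges like $\log\log X$; the resolution is that imposing $d_j\mid v_j$ cuts the count of admissible $u_j$ by essentially a factor $d_j/(d_j,q)$ or so, i.e. $N(d_1,\ldots,d_5)\ll H_1\cdots H_5 q^{-5}\prod_j \frac{(d_j,\text{something})}{d_j}$, so that $\sum_{d_j\le\log X}\frac1{d_j}\cdot\frac1{d_j}(\cdots)$ converges, and the only arithmetic obstruction left is the interaction between $d_j$ and $q$, which contributes the $\tau(q)$ factor after summing over $d_j$ with $d_j\mid$ (the relevant modulus). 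I expect this last step — getting the $d_j^{-2}$-type saving rather than $d_j^{-1}$, and correctly tracking the $q$-dependence so it collapses to $\tau(q)$ — to be the main obstacle; everything else is a routine lattice-point count using $\rank(M_0)=5$.

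The hard part, to emphasize, is purely the divisor bookkeeping: one needs the congruence $d_j\mid v_j$ to genuinely thin the set of $\mathbf{u}$ by a factor comparable to $d_j$ (not just $\sqrt{d_j}$ or $1$), which follows because $v_j$ is itself one of the coordinates recovered linearly from $\mathbf{u}$ via $M_0^{-1}$, so $d_j\mid v_j$ is a nontrivial congruence condition on $\mathbf{u}$ of modulus $\asymp d_j$ (up to bounded factors dividing $\det M_0$). Combining this with the earlier count, summing the resulting convergent multiple series over $d_1,\ldots,d_5$, and absorbing the $q$-gcd factors into $\tau(q)$ yields $\mathcal{H}\ll H_1H_2H_3H_4H_5 q^{-5}\tau(q)$, as claimed. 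I would present this as: reduce via \eqref{lemmaboundrhox}, change variables by $M_0$, apply a standard lattice-point estimate in a box (valid since $H_i\ge q\log X$ kills error terms), and finish with the divisor sum.
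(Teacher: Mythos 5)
Your first two steps coincide with the paper's: expand $\rho(\mathbf{v})$ via \eqref{lemmaboundrhox} into a sum over $d_1,\ldots,d_5\le\log X$ with $d_j\mid v_j$, and change variables $\mathbf{u}=2M_0\mathbf{v}$ so that the box and mod-$q$ conditions become \eqref{condition2} while $d_j\mid v_j$ becomes the linear congruence $b_{j,1}u_1+\cdots+b_{j,5}u_5\equiv 0\pmod{d_j}$ with $(b_{i,j})=(2M_0)^\ast$. But from there you take a harder road and do not complete it. You want each of the five divisor conditions to thin the count by a full factor $\asymp d_j^{-1}$, so that the divisor series converges like $\sum d_j^{-2}$; you yourself flag this as ``the main obstacle'' and then only assert it. The assertion is not innocuous: in $\mathbf{u}$-coordinates the conditions $d_j\mid v_j$ are five \emph{non-axis-aligned} congruences, so ``$d_j\mid v_j$ cuts the count of admissible $u_j$ by a factor $d_j$'' is not a statement about a single coordinate, and extracting the joint saving $\prod_j d_j^{-1}$ from a lattice-point count in a (possibly very eccentric) box $\prod_j(H_j,2H_j]$, simultaneously with the five mod-$q$ conditions, requires controlling error terms well beyond the ``volume over covolume plus boundary'' heuristic you invoke in step (3) — equality of the lattice index with $d_1\cdots d_5$ does not by itself bound the point count, and the hypothesis $H_i\ge q\log X$ is not obviously strong enough to kill all the lower-dimensional terms of such a skew intersection lattice.

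The paper sidesteps all of this with a cheaper bookkeeping device you should note: by symmetry assume $d_5=\max_j d_j$, \emph{discard} the congruences modulo $d_1,\ldots,d_4$ entirely, and absorb the sums over $d_1,\ldots,d_4\le d_5$ into a factor $(1+\log d_5)^4$. One is then left with a single congruence $b_{5,1}u_1+\cdots+b_{5,5}u_5\equiv 0\pmod{d_5}$; since some $b_{5,i}\ne 0$ (as $\rank(M_0)=5$), fixing the other four variables reduces to counting $u_1$ in an interval of length $\asymp H_1$ in a residue class modulo $[q,d_5]$, which gives $\ll H_1/[q,d_5]$ precisely because $H_1\ge q\log X\ge[q,d_5]$ — this is the real role of the hypothesis. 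The count becomes $\ll H_1\cdots H_5/(q^4[q,d_5])$, and the elementary bound $\sum_{d\le\log X}(\log d)^4(d[q,d])^{-1}\ll q^{-1}\tau(q)$ produces the $\tau(q)$ in the statement. So the gap in your write-up is exactly the step you identified: either carry out the genuinely multidimensional lattice-point estimate you propose (which would need a triangularization of the five congruences and a careful error analysis), or replace it by the paper's one-congruence trick, for which everything you need is already in place.
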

\begin{proof}By \eqref{lemmaboundrhox}, we have
\begin{align*}\mathcal{H}\ll   \sum_{d_1,d_2,d_3,d_4,d_5\le \log X} \frac{1}{d_1d_2d_3d_4d_5}\sum_{\substack{v_1,\ldots,v_5
\\ \eqref{condition1}\\ d_j|v_j (1\le j\le 5)}}1
.\end{align*}
We shall change variables by $\mathbf{u}=2M_0\mathbf{v}$, where $M_0$ is given in \eqref{defineM0}. In particular,
$u_i=h_i(\mathbf{v})$ for $1\le i\le 5$. Note that $\det(2M_0)\mathbf{v}=(2M_0)^\ast \mathbf{u}$, and we write
\begin{align*} (2M_0)^\ast =(b_{i,j})_{1\le i,j\le 5}=\begin{pmatrix}b_{1,1}  & \cdots & b_{1,5}
\\ \vdots & \cdots & \vdots
\\ b_{5,1} & \cdots  & b_{5,5} \end{pmatrix}.\end{align*}
For each $1\le j\le 5$, the condition $d_j|v_j$ implies
$b_{j,1}u_1+\cdots +b_{j,5}u_5\equiv 0\pmod{d_j}$. Then we deduce that
\begin{align*}\mathcal{H}\ll \sum_{d_1,d_2,d_3,d_4,d_5\le \log X } \frac{1}{d_1d_2d_3d_4d_5}\sum_{\substack{u_1,\ldots,u_5
\\ \eqref{condition2}
\\ b_{j,1}u_1+\cdots +b_{j,5}u_5\equiv 0\pmod{d_j}(1\le j\le 5)}}1
,\end{align*}
where the condition \eqref{condition2} is
\begin{align}\label{condition2}
 H_j<|u_j|\le2H_j \ \textrm{ and } \ u_j\equiv c_j\pmod{q}\ \textrm{ for all }\ 1\le j\le 5.\end{align}

By symmetry, we only need to prove
\begin{align}\label{boundcalH0}
\mathcal{H}_0\ll H_1H_2H_3H_4H_5 q^{-5}\tau(q),\end{align}
where\begin{align*}\mathcal{H}_0=\sum_{d_5\le \log X}\sum_{d_1,d_2,d_3,d_4\le d_5} \frac{1}{d_1d_2d_3d_4d_5}\sum_{\substack{u_1,\ldots,u_5
\\ \eqref{condition2}
\\ b_{j,1}u_1+\cdots +b_{j,5}u_5\equiv 0\pmod{d_j}(1\le j\le 5)}}1
.\end{align*}
We omit congruences modulo $d_j$ ($1\le j\le 4$) to deduce that
\begin{align*}\mathcal{H}_0\le\sum_{d_5\le \log X}\sum_{d_1,d_2,d_3,d_4\le d_5} \frac{1}{d_1d_2d_3d_4d_5}\sum_{\substack{u_1,\ldots,u_5
\\ \eqref{condition2}
\\ b_{5,1}u_1+\cdots +b_{5,5}u_5\equiv 0\pmod{d_5}}}1
.\end{align*}
Now the above innermost summation is independent of $d_1,\ldots,d_4$, and the summations over $d_1,\ldots,d_4$ contribute at most $(1+\log d_5)^4$ to $\mathcal{H}_0$. Then we obtain
\begin{align}\label{boundH0-1}\mathcal{H}_0 \ll
 \sum_{d_5\le \log X} \frac{(1+\log d_5)^4}{d_5}\sum_{\substack{u_1,\ldots,u_5
\\ \eqref{condition2}
\\ b_{5,1}u_1+\cdots +b_{5,5}u_5\equiv 0\pmod{d_5}}}1
.\end{align}
Note that $\rank(M_0)=5$, and thus at least one of $b_{5,1},\ldots, b_{5,5}$ is nonzero. Without loss of generality, we assume $b_{5,1}\not=0$.
Then for any fixed $u_2,u_3,u_4,u_5$, there are at most $O(\frac{H_1}{[q,d_5]})$ possible choices of $u_1$ due to the congruences modulo $q$ and $d_5$, respectively.
Therefore, we obtain
\begin{align}\label{boundH0inner}\sum_{\substack{u_1,\ldots,u_5
\\ \eqref{condition2}
\\ b_{5,1}u_1+\cdots +b_{5,5}u_5\equiv 0\pmod{d_5}}}1
\ll \frac{H_1H_2H_3H_4H_5}{q^4[q,d_5]}.\end{align}
We conclude \eqref{boundcalH0} from \eqref{boundH0-1} and \eqref{boundH0inner} in combination with the following elementary inequality
$$\sum_{d\le \log X}\frac{\log d}{d[q,d]}\ll q^{-1}\tau(q).$$
This completes the proof.
\end{proof}

\begin{lemma}\label{lemmaRboundfinal}Let $R(\alpha)$ be given in \eqref{defineRalpha} with $\alpha$ satisfying \eqref{assumptiononalpha}.
Then we have\begin{align*}R(\alpha)\ll \frac{Y^{15}\tau(q)^6}{\phi(q)^5\phi(W)^{15}}
\big(1+(Y/W)^2|\beta|\big)^{-5}.
\end{align*}
\end{lemma}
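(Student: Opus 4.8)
\textbf{Proof proposal for Lemma \ref{lemmaRboundfinal}.}
The plan is to start from the bound for $R(\alpha)$ already obtained in Lemma \ref{lemmaboundRfirst}, namely
$$R(\alpha)\ll \frac{1}{\phi(qW)^5}\sum_{X^{1/3}<|\mathbf{u}|\ll Y/W}\eta(\mathbf{u})\Psi(\mathbf{u})+\frac{Y^{15}}{(\log X)^{400s}},$$
and to estimate the main sum by inserting the sieve bound for $\Psi(\mathbf{u})$. The first step is to bound $\Psi(\mathbf{u})$: fixing $\mathbf{u}$, the condition $h_j(\mathbf{y}-\mathbf{z})=u_j$ for all $j$ together with $\rank(M_0)=5$ forces $\mathbf{v}=\mathbf{y}-\mathbf{z}$ into at most $O(1)$ residue classes (indeed, since $2M_0$ is invertible over $\mathbb{Q}$, $\mathbf{v}=(2M_0)^{-1}\mathbf{u}$ is determined; over $\mathbb{Z}$ one gets $O(1)$ choices from $\det(2M_0)\mathbf{v}=(2M_0)^\ast\mathbf{u}$). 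For each admissible $\mathbf{v}$, the number of pairs $(\mathbf{y},\mathbf{z})$ with $\mathbf{y}-\mathbf{z}=\mathbf{v}$, $\mathbf{y}\equiv\mathbf{z}\equiv b\pmod W$, and with $\Lambda_{b,W;I}(\mathbf{y})\Lambda_{b,W;I}(\mathbf{z})$ weighting prime pairs, factorizes over the five coordinates as $\prod_{j=1}^5\Upsilon(b,W;I;v_j)$, so Lemma \ref{lemmasieve} gives
$$\Psi(\mathbf{u})\ll \Big(\frac{Y}{\phi(W)(\log X)^2}\Big)^5\prod_{j=1}^5\rho(Wv_j)\ll \frac{Y^5}{\phi(W)^5(\log X)^{10}}\rho(\mathbf{v}),$$
where I absorb the factors $\rho(W)^5\ll (\log\log X)^{5}$ harmlessly (or note $W\le\log X$ gives $\rho(W)\ll 1$ after adjusting constants; more carefully one keeps $\rho(Wv_j)\ll \rho(W)\rho(v_j)$ and $\rho(W)$ is bounded since $W\le\log X$ has $O(\log\log X/\log\log\log X)$ prime factors — this contributes only a $(\log X)^{o(1)}$ factor which can be folded into $\tau(q)^6$ or simply the crude $\tau$-type saving). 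I will in fact keep $\rho$ of the $v_j$ only and handle $\rho(W)$ by remarking it is $O(1)$ for $W\le\log X$ up to the constant depending on $f$, as throughout the paper.

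The second step is the dyadic decomposition. Recall $\eta(u)=(u,q)\min(Y,|\beta u/W|^{-2}Y^{-1})$. Partition the range $X^{1/3}<|u_j|\ll Y/W$ dyadically into $H_j<|u_j|\le 2H_j$, and partition the residue $u_j\bmod q$ into classes $c_j$; on each such box $\eta(u_j)$ is essentially constant, of size $(c_j,q)\min(Y,|\beta H_j/W|^{-2}Y^{-1})$ (with the gcd $(u_j,q)$ controlled on average by summing $(c_j,q)$ over $c_j\bmod q$, which contributes $\sum_{c\bmod q}(c,q)\ll q\tau(q)$ per coordinate). On each box, $\sum \rho(\mathbf{v})$ over the relevant $\mathbf{v}$ is exactly $\mathcal{H}(H_1,\ldots,H_5;c_1,\ldots,c_5;q)$, so Lemma \ref{lemmacalH} applies provided $H_j\ge q\log X$ — which holds since $H_j>X^{1/3}$ and $q\le Q_0=(\log X)^{20s}$. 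This yields, for the contribution of one dyadic-and-congruence box,
$$\sum_{\text{box}}\eta(\mathbf{u})\Psi(\mathbf{u})\ll \frac{Y^5}{\phi(W)^5(\log X)^{10}}\cdot H_1\cdots H_5 q^{-5}\tau(q)\cdot\prod_{j=1}^5\Big((c_j,q)\min\big(Y,|\tfrac{\beta H_j}{W}|^{-2}Y^{-1}\big)\Big).$$

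The third step is to sum over the boxes. Summing over $c_1,\ldots,c_5\bmod q$ replaces $\prod(c_j,q)$ by $\prod_j\big(\sum_{c_j\bmod q}(c_j,q)\big)\ll (q\tau(q))^5$, killing the $q^{-5}$ and producing a further $\tau(q)^5$; combined with the $\tau(q)$ from Lemma \ref{lemmacalH} this is $\tau(q)^6$. Summing over the dyadic parameters $H_j$ (powers of two in the range $[X^{1/3},Y/W]$), each coordinate contributes $\sum_{H_j} H_j\min(Y,|\beta H_j/W|^{-2}Y^{-1})\ll Y^2(1+(Y/W)^2|\beta|)^{-1}$ by the standard estimate splitting at $H_j\asymp W/|\beta|^{1/2}$ (or using $H_j\ll Y/W$ when $\beta$ is tiny). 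Raising to the fifth power over the five coordinates gives $Y^{10}(1+(Y/W)^2|\beta|)^{-5}$. Collecting everything:
$$R(\alpha)\ll \frac{1}{\phi(qW)^5}\cdot\frac{Y^5}{\phi(W)^5(\log X)^{10}}\cdot\tau(q)^6\cdot Y^{10}\big(1+(Y/W)^2|\beta|\big)^{-5}+\frac{Y^{15}}{(\log X)^{400s}},$$
and since $\phi(qW)^5=\phi(q)^5\phi(W)^5$ (as $(q,W)=1$, which holds because $q\le Q_0<W$ is false in general — rather one uses $(a,q)=1$ and handles any common factor of $q,W$ trivially, or simply notes $\phi(qW)\ge\phi(q)\phi(W)$ always, which suffices for an upper bound on $R$), the first term is $\ll Y^{15}\tau(q)^6\phi(q)^{-5}\phi(W)^{-15}(1+(Y/W)^2|\beta|)^{-5}$. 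Finally the error term $Y^{15}/(\log X)^{400s}$ is dominated by the main term since $\tau(q)^6\phi(q)^{-5}\gg \phi(W)^{-1}(\log X)^{-100s}$ (crudely: $\phi(q)\le q\le(\log X)^{20s}$, $\phi(W)\le W\le\log X$, and the $(1+\cdots)^{-5}$ factor is $\ge (Y/W)^{-10}|\beta|^{-5}\gg$ a negative power of $X$ only when $\beta$ is near its maximum, but then $(Y/W)^2|\beta|\ll Q_0$ so the factor is $\gg Q_0^{-5}$; in all cases the main term exceeds $Y^{15}(\log X)^{-200s}$, hence dominates). This gives the claimed bound.

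\textbf{Main obstacle.} The delicate point is not any single estimate but the bookkeeping that ensures the final saving is clean: the sieve produces $(\log X)^{-10}$, which must survive being multiplied against $Y^5$ from the $\Psi$ factorization while the $Y^{15}$ and the decay factor $(1+(Y/W)^2|\beta|)^{-5}$ come out with exactly the right exponents — in particular one must verify that summing the weight $\eta$ over both the dyadic scales \emph{and} the $q$ residue classes produces exactly $\tau(q)^6$ and no stray power of $q$ or of $\log X$, and that the requirement $H_j\ge q\log X$ needed for Lemma \ref{lemmacalH} is guaranteed by $|u_j|>X^{1/3}$ against $q\le Q_0$. The other place requiring care is the $\rho(Wv_j)$ versus $\rho(v_j)$ issue: one should record $\rho(Wv_j)\ll \rho(W)\rho(v_j)$ and observe $\rho(W)\ll_f 1$ for $W\le\log X$ (the number of prime factors of such $W$ being $o(\log\log X)$ makes $\rho(W)=1+o(1)$), so this costs nothing beyond the implied constant.
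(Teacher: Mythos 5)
Your overall architecture is the same as the paper's (start from Lemma \ref{lemmaboundRfirst}, sort $\mathbf{u}$ into residue classes mod $q$ and dyadic boxes, bound $\Psi$ via Lemma \ref{lemmasieve}, apply Lemma \ref{lemmacalH}, then sum the weight over dyadic scales), but the bookkeeping of the powers of $W$ — which is exactly the delicate point here — contains a genuine error. Your sieve step asserts $\Psi(\mathbf{u})\ll Y^5\phi(W)^{-5}(\log X)^{-10}\rho(\mathbf{v})$, treating the $\Lambda_{b,W;I}$-weighted sum as if it were the unweighted count $\prod_j\Upsilon(b,W;I;v_j)$. It is not: the ten von Mangoldt weights contribute a factor $(\log X)^{10}$, which exactly cancels the $(\log X)^{-2}$ per coordinate coming from \eqref{upperboundtwin}. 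The correct bound (this is \eqref{boundinnerPsi} in the paper) is $\ll Y^5\rho(W)^5\phi(W)^{-5}\rho(\mathbf{v})$, with no logarithmic saving; your claimed saving is in fact false (test $\alpha$ near $0$: $R(\alpha)\asymp (Y/\phi(W))^{15}$). This matters because your final assembly silently uses that spurious $(\log X)^{-10}$ to reach $\phi(W)^{-15}$: in your dyadic summation you bound each coordinate by $\sum_H H\min(Y,|\beta H/W|^{-2}Y^{-1})\ll Y^2(1+(Y/W)^2|\beta|)^{-1}$, whereas the true bound — and the one the paper needs, \eqref{boundT1}--\eqref{boundcalT} — is $\ll \frac{Y^2}{W}(1+(Y/W)^2|\beta|)^{-1}$, since the dyadic range is capped at $Y/W$ (resp.\ the splitting point is $W/(|\beta|Y)$). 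Dropping that $1/W$ loses $W^5$ overall, and your $(\log X)^{-10}$ is what covers it numerically. Once the sieve step is corrected, your argument as written yields only $R(\alpha)\ll Y^{15}\rho(W)^5\tau(q)^6\phi(q)^{-5}\phi(W)^{-10}(1+(Y/W)^2|\beta|)^{-5}$, short of the stated bound by a factor of order $\phi(W)^5$ — a loss the paper emphasizes is fatal downstream (even $W^{0.01}$ cannot be afforded in Proposition \ref{proprestriction}).

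Two further points. Your dismissal of $\rho(W)$ via ``$\rho(W)=1+o(1)$ for $W\le\log X$'' is false: if $W$ is a primorial of size about $\log X$ then $\rho(W)\asymp\log\log\log X$. The clean way to finish, as in the paper, is to keep $\rho(W)^5$ together with the $W^{-5}$ from the dyadic sum and use $\rho(W)\phi(W)=W\prod_{p|W}(1-p^{-2})\le W$, which converts $\rho(W)^5/(W^5\phi(W)^{10})$ into $\phi(W)^{-15}$. The remaining ingredients in your write-up — the $\sum_{c\bmod q}(c,q)\ll q\tau(q)$ bookkeeping giving $\tau(q)^6$, the verification $H_j>X^{1/3}\ge q\log X$ needed for Lemma \ref{lemmacalH}, the use of $\phi(qW)\ge\phi(q)\phi(W)$, and the absorption of the $Y^{15}(\log X)^{-400s}$ error under \eqref{assumptiononalpha} — are correct and agree with the paper.
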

\begin{proof}By Lemma \ref{lemmaboundRfirst}, we first introduce congruence conditions to conclude
\begin{align*}R(\alpha)\ll \frac{1}{\phi(qW)^5}
\sum_{1\le \mathbf{c}\le q}(\mathbf{c},q)
\sum_{\substack{X^{1/3}< |\mathbf{u}|\ll \frac{Y}{W}
 \\ \mathbf{u}\equiv \mathbf{c}\pmod{q}}}\theta(\mathbf{u})\Psi(\mathbf{u})+\frac{Y^{15}}{(\log X)^{400s}}
,\end{align*}
where
\begin{align}\label{definetheta}\theta(\mathbf{u})=\prod_{j=1}^5\min(Y,\ |\frac{\beta u_j}{W}|^{-2}Y^{-1}).\end{align}
By the dyadic argument, we have
\begin{align}\label{boundRalpha6}R(\alpha)\ll \frac{1}{\phi(qW)^5}
\sum_{1\le \mathbf{c}\le q}(\mathbf{c},q)
\sum_{\substack{ H_1,\ldots,H_5 \ll Y/W \\ H_i=2^{l_i}X^{1/3} (1\le i\le 5)}}\theta(H_1,\ldots,H_5)\mathfrak{H}+\frac{Y^{15}}{(\log X)^{400s}}
,\end{align}
where the innermost multiple summation is taken over $H_i$ in the form $H_i=2^{l_i}X^{1/3}$ with integers $l_i\ge 0$, and $\mathfrak{H}:=\mathfrak{H}(H_1,\ldots,H_5;c_1,\ldots,c_5;q)$ is
\begin{align}\label{definefrakH}\mathfrak{H}=
\sum_{\substack{H_i<|u_i|\le 2H_i (1\le i\le 5)
 \\ \mathbf{u}\equiv \mathbf{c}\pmod{q}}}\Psi(\mathbf{u})
.\end{align}
On recalling the definition \eqref{definePsi}, we have
\begin{align}\label{boundPsi}\Psi(\mathbf{u})=\sum_{\substack{
1\le |\mathbf{v}|\le Y/W
\\ h_i(\mathbf{v})=u_i(1\le i\le 5) }} \sum_{\substack{
\mathbf{y},\mathbf{z}
\\ \mathbf{y}-\mathbf{z}=\mathbf{v} }}\Lambda_{b,W;I}(\mathbf{y})\Lambda_{b,W;I}( \mathbf{z})
,\end{align}
where $1\le |\mathbf{v}|\le Y/W$ here means $1\le |v_j|\le Y/W$ for all $1\le j\le 5$.

We deduce by Lemma \ref{lemmasieve} that
\begin{align}\label{boundinnerPsi}\sum_{\substack{
\mathbf{y},\mathbf{z}
\\ \mathbf{y}-\mathbf{z}=\mathbf{v} }}\Lambda_{b,W;I}(\mathbf{y})\Lambda_{b,W;I}( \mathbf{z}) \ll
\frac{Y^5\rho(W)^5}{\phi(W)^5}\rho(\mathbf{v}).\end{align}
Now we conclude from \eqref{definefrakH}, \eqref{boundPsi} and \eqref{boundinnerPsi} that
\begin{align}\label{boundfrakH}\mathfrak{H}\ll \frac{Y^5\rho(W)^5}{\phi(W)^5}
\sum_{\substack{H_i<|u_i|\le 2H_i (1\le i\le 5)
 \\ \mathbf{u}\equiv \mathbf{c}\pmod{q}}}\sum_{\substack{
1\le |\mathbf{v}|\le Y/W
\\ h_i(\mathbf{v})=u_i(1\le i\le 5) }}\rho(\mathbf{v})
.\end{align}
Note that the multiple summations in \eqref{boundfrakH} coincide with the definition of $\mathcal{H}$ in \eqref{definecalH}. Therefore, applying Lemma \ref{lemmacalH}, we obtain
\begin{align}\label{boundfrakH2}
\mathfrak{H}\ll \frac{Y^5\rho(W)^5}{\phi(W)^5}H_1H_2H_3H_4H_5 q^{-5}\tau(q).\end{align}
We put \eqref{boundfrakH2} into \eqref{boundRalpha6} to deduce that
\begin{align}\label{boundRalpha}R(\alpha)\ll \frac{Y^5\rho(W)^5\tau(q)^6}{\phi(qW)^5\phi(W)^5}\cdot \mathcal{T}^5+\frac{Y^{15}}{(\log X)^{400s}},\end{align}
where
\begin{align*}\mathcal{T}=
\sum_{\substack{ X^{1/3} <H\ll Y/W \\ H=2^{l}X^{1/3} }}H\min(Y,\ |\frac{\beta H}{W}|^{-2}Y^{-1}) .\end{align*}

To deal with $\mathcal{T}$, we first consider the case $|\beta|\le (Y/W)^{-2}$, and deduce that
\begin{align}\label{boundT1}\mathcal{T} \ll
\sum_{\substack{ X^{1/3} \le H\ll Y/W \\ H=2^{l}X^{1/3} }}HY \ll \frac{Y^2}{W}.\end{align}
If $|\beta|> (Y/W)^{-2}$, then we have
\begin{align}\label{boundT2}\mathcal{T}
 \ll
\sum_{\substack{ X^{1/3} < H \le \frac{W}{|\beta|Y}\\ H=2^{l}X^{1/3} }}HY +
\sum_{\substack{ H> \frac{W}{|\beta|Y}\\ H=2^{l}X^{1/3} }}\frac{W^2}{H|\beta|^2Y}\ll \frac{W}{|\beta|}.\end{align}
Therefore, we conclude from \eqref{boundT1} and \eqref{boundT2} that
\begin{align}\label{boundcalT}\mathcal{T} \ll \frac{Y^2}{W}\big(1+(Y/W)^2|\beta|\big)^{-1}.\end{align}
Finally, we obtain by \eqref{boundRalpha} and \eqref{boundcalT} that
\begin{align*}R(\alpha)\ll \frac{Y^{15}\rho(W)^5\tau(q)^6}{\phi(q)^5\phi(W)^{10}W^5}
\big(1+(Y/W)^2|\beta|\big)^{-5}+\frac{Y^{15}}{(\log X)^{400s}}.\end{align*}
In view of \eqref{assumptiononalpha}, the above estimate holds with $Y^{15}(\log X)^{-400s}$ omitted.
This completes the proof on noting that $\rho(W)/W\ll \phi(W)$.
\end{proof}

\vskip3mm

\noindent {\it Proof of Proposition \ref{proprestriction}}. In view of Lemma \ref{lemmaliu2}, we only need to prove
\begin{align*}\int_{\mathfrak{M}(Q_0)\setminus \mathfrak{M}(Q)}|S(\alpha)|d\alpha\ll \frac{\delta^{s-10}W^2Y^{s-2}}{\phi(W)^s}Q^{-\frac{10}{21}}.\end{align*}
For $\alpha\in \mathfrak{M}(Q_0)$, we can represent $\alpha$ uniquely in the form $\alpha=\frac{a}{q}+\beta$ with $a,q$ and $\beta$ satisfying \eqref{assumptiononalpha}. By Lemma \ref{lemmaStoR} and Lemma \ref{lemmaRboundfinal}, we obtain
\begin{align*}S(\alpha)\ll \frac{\delta^{s-10} Y^{s}}{\phi(W)^{s}}q^{-\frac{5}{2}+\varepsilon}
\big(1+(Y/W)^2|\beta|\big)^{-\frac{5}{2}}.\end{align*}

For $\alpha=\frac{a}{q}+\beta\in \mathfrak{M}(Q_0)\setminus \mathfrak{M}(Q)$, we have
\begin{align*}
\big(q+q(Y/W)^2|\beta|\big)^{-\frac{10}{21}}\ll Q^{-\frac{10}{21}},\end{align*}
and we deduce that
\begin{align*}\int_{\mathfrak{M}(Q_0)\setminus \mathfrak{M}(Q)}|S(\alpha)|d\alpha\ll & \frac{\delta^{s-10} Y^{s}}{\phi(W)^{s}} Q^{-\frac{10}{21}}
\int_{\mathfrak{M}(Q_0)}\big(q+q(Y/W)^2|\beta|\big)^{-2-\frac{1}{42}+\varepsilon}d\alpha
\\  \ll & \frac{\delta^{s-10}W^2Y^{s-2}}{\phi(W)^s}Q^{-\frac{10}{21}}.\end{align*}
This completes the proof of Proposition \ref{proprestriction}.

\vskip3mm

\section{Roth's density increment argument}

In this section, we always assume $f$ is translation invariant.
For $\mathcal{A}\subseteq \mathcal{P}\cap [X/2,\,X]$, we define \begin{align*}
\nu_f^\ast (\mathcal{A})=\sum_{\substack{x_1,\ldots,x_s\in \mathcal{A}\cap I\\ f(\mathbf{x})=0 \\ \mathbf{x}\equiv b\pmod{W} }}\Lambda(x_1)\cdots \Lambda(x_s).\end{align*}
Recalling the definition of $\mathcal{A}'$ in \eqref{defineAp}, we can represent $\nu_f^\ast(\mathcal{A})$ in the form
\begin{align*}
\nu_f^\ast(\mathcal{A})=
\sum_{\substack{\mathbf{x}\\ f(\mathbf{x})=0  }}\prod_{j=1}^s\Big(1_{\mathcal{A}'}\cdot\Lambda_{b,W;I}(x_j)\Big).
\end{align*}

For $(c,r)=1$ and $\mathcal{B}\subseteq \N$, we define
\begin{align*}\mathcal{N}_{c,r;I}(\mathcal{B})=
\sum_{\substack{x\equiv c\pmod{r}\\ x\in \mathcal{B}\cap I}}\Lambda(x).\end{align*}
Now we introduce the (relative) density
\begin{align}\label{definerelativedensity}\delta_{c,r;I}(\mathcal{B})
=\frac{\mathcal{N}_{c,r;I}(\mathcal{B})}{\mathcal{N}_{c,r;I}(\N)}.\end{align}

On choosing
$$\delta=\delta_{b,W;I}(\mathcal{A}),$$
we obtain
\begin{align}\label{choosedelta}
\sum_{\substack{x }}1_{\mathcal{A}'}\cdot\Lambda_{b,W;I}(x)-\delta\sum_{x}\Lambda_{b,W;I}(x)
=0.
\end{align}
Therefore, we may compare the function $1_{\mathcal{A}'}\cdot\Lambda_{b,W;I}(x)$ with $\delta\Lambda_{b,W;I}(x)$, and for this purpose, we introduce
\begin{align}\label{definevarpi}\varpi=1_{\mathcal{A}'}\cdot \Lambda_{b,W;I}-\delta\Lambda_{b,W;I} .\end{align}
Note that \begin{align*}
\nu_f(b,W;I)=
\sum_{\substack{\mathbf{x}\\ f(\mathbf{x})=0  }}\prod_{j=1}^s\Lambda_{b,W;I}(x_j).
\end{align*}
Then on replacing $1_{\mathcal{A}'}\cdot \Lambda_{b,W;I}$ by $\varpi+\delta\Lambda_{b,W;I}$, we can represent $\nu_f^\ast(\mathcal{A})-\delta^s\nu_f(b,W;I)$ in the form
\begin{align}\label{representnu}
\nu_f^\ast(\mathcal{A})-\delta^s\nu_f(b,W;I)=\sum_{\varpi_1,\ldots,\varpi_s}\nu_f^\ast(\varpi_1,\ldots,\varpi_s),
\end{align}
where\begin{align*}\nu_f^\ast(\varpi_1,\ldots,\varpi_s)=
\sum_{\substack{\mathbf{x}\\ f(\mathbf{x})=0  }}\prod_{j=1}^s\varpi_j(x_j)
\end{align*}
and the summation in \eqref{representnu} is taken over
\begin{align}\label{conditionvarpi}\varpi_1,\ldots,\varpi_s\in \{\varpi,\ \delta\Lambda_{b,W;I}\} \ \textrm{ with }\ \ \varpi_j=\varpi \ \textrm{
for some }\ \ 1\le j\le s.\end{align}
For $\boldsymbol{\varpi}=(\varpi_1,\ldots,\varpi_s)$ satisfying \eqref{conditionvarpi}, we have
\begin{align}\label{circle2}\nu_f^\ast(\boldsymbol{\varpi})=\int_0^1S(\alpha;\boldsymbol{\varpi})d\alpha.
\end{align}

\begin{lemma}\label{lemmadensityincmajor}Suppose that $\boldsymbol{\varpi}=(\varpi_1,\ldots,\varpi_s)$ satisfies \eqref{conditionvarpi}. Let $\delta=\delta_{b,W;I}(\mathcal{A})$, and let $\delta\ge (\log X)^{-50}$.  Let $Q\le (\log X)^{50}$.
Suppose that
\begin{align}\label{lowermajor}\Big|\int_{\mathfrak{M}(Q)}S(\alpha;\boldsymbol{\varpi})d\alpha\Big|\gg \frac{\delta^sW^2Y^{s-2}}{\phi(W)^{s}}.\end{align}
Then there exist $c\in \Z$, $q\in \N$, a subinterval $J\subseteq I$ and $C_f>0$ such that
 \begin{align*}(c,Wq)=1,\ q\le Q, \ \ |J|\ge C_f\delta Q^{-3.1}|I|\ \ \textrm{ and }\ \ \delta_{c,Wq;J}(\mathcal{A})\ge \delta(1+C_fQ^{-3.1}).
 \end{align*}
\end{lemma}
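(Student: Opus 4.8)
The plan is to exploit the hypothesis \eqref{lowermajor} together with the asymptotic expansion of $S(\alpha;\boldsymbol{\varpi})$ on the major arcs, and then to convert an ``averaged'' density increment on progressions into a genuine increment on a single progression restricted to a subinterval, following Roth's classical argument. First I would restrict $\alpha$ to a single arc $\mathfrak{M}(q,a;Q)\subseteq \mathfrak{M}(Q)$ and expand each factor of $S(\alpha;\boldsymbol{\varpi})$ using Lemma~\ref{lemmaTWpartial}. Writing $\widehat{\varpi_j}$ for the relevant complete exponential sum in $c$ modulo $qW$ attached to $\varpi_j$ (which, by the choice $\delta=\delta_{b,W;I}(\mathcal{A})$ and \eqref{choosedelta}, has mean zero in the variable $a$ in an appropriate sense), the main term on $\mathfrak{M}(q,a;Q)$ factors as a product of a Gauss-type sum and an integral of $\prod_j \widehat{w}(\beta h_j)$-type quantities. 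The key point is that $S(\alpha;\boldsymbol\varpi)$ on the major arcs is, up to an admissible error (controlled by Lemma~\ref{lemmaliu2} and the Siegel--Walfisz input already used in Section~3), governed by the Fourier coefficients $\sum_{x\equiv c\,(qW)} \varpi(x)e(\dots)$; since $\boldsymbol\varpi$ has at least one coordinate equal to $\varpi$, the hypothesis \eqref{lowermajor} forces one such coefficient to be large.

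Next I would make this quantitative. From \eqref{lowermajor}, after summing the major-arc contributions over $q\le Q$ and $(a,q)=1$ and bounding the tails of the $\beta$-integral by \eqref{boundcalK}-type estimates, there must exist $q\le Q$, a residue $c$ with $(c,Wq)=1$, and a choice of sign, such that
\begin{align*}
\Big|\sum_{\substack{x\in I\\ x\equiv c\ (\mathrm{mod}\ Wq)}}\varpi(x)\Big|\gg \frac{\delta Y}{q\,\phi(Wq)}\cdot Q^{-c_0}
\end{align*}
for some absolute constant $c_0$ (here I expect $c_0$ around $2$ or $3$, and the stated exponent $3.1$ leaves room for the loss in passing from $\mathfrak{M}(Q)$ back down to a single congruence class and in the subsequent subinterval step). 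Recalling $\varpi = 1_{\mathcal{A}'}\cdot\Lambda_{b,W;I}-\delta\Lambda_{b,W;I}$ and translating back via $x\mapsto Wx+b$, this says exactly that the weighted count $\mathcal{N}_{c',Wq;I}(\mathcal{A})$ deviates from $\delta\,\mathcal{N}_{c',Wq;I}(\N)$ by a positive proportion $\gg \delta Q^{-c_0}$ of the total, for a suitable $c'$ lying in the class $b$ modulo $W$ and $c$ modulo $q$.

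Then comes the standard Roth dichotomy. Either the deviation is positive, in which case $\delta_{c',Wq;I}(\mathcal{A})\ge \delta(1+c_1 Q^{-c_0})$ already on $J=I$ and we are done; or the deviation is negative on the full progression, but \eqref{choosedelta} (the global mean-zero relation for $\varpi$, which persists when we further decompose $I$ into $\asymp Q^{3.1}$ dyadic-length subintervals) forces $\mathcal{A}$ to be \emph{denser} than $\delta$ on some subinterval $J\subseteq I$ of the progression $c'\ (\mathrm{mod}\ Wq)$, with $|J|\gg \delta Q^{-3.1}|I|$ and $\delta_{c',Wq;J}(\mathcal{A})\ge\delta(1+c_1 Q^{-3.1})$. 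The loss $Q^{-3.1}$ in the interval length comes from needing the subinterval long enough that the prime-counting main term $\mathcal{N}_{c',Wq;J}(\N)$ still satisfies Siegel--Walfisz with $Wq\le (\log X)^{100}$ and from the pigeonholing over subintervals; using partial summation on $\Lambda$ and the Brun--Titchmarsh / Siegel--Walfisz estimates for $\psi(y;Wq,c')$ keeps all error terms negligible provided $|J|\ge C_f\delta Q^{-3.1}|I|$. The main obstacle will be bookkeeping the exponents: one must verify that the power of $Q$ lost in (i) extracting a single large Fourier coefficient from the major-arc integral, (ii) converting a coefficient bound into a density deviation, and (iii) localizing to a subinterval, together stay below $3.1$, so that the increment $1+C_fQ^{-3.1}$ genuinely improves on $\delta$; this is the delicate quantitative heart of the lemma, and everything else is routine manipulation of the circle-method main terms already developed in Sections~2--3.
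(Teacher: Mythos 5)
There is a genuine gap, and it sits exactly at the point you call routine. Your plan evaluates $S(\alpha;\boldsymbol{\varpi})$ on the major arcs by a main-term expansion in the spirit of Lemma~\ref{lemmaTWpartial}, i.e.\ by Siegel--Walfisz. But Lemma~\ref{lemmaTWpartial} applies only to the weight $\Lambda_{w;b,W;I}$; the vector $\boldsymbol{\varpi}$ has at least one coordinate equal to $\varpi=1_{\mathcal{A}'}\cdot\Lambda_{b,W;I}-\delta\Lambda_{b,W;I}$, and for an arbitrary dense set $\mathcal{A}'$ there is no equidistribution theorem in progressions, so no ``main term on $\mathfrak{M}(q,a;Q)$'' can be extracted for that coordinate (the paper flags precisely this obstruction after Lemma~\ref{lemmaliu2}); moreover $f$ is a general, non-diagonal form, so $S(\alpha;\boldsymbol{\varpi})$ does not factor into one-variable sums in the first place. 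The paper's proof avoids any evaluation: it isolates the $\varpi$-variable, applies partial summation in that variable alone (using translation invariance of $f$ to get $\frac{\partial}{\partial\gamma}e\big(\beta f(\gamma,x_2,\ldots,x_s)\big)\ll QW/(qY)$), splits into residue classes mod $q$, and bounds the whole major-arc integral by the single quantity $V_{\max}$, the maximal partial sum $V(q,\gamma,u)$ of $\varpi$ in \eqref{defineVqgammau}. Comparing with \eqref{lowermajor} gives $V_{\max}\gg \delta Y/(\phi(W)Q^{3})$, and --- importantly --- the subinterval $J$ of the conclusion is already built into this quantity through the cutoff $\gamma$; it is not produced by a later pigeonhole over $\asymp Q^{3.1}$ subintervals.

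The second gap is in your dichotomy. You extract a large value of $\big|\sum_{x\in I,\,x\equiv c'\,(Wq)}\varpi(x)\big|$ and claim that if this deviation is negative, the mean-zero relation \eqref{choosedelta} forces $\mathcal{A}$ to be denser on some subinterval of the \emph{same} progression $c'\ (\mathrm{mod}\ Wq)$. That does not follow: a negative total on one class says nothing about its subintervals; positivity must be recovered by moving to a \emph{different} residue class modulo $Wq$ (and/or the complementary segment of the interval). This is exactly what the paper's argument with $V(q,\gamma,u)$, $V'(q,\gamma,u)$ and the identity $\sum_u\big(V(q,\gamma,u)+V'(q,\gamma,u)\big)=\sum_x\varpi(x)=0$ accomplishes, at the cost of the factor $1/(2q)$ that (together with the Brun--Titchmarsh-type upper bound \eqref{lowerJ}) accounts for the exponent $3.1$; your attribution of the loss to Siegel--Walfisz validity on short intervals is not where the exponent comes from. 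So the skeleton (large major-arc integral $\Rightarrow$ large deviation on a progression $\Rightarrow$ Roth increment) is right, but the two steps you defer --- evaluating the major arcs for $\varpi$ and converting a signed deviation into a positive increment on a subinterval --- are the actual content, and as proposed the first is unavailable and the second is incorrectly argued.
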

\begin{proof}
Without loss of generality, we assume $\varpi_1=\varpi$. For $\alpha\in \mathfrak{M}(Q)$, one has the unique rational approximation
\begin{align}\label{repalpha}\alpha =\frac{a}{q}+\beta,\ \ 1\le a\le q\le Q, \ (a,q)=1\ \textrm{ and } |\beta|\le \frac{QW^2}{qY^2}.\end{align}
We have
\begin{align}\label{Salphavarpi1}
S(\frac{a}{q}+\beta;\boldsymbol{\varpi})=\sum_{x_2,\ldots,x_s}\prod_{j=2}^s\varpi_j(x_j)\Xi(x_2,\ldots,x_s;q,a,\beta),\end{align}
where
\begin{align*}\Xi(x_2,\ldots,x_s;q,a,\beta)=\sum_{\frac{Y_0-b}{W}<x_1\le \frac{Y_0-b+Y}{W}}
\varpi(x_1)e\big((\frac{a}{q}+\beta)f(x_1,x_2,\ldots,x_s)\big).\end{align*}
We introduce
\begin{align*}U(\gamma):=U(x_2,\ldots,x_s;q,a,\gamma)=
\sum_{\frac{Y_0-b}{W}<x_1\le \gamma}\varpi(x_1)e\big(\frac{a}{q}f(x_1,x_2,\ldots,x_s)\big),\end{align*}
and then we deduce by the partial summation formula that
\begin{align}\label{boundxi}\Xi(x_2,\ldots,x_s;q,a,\beta)=\int_{\frac{Y_0-b}{W}}^{\frac{Y_0-b+Y}{W}}e\big(\beta f(\gamma,x_2,\ldots,x_s)\big)d U(\gamma).\end{align}
Let
\begin{align*}U_{\max}=\sup_{x_2,\ldots,x_s}\,\sup_{q\le Q}\, \sup_{\substack{1\le a\le q
\\ (a,q)=1}}\,\sup_{\frac{Y_0-b}{W}<\gamma \le \frac{Y_0-b+Y}{W}}|U(x_2,\ldots,x_s;q,a,\gamma)|.\end{align*}
Since $f$ is translation invariant, for $\frac{Y_0-b}{W}\le \gamma \le \frac{Y_0-b+Y}{W}$, we have
\begin{align}\label{boundpartial}\frac{\partial}{\partial \gamma}e\big(\beta f(\gamma,x_2,\ldots,x_s)\big)\ll \frac{QW}{qY}.\end{align}
We obtain from \eqref{boundxi} and \eqref{boundpartial}
\begin{align}\label{boundXi}\Xi(x_2,\ldots,x_s;q,a,\beta)\ll \frac{Q}{q}U_{\max}.\end{align}
We make use of the estimate
\begin{align*}\sum_{\frac{Y_0-b}{W}<x\le \frac{Y_0-b+Y}{W}}
\varpi_j(x)\ll  \frac{\delta Y}{\phi(W)}\end{align*}
for $2\le j\le s$, and deduce from \eqref{Salphavarpi1} and \eqref{boundXi} that
\begin{align}\label{Salphavarpi2}
S(\frac{a}{q}+\beta;\boldsymbol{\varpi})\ll  \frac{\delta^{s-1}Y^{s-1}Q}{\phi(W)^{s-1}q}U_{\max}.\end{align}

For $1\le u\le q$, we introduce
\begin{align}\label{defineVqgammau}V(q,\gamma,u)=
\sum_{\substack{\frac{Y_0-b}{W}<x\le \gamma\\ x\equiv u\pmod{q}}}\varpi(x),\end{align}
and define
\begin{align*}V_{\max}=\sup_{q\le Q}\sup_{\frac{Y_0-b}{W}<\gamma \le \frac{Y_0-b+Y}{W}}\sup_{1\le u\le q}|V(q,\gamma,u)|.\end{align*}
We deduce that
\begin{align*}U(x_2,\ldots,x_s;q,a,\gamma)=\sum_{1\le u\le q}e\big(\frac{a}{q}f(u,x_2,\ldots,x_s)\big)V(q,\gamma,u)\ll qV_{\max}.\end{align*}
Therefore, by \eqref{Salphavarpi2}, we have
\begin{align}\label{Salphavarpi3}
S(\frac{a}{q}+\beta;\boldsymbol{\varpi})\ll \frac{\delta^{s-1}Y^{s-1}Q}{\phi(W)^{s-1}}V_{\max}.\end{align}

Note that
\begin{align*}\int_{\mathfrak{M}(Q)}S(\alpha;\boldsymbol{\varpi})d\alpha=\sum_{q\le Q}\sum_{\substack{a=1\\ (a,q)=1}}^q\int_{|\beta|\le \frac{Q}{q(Y/W)^2}}
S(\frac{a}{q}+\beta;\boldsymbol{\varpi})d\beta,\end{align*}
and by \eqref{Salphavarpi3} we have
\begin{align*}\int_{\mathfrak{M}(Q)}S(\alpha;\boldsymbol{\varpi})d\alpha\ll \frac{\delta^{s-1}Y^{s-3}W^2Q^3}{\phi(W)^{s-1}}V_{\max} .\end{align*}
In view of \eqref{lowermajor}, we obtain
\begin{align*}V_{\max}\gg \frac{\delta Y}{\phi(W)Q^{3}},\end{align*}
and therefore, there exist $q\le Q$, $\frac{Y_0-b}{W}<\gamma \le \frac{Y_0-b+Y}{W}$, $1\le u_0\le q$ and $c_f>0$ such that
\begin{align}\label{existVlower}|V(q,\gamma,u_0)|\ge c_f \frac{\delta Y}{\phi(W)Q^{3}}.\end{align}

For $1\le u\le q$, we define
\begin{align}\label{defineVqgammau2}V'(q,\gamma,u)=
\sum_{\substack{\gamma<x\le \frac{Y_0-b+Y}{W}\\ x\equiv u\pmod{q}}}\varpi(x).\end{align}
We claim that either
\begin{align}\label{claim1}V(q,\gamma,u)\ge c_f \frac{\delta Y}{\phi(W)Q^{3}}\cdot \frac{1}{2q}\ \textrm{ for some }\ 1\le u\le q,\end{align}
or
\begin{align}\label{claim2}V'(q,\gamma,u)\ge c_f \frac{Y}{\phi(W)Q^{3}}\cdot \frac{1}{2q}\ \textrm{ for some }\ 1\le u\le q.\end{align}
Otherwise, we have
\begin{align*}V(q,\gamma,u)< c_f \frac{Y}{\phi(W)Q^{3}}\cdot \frac{1}{2q}\ \textrm{ and }\  V'(q,\gamma,u)<c_f \frac{Y}{\phi(W)Q^{3}}\cdot \frac{1}{2q}\end{align*}
for all $1\le u\le q$.
 Then by \eqref{existVlower}, one has \begin{align*}V(q,\gamma,u_0)\le -c_f \frac{\delta Y}{\phi(W)Q^{3}}.\end{align*}
Therefore, there are at most $2q-1$ positive ones among $V(q,\gamma,u)$ and $V'(q,\gamma,u)$, and we deduce that
 \begin{align*}\sum_{\substack{u \\ V(q,\gamma,u)\ge 0}}V(q,\gamma,u)+\sum_{\substack{u \\ V'(q,\gamma,u)\ge 0}}V'(q,\gamma,u)< ( 2q-1)c_f \frac{\delta Y}{\phi(W)Q^{3}}\cdot \frac{1}{2q},\end{align*}
 and furthermore,
\begin{align}\label{sumallless}\sum_{1\le u\le q}\Big(V(q,\gamma,u)+V'(q,\gamma,u)\Big)<&( 2q-1)c_f \frac{\delta Y}{\phi(W)Q^{3}}\cdot \frac{1}{2q}-
c_f \frac{\delta Y}{\phi(W)Q^{3}}\notag
\\= & -c_f \frac{\delta Y}{\phi(W)Q^{3}}\cdot \frac{1}{2q}.\end{align}
By \eqref{choosedelta}, \eqref{defineVqgammau} and \eqref{defineVqgammau2}, we have
\begin{align}\label{sumall}\sum_{u}\Big(V(q,\gamma,u)+V'(q,\gamma,u)\Big)=\sum_{x}\varpi(x)=0,\end{align}
which is a contradiction to \eqref{sumallless}. Therefore, we have either \eqref{claim1} or \eqref{claim2} (or both).

Now by \eqref{claim1} or \eqref{claim2}, we can find $1\le u\le  q\le Q$ and a subinterval $J\subseteq I$ such that
\begin{align*}
\sum_{\substack{Wx+b\in J\\ x\equiv u\pmod{q}}}\varpi(x)\ge c_f\frac{\delta Y}{\phi(W)Q^3q}.\end{align*}
Note that
\begin{align*}
\sum_{\substack{Wy+b\in J\\ y\equiv u\pmod{q}}}\varpi(y)=\sum_{\substack{x\equiv b+uW\pmod{Wq}\\ x\in \mathcal{A}\cap J}}\Lambda(x)-
\delta\sum_{\substack{x\equiv b+uW\pmod{Wq}\\ x\in  J}}\Lambda(x),\end{align*}
and on writing $c=b+uW$, we have
\begin{align}\label{obtainincease}
\sum_{\substack{x\equiv c\pmod{Wq}\\ x\in \mathcal{A}\cap J}}\Lambda(x)-
\delta\sum_{\substack{x\equiv c\pmod{Wq}\\ x\in  J}}\Lambda(x)\ge c_f\frac{\delta Y}{\phi(W)Q^3q}.\end{align}
By \eqref{obtainincease}, we have
\begin{align}\label{lowerJ}\sum_{\substack{x\equiv c\pmod{Wq}\\ x\in  J}}\Lambda(x)\ll \frac{|J|}{\phi(Wq)}.\end{align}
Then we conclude from \eqref{obtainincease} and \eqref{lowerJ} that
\begin{align*}(c,Wq)=1 \ \textrm{ and } \ |J|\gg \delta Y Q^{-3.1}.\end{align*}
By \eqref{lowerJ}, we further have
\begin{align}\label{finddensityinc}\sum_{\substack{x\equiv c\pmod{Wq}\\ x\in  J}}\Lambda(x)\ll \delta^{-1}Q^{3.1}\frac{\delta Y}{\phi(W)Q^3q}.\end{align}
Now we deduce from \eqref{obtainincease} and \eqref{finddensityinc} that
\begin{align*}
\sum_{\substack{x\equiv c\pmod{Wq}\\ x\in \mathcal{A}\cap J}}\Lambda(x)-
\delta\sum_{\substack{x\equiv c\pmod{Wq}\\ x\in  J}}\Lambda(x)\ge C_f\delta Q^{-3.1}\sum_{\substack{x\equiv c\pmod{Wq}\\ x\in  J}}\Lambda(x)\end{align*}
for some $C_f>0$. This completes the proof.\end{proof}

Let $\upsilon_0(f)$ denote the number of solutions to \eqref{quadequation}, where $1\le x_1,\ldots,x_s\le X$ and $x_i=x_j$ for some $1\le i<j\le s$. Subject to the condition $\rank(M)\ge 5$, it is well-known that
\begin{align}\label{ups}\upsilon_0(f)\ll X^{s-3+\varepsilon}.\end{align}

\begin{lemma}\label{lemmadensityincrease}Suppose that there are no distinct primes $p_1,\ldots,p_s\in \mathcal{A}\cap I$ such that $f(p_1,\ldots,p_s)=0$. Let $\delta=\delta_{b,W;I}(\mathcal{A})$. Suppose that $\delta\ge (\log X)^{-1}$.
 Then there exist $c\in \Z$, $r\in \N$, a subinterval $J\subseteq I$ and a positive number $\alpha_0=\alpha_0(f)$ such that
 \begin{align*}(c,Wr)=1,\ r\le \alpha_0^{-1}\delta^{-21}, \ \ |J|\ge \alpha_0\delta^{70}|I|\ \ \textrm{ and }\ \ \delta_{c,Wr;J}(\mathcal{A})\ge \delta(1+\alpha_0\delta^{70}).
 \end{align*}
\end{lemma}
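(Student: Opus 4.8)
The plan is to compare $\nu_f^\ast(\mathcal{A})$ with the ``expected count'' $\delta^s\nu_f(b,W;I)$, show that the difference has the true order of magnitude, and then feed this into the major-arc analysis of Lemma~\ref{lemmadensityincmajor} with a carefully chosen cut-off $Q$.

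First I would bound the two counts. Since $1_{\mathcal{A}'}\cdot\Lambda_{b,W;I}$ is supported on those $y$ for which $Wy+b$ is a prime lying in $\mathcal{A}\cap I$, every nonzero term of $\nu_f^\ast(\mathcal{A})$ comes from an $s$-tuple of primes in $\mathcal{A}\cap I$ on which $f$ vanishes; by hypothesis no such tuple has pairwise distinct entries, so it has a repeated entry, and by \eqref{ups} there are $\ll X^{s-3+\varepsilon}$ of these, each weighted by at most $(\log X)^s$, whence $\nu_f^\ast(\mathcal{A})\ll X^{s-3+\varepsilon}$. On the other hand Corollary~\ref{corollary13} (using the positive lower bound for $\mathfrak{S}_f(W)\mathfrak{I}_f$ proved there, the error term being of strictly smaller order) gives $\nu_f(b,W;I)\gg_f W^2Y^{s-2}\phi(W)^{-s}$; as $W\le\log X$ and $Y>X/\log X$ this is $\gg X^{s-2}(\log X)^{-O(s)}$, so for $X$ large $\delta^s\nu_f(b,W;I)$ dwarfs $\nu_f^\ast(\mathcal{A})$ (here $\delta\ge(\log X)^{-1}$ is used), giving
$$\big|\nu_f^\ast(\mathcal{A})-\delta^s\nu_f(b,W;I)\big|\gg_f \delta^sW^2Y^{s-2}\phi(W)^{-s}.$$
By the identity \eqref{representnu} and pigeonholing over the (at most $2^s$) tuples $\boldsymbol{\varpi}$ obeying \eqref{conditionvarpi}, there is one with $|\nu_f^\ast(\boldsymbol{\varpi})|\gg_f\delta^sW^2Y^{s-2}\phi(W)^{-s}$.

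Next I would split $\nu_f^\ast(\boldsymbol{\varpi})=\int_0^1 S(\alpha;\boldsymbol{\varpi})\,d\alpha$ over $\mathfrak{M}(Q)$ and $\mathfrak{m}(Q)$. Each coordinate of $\boldsymbol{\varpi}$ lies in $\{\varpi,\delta\Lambda_{b,W;I}\}$, and since $\delta\le 1$ one checks $|\varpi_j(x)|\le\Lambda_{b,W;I}(x)$, while $\sum_x|\varpi_j(x)|\ll\delta Y/\phi(W)$ by the prime number theorem in progressions (legitimate as $W\le\log X$); hence $S(\alpha;\boldsymbol{\varpi})$ falls within the scope of Proposition~\ref{proprestriction}, with $\delta$ inflated by an absolute constant. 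I take $Q=\lceil c_f\delta^{-21}\rceil$ for a suitably large $c_f=c_f(f)$ — note $Q\le(\log X)^{22}\le Q_0$ because $\delta\ge(\log X)^{-1}$. By \eqref{circle2} and Proposition~\ref{proprestriction} (whose proof in fact supplies the estimate for every $Q\le Q_0$),
$$\Big|\int_{\mathfrak{m}(Q)}S(\alpha;\boldsymbol{\varpi})\,d\alpha\Big|\ll_f \delta^{s-10}W^2Y^{s-2}\phi(W)^{-s}Q^{-\frac{10}{21}}\ll_f c_f^{-10/21}\,\delta^sW^2Y^{s-2}\phi(W)^{-s},$$
which for $c_f$ large enough is at most $\tfrac12|\nu_f^\ast(\boldsymbol{\varpi})|$. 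Therefore $\big|\int_{\mathfrak{M}(Q)}S(\alpha;\boldsymbol{\varpi})\,d\alpha\big|\ge\tfrac12|\nu_f^\ast(\boldsymbol{\varpi})|\gg_f\delta^sW^2Y^{s-2}\phi(W)^{-s}$, i.e.\ hypothesis \eqref{lowermajor} of Lemma~\ref{lemmadensityincmajor} holds (and $\delta\ge(\log X)^{-50}$, $Q\le(\log X)^{50}$ are clear).

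Finally, Lemma~\ref{lemmadensityincmajor} yields $c\in\Z$, $q\in\N$ and a subinterval $J\subseteq I$ with $(c,Wq)=1$, $q\le Q$, $|J|\ge C_f\delta Q^{-3.1}|I|$ and $\delta_{c,Wq;J}(\mathcal{A})\ge\delta(1+C_fQ^{-3.1})$. Setting $r=q$ and using $Q\asymp_f\delta^{-21}$ (so $Q^{-3.1}\asymp_f\delta^{65.1}$) we get $r\ll_f\delta^{-21}$, $|J|\gg_f\delta^{66.1}|I|$ and $\delta_{c,Wr;J}(\mathcal{A})\ge\delta(1+c_f'\delta^{65.1})$ for some $c_f'>0$; since $\delta\le 1$ both $\delta^{66.1}$ and $\delta^{65.1}$ exceed $\delta^{70}$, so a sufficiently small $\alpha_0=\alpha_0(f)>0$ delivers the conclusion. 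The one genuinely delicate feature is the bookkeeping of powers of $\delta$: the restriction estimate carries $\delta^{s-10}$ whereas the main term carries $\delta^s$, and balancing this mismatch forces $Q$ of size $\delta^{-21}$, which is why the resulting density increment is only polynomially small in $\delta$ — exactly the amount the iteration in the proof of Theorem~\ref{theorem1} can afford.
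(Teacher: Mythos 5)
Your proposal is correct and follows essentially the same route as the paper: bound $\nu_f^\ast(\mathcal{A})$ by \eqref{ups}, lower-bound $\nu_f(b,W;I)$ via Corollary \ref{corollary13}, pigeonhole in \eqref{representnu}, kill the minor arcs with Proposition \ref{proprestriction} at $Q\asymp_f\delta^{-21}$, and invoke Lemma \ref{lemmadensityincmajor}. Your extra care in checking that the $\varpi_j$ satisfy \eqref{definelambdai}--\eqref{assumptiononlambdai} and in converting $Q^{-3.1}$ into the stated powers of $\delta$ only makes explicit what the paper leaves implicit.
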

\begin{proof}By \eqref{ups}, we have
\begin{align*}\nu_f^\ast (\mathcal{A})\ll X^{s-3+\varepsilon}.\end{align*}
Corollary \ref{corollary13} yields
\begin{align*}\nu_f(b,W;I)\gg \frac{W^2Y^{s-2}}{\phi(W)^s}.\end{align*}
Then we have
\begin{align*}|\nu_f^\ast(\mathcal{A})-\delta^s\nu_f(b,W;I)|\gg \frac{\delta^sW^2Y^{s-2}}{\phi(W)^s}.\end{align*}
In view of \eqref{representnu}, one has
\begin{align}\label{nulower}|\nu_f^\ast(\boldsymbol{\varpi})|\gg \frac{\delta^s W^2 Y^{s-2}}{\phi(W)^s}\end{align}
for some $\boldsymbol{\varpi}$ satisfying \eqref{conditionvarpi}.
By \eqref{circle2}, \begin{align*}\nu_f^\ast(\boldsymbol{\varpi})
=\int_{\mathfrak{M}(Q)}S(\alpha;\boldsymbol{\varpi})d\alpha +\int_{\mathfrak{m}(Q)}S(\alpha;\boldsymbol{\varpi})d\alpha.\end{align*}
By Proposition \ref{proprestriction}, we have
\begin{align}\label{Salphavarpi4}\int_{\mathfrak{m}(Q)}S(\alpha;\boldsymbol{\varpi})d\alpha \ll \frac{\delta^{s-10} W^2Y^{s-2}}{\phi(W)^{s}}Q^{-10/21}.\end{align}
On choosing $Q=\delta^{-21}c_0(f)$ with some $c_0(f)>0$ sufficiently large, we obtain from \eqref{nulower} and \eqref{Salphavarpi4} that
\begin{align*}\Big|\int_{\mathfrak{M}(Q)}S(\alpha;\boldsymbol{\varpi})d\alpha\Big|\gg \frac{\delta^s W^2Y^{s-2}}{\phi(W)^s}.\end{align*}
This completes the proof on applying Lemma \ref{lemmadensityincmajor}.
\end{proof}

\noindent {\it Proof of Theorem \ref{theorem1}}.  Suppose that
\begin{align*}\frac{|\mathcal{A}|}{\pi(X)}\ge  3c_f(\log\log X)^{-\frac{1}{80}},\end{align*}
where $c_f$ is large in terms of $f$. By a standard dyadic argument, without loss of generality, we may assume $\mathcal{A}\subseteq (X/2,X]$ with
\begin{align}\label{density2}\frac{|\mathcal{A}|}{|\mathcal{P}\cap (X/2,X]|}\ge 2c_f(\log\log X)^{-\frac{1}{80}}.\end{align}

Let $I_1=(X/2,X]$. Let $b_1=0$ and $W_1=1$. Let
\begin{align}\label{valuedelta1}\delta_1=c_f(\log\log X)^{-\frac{1}{80}}.\end{align}
We deduce that
\begin{align*}\sum_{x\in \mathcal{A}\cap I_1}\Lambda( x)-\delta_1\sum_{x\in I_1}\Lambda(x)=\Big(\sum_{p\in \mathcal{A}} 1-\delta_1\sum_{p\in I_1}1\Big)\log X+O(\frac{X}{\log X}),\end{align*}
and by \eqref{density2},
\begin{align*}\sum_{x\in \mathcal{A}\cap I_1}\Lambda( x)-\delta_1\sum_{x\in I_1}\Lambda(x)\ge c_fX(\log\log X)^{-\frac{1}{80}}+O(\frac{X}{\log X}).\end{align*}
Therefore, we conclude
$$\delta_{b_1,W_1;I_1}(\mathcal{A})\ge \delta_1.$$
On applying Lemma \ref{lemmadensityincrease} iteratively, we can find $b_m,W_m,I_m,\delta_m$ ($m=1,2,\cdots $) satisfying
\begin{align}\label{iterative}W_{m+1}\le \alpha_0^{-1}\delta_m^{-21}W_m, \ \ |I_{m+1}|\ge \alpha_0\delta_m^{70}|I_m|,\ \ \ \delta_{m+1}\ge \delta_m(1+\alpha_0\delta_m^{70})\end{align}
and
$$(b_m,W_m)=1, \ \ \delta_{b_m,W_m;I_m}(\mathcal{A})\ge \delta_m$$
provided that in this process
\begin{align}\label{conditioniterative}W_m\le \log X\ \ \textrm{ and } \ \ |I_m|\ge \frac{X}{\log X}.\end{align}
The number $\alpha_0$ is the one in Lemma \ref{lemmadensityincrease}. We may assume $0<\alpha_0\le 1$ with $\log (\alpha_0^{-1})\le \alpha_0^{-0.1}$.

We deduce from \eqref{iterative} that
$$W_{m+1}\le \alpha_0^{-1}\delta_1^{-21}W_m, \ \ |I_{m+1}|\ge \alpha_0\delta_1^{70}|I_m|,$$
and
$$W_{m+1}\le \alpha_0^{-m}\delta_1^{-21m}W_1, \ \ |I_{m+1}|\ge \alpha_0^{m}\delta_1^{70m}|I_1|.$$
In view of the value of $\delta_1$ in \eqref{valuedelta1}, one has
\begin{align}\label{inequalitylast}\Big(\alpha_0^{-1}\delta_1^{-70}\Big)^{4\alpha_0^{-1}\delta_1^{-71}}\le (\log X)^{1/2}.\end{align}
Therefore, the inequalities in \eqref{conditioniterative} hold if $m\le \frac{4}{\alpha_0}\delta_1^{-71}$.

By \eqref{iterative},
$$\delta_{m+1}\ge \delta_{1}(1+\alpha_0\delta_1^{70})^{m}.$$
Then we observe $\delta_{m+1}>1$ if $m>\frac{2}{\alpha_0}\delta_1^{-71}$. This is a contradiction and the proof of Theorem \ref{theorem1} is complete.

\vskip9mm

\vskip4mm
\end{document}